\newtheorem{theorem}{Theorem}[section]
\newtheorem{lemma}[theorem]{Lemma}
\newtheorem{corollary}[theorem]{Corollary}
\newtheorem{proposition}[theorem]{Proposition}
\theoremstyle{remark}
\newtheorem{remark}[theorem]{Remark}
\newtheorem{example}[theorem]{Example}
\newtheorem{question}[theorem]{Question}
\numberwithin{equation}{section}
\newcommand{\Cstar}{\mathrm{C}^*}
\newcommand{\N}{\mathbb{N}}
\newcommand{\C}{\mathbb{C}}
\newcommand{\Id}{\mathrm{Id}}
\DeclareMathOperator{\Lin}{span}
\title{On the Lie ideals of $\Cstar$-algebras}
\author{Leonel Robert}
\date{}
\begin{document}
\maketitle


\begin{abstract} 
	Various questions on Lie ideals of $\Cstar$-algebras are investigated. They fall  roughly under the  following topics:  relation of Lie ideals to closed two-sided ideals;  
	Lie ideals spanned by special classes of elements such as commutators, nilpotents, and the range of polynomials; characterization of Lie ideals as similarity invariant subspaces.
	\end{abstract}
	
\emph{Keywords}: 	
	Lie ideals, $\Cstar$-algebras, commutators, nilpotents, polynomials, similarity invariant subspaces.

\section*{Introduction}
This paper deals with Lie ideals in $\Cstar$-algebras.
Like other investigations on this topic (\cites{miers, BKS}), we use, and take inspiration from,  Herstein's work  on the  Lie ideals of   semiprime rings. The abundance of semiprime ideals in a $\Cstar$-algebra -- e.g., the norm-closed ideals -- plus a number of  $\Cstar$-algebra techniques -- approximate units, polar decompositions, functional calculus -- make it possible to further develop the results of the purely algebraic setting in the $\Cstar$-algebraic setting.  

The contributions in the present paper, though  varied,  revolve around the following themes:  the commutator equivalence
of Lie ideals to two-sided ideals; the study of Lie ideals generated by special elements such as nilpotents and projections  and by the range of polynomials; the characterization of Lie ideals as subspaces  invariant by  similarities.
These topics  have been studied before,  and  this paper is a direct beneficiary of works such as \cite{marcoux09}, \cite{BKS}, and \cite{bresar-klep1}.

A  selection of results in this paper follows:
Let $A$ be a $\Cstar$-algebra. We show below that the following are true:
\begin{enumerate}[(i)]
	\item
	The closed two-sided ideal generated by the commutators of $A$ is also  the $\Cstar$-algebra generated by the commutators of $A$. (Theorem \ref{commutator})
	\item
	The closure of the linear span of the square zero elements agrees with the closure of the linear span of the commutators. If $A$ is unital and
	without 1-dimensional representations, then the linear span of the square zero elements agrees with  the linear span of the commutators. (Corollary \ref{linN2} and Theorem \ref{linN2comm}.)
	
	\item
	If $A$ is unital and has no bounded traces and $f$ is a nonconstant polynomial in noncommuting variables with coefficients in $\C$, then there exists $N$ such that every element	of $A$ is a linear combination of at most $N$ values of $f$ on $A$. If  $f(\C)=\{0\}$ (e.g., $f(x,y)=[x,y]$), then there exist $\Cstar$-algebras where the least such $N$ can be arbitrarily large.
	(Corollary \ref{polypop} and Example \ref{counterpop}.)
	
	\item
	If $A$ is unital and either simple, or without bounded traces, or a von Neumann algebra, then a subspace  $U$ of $A$ is a Lie ideal of $A$ if and only if 
	$(1+x)U(1-x)\subseteq U$ for all  square zero  elements $x$ in $A$. (Corollaries \ref{simneu} and \ref{simpleamitsur}.)
\end{enumerate}	

\section{From pure algebra to  $\Cstar$-algebras}
Let us fix some notation: 
\begin{center}
	\emph{Throughout the paper $A$ denotes a $\Cstar$-algebra.} 
\end{center}
Let $x$ and $y$ be elements in $A$. Then $[x,y]$ denotes the element $xy-yx$ (the commutator of $x$ and $y$). Let $X$ and $Y$ be subsets of $A$.  Then   $X+Y$,
$XY$, and $[X,Y]$ denote the linear spans of the elements of the form $x+y$, $xy$, and $[x,y]$, with $x\in X$ and $y\in Y$, respectively. The  linear span of $X$ is denoted by $\Lin(X)$. The $\Cstar$-algebra  and the closed two-sided ideal generated by $X$  are denoted by $\Cstar(X)$ and  $\Id(X)$, respectively.    (For the 2-sided ideal algebraically generated by $X$  we simply write  $A XA$.)
From the identity $[xy,a]=[x,ya]+[y,ax]$, used inductively, we deduce that \begin{equation}\label{linearizing}
[X^n,A]\subseteq [X,A]
\end{equation}
for any set $X\subseteq A$ and all $n\in \N$. We sometimes  refer to this fact as the ``linearizing property of $[\cdot,A]$".

A subspace $L$ of $A$ is called a Lie ideal if  it satisfies that $[L,A]\subseteq L$. 
We will make frequent  use of the following elementary  lemma:
\begin{lemma}\label{elementary}
	Let $L$ be a Lie ideal of $A$.  Then
	$A[L,L]A\subseteq L+L^2$.
\end{lemma}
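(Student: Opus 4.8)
The plan is to show that each generator $a[x,y]b$ of $A[L,L]A$ (with $a,b\in A$ and $x,y\in L$) already lies in $L+L^2$; since $L+L^2$ is a subspace, that is all one needs. Two facts are used constantly: because $L$ is a Lie ideal, any commutator with an entry in $L$ lies in $L$ (so $[a,x]\in L$, $[a,[x,y]]\in L$, and the like), and a product of two elements of $L$ lies in $L^2$ by definition. As a warm-up one gets the one-sided inclusion $A[L,L]\subseteq L+L^2$ for free: for $c\in A$, the Leibniz rule $c[x,y]=[cx,y]+[y,c]x$ exhibits $c[x,y]$ as the sum of $[cx,y]\in L$ and $[y,c]x\in L^2$.

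For the two-sided statement I would begin from the decomposition $a[x,y]b=ab[x,y]+a[[x,y],b]$, which is just $[x,y]b=b[x,y]+[[x,y],b]$ multiplied on the left by $a$. The first term lies in $L+L^2$ by the warm-up, so everything reduces to placing $a[[x,y],b]$ in $L+L^2$. The naive attempt --- peel $a$ off the commutator $[[x,y],b]\in L$ --- stalls, because it leaves a term in which an element of $[L,A]$ is multiplied on the right by $a$, and there is no reason for such a product to return to $L+L^2$ (already $L\cdot A\not\subseteq L+L^2$ in general). The way out is to apply the Jacobi identity first, $[[x,y],b]=[x,[y,b]]-[y,[x,b]]$, which trades the single commutator for iterated ones whose \emph{inner} brackets $[y,b]$ and $[x,b]$ again lie in $L$; it thus suffices to handle $a[x,w]$ with $w\in L$. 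For that the Leibniz rule does work: $a[x,w]=[ax,w]+[w,a]x$, where $[ax,w]\in L$ since $w\in L$, and --- the crucial point --- $[w,a]x\in L^2$, because $[w,a]\in L$ and the leftover factor $x$ is itself in $L$. Taking $w=[y,b]$ and $w=[x,b]$ then gives $a[[x,y],b]\in L+L^2$, and the lemma follows.

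The only real obstacle is that nearly every naive move --- sliding $a$ or $b$ across $[x,y]$, expanding $[x,y]b$, symmetrizing in $x$ and $y$ --- is a reversible algebraic identity and so merely goes in a circle. The one substantive step is the Jacobi descent from $[[x,y],b]$ to $[x,[y,b]]$ and $[y,[x,b]]$: only afterwards does the residue produced by peeling off $a$ acquire a companion ($x$, resp.\ $y$) still lying in $L$, which is exactly what forces that residue into $L^2$ rather than into an uncontrolled product with $A$. The argument is purely algebraic, which matches the lemma's billing as elementary.
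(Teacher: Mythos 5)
Your proof is correct and follows essentially the same route as the paper's: both arguments first reduce $A[L,L]A$ to $A[L,L]$ via the Jacobi identity (you do it element-by-element, expanding $[[x,y],b]=[x,[y,b]]-[y,[x,b]]$, while the paper does it at the set level via $[[L,L],A]\subseteq[L,L]$), and then peel off the left factor with the Leibniz identity $a[l_1,l_2]=[al_1,l_2]-[a,l_2]l_1$.
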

\begin{proof}
	We have $[[L,L],A]\subseteq [L,L]$, by Jacobi's identity. Thus,
	\begin{align*}
	[L,L]A &\subseteq A[L,L]+[[L,L],A]\\
	&\subseteq A[L,L]+[L,L],
	\end{align*}
	Multiplying by $A$ on the left we get   $A[L,L]A\subseteq A[L,L]$. Finally, from the identity $a[l_1,l_2]=[al_1,l_2]-[a,l_2]l_1$
	we deduce that $A[L,L]\subseteq L+L^2$, as desired.
\end{proof}

The following theorem of Herstein is the basis of many of our arguments in this section (it holds for semiprime rings without 2-torsion): 
\begin{theorem}[\cite{herstein2}*{Theorem 1}]
	Let $L$ be a Lie ideal of $A$. Then
	$[t,[t,L]]=0$ implies $[t,L]=0$ for all $t\in A$.
\end{theorem}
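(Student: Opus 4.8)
The plan is to study the inner derivation $\delta = [t,\cdot]$ of $A$. The hypothesis $[t,[t,L]]=0$ says precisely that $\delta^2$ vanishes on $L$, while $L$ being a Lie ideal gives $\delta(L)=[t,L]\subseteq L$; thus $\delta$ restricts to a square-zero linear map on $L$, and the goal is to show this restriction is zero. This is the Lie-ideal analogue of the classical fact that a square-zero derivation of a $2$-torsion-free semiprime ring is zero, and I would follow that proof as a template, with $L$ playing the role of the ambient ring. As a preliminary I would reduce to the case $A$ prime: a $\Cstar$-algebra is semiprime with $\bigcap_\pi \ker\pi = 0$ over its primitive quotients $\pi\colon A\to\pi(A)$, each $\pi(L)$ is a Lie ideal with $\pi\circ\delta=\mathrm{ad}(\pi(t))\circ\pi$ and $\mathrm{ad}(\pi(t))^2$ vanishing on $\pi(L)$, so the prime case gives $\delta(a)\in\bigcap_\pi\ker\pi=0$. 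Assume then that $A$ is prime; we may also assume $t\notin Z(A)$, as otherwise $\delta=0$.

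Next I would extract the consequences of $\delta^2|_L=0$ through the Leibniz identity $\delta^2(uv)=\delta^2(u)v+2\delta(u)\delta(v)+u\delta^2(v)$. For $a,b\in L$ one has $[a,b]\in L$, so $0=\delta^2[a,b]=2[\delta(a),\delta(b)]$ and hence (no $2$-torsion) $[\delta(a),\delta(b)]=0$. For $a\in L$ and $r\in A$ the elements $a[a,r]=[a,ar]$ and $[a,r]a=[a,ra]$ again lie in $L$, so $\delta^2$ of each is $0$, which after expanding yields
\[
\delta(a)\,\delta([a,r]) = 0 = \delta([a,r])\,\delta(a)\qquad(a\in L,\ r\in A).
\]
Also $0=\delta^2[a,r]=2[\delta(a),\delta(r)]+[a,\delta^2(r)]$, so $[a,\delta^2(r)]=-2[\delta(a),\delta(r)]$. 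Fixing $a\in L$ and writing $c=\delta(a)=[t,a]\in[A,L]\subseteq L$ (with $\delta(c)=\delta^2(a)=0$), and expanding $\delta([a,r])=[c,r]+[a,\delta(r)]$, the displayed identities become $c[c,r]=-c[a,\delta(r)]$ and $[c,r]c=-[a,\delta(r)]c$.

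From here I would argue by cases. If $c\in Z(A)$ and $c\neq0$ then $c$ is a nonzero central element of a prime ring, hence a non-zero-divisor; since $[c,r]=0$ the identity gives $c[a,\delta(r)]=0$, so $[a,[t,r]]=0$ for all $r$, and replacing $r$ by $ra$ and using centrality of $c$ gives $[a,r]c=0$, hence $[a,r]=0$, hence $a\in Z(A)$ and $c=[t,a]=0$ — a contradiction, so $c=0$. If $c\notin Z(A)$, this is the substantial case: one keeps feeding carefully chosen elements ($r\mapsto ar,\ ra,\ rt,\ r[a,r']$, with iteration) into $c[c,r]=-c[a,\delta(r)]$, its companion, and $[a,\delta^2(r)]=-2[\delta(a),\delta(r)]$, cancelling factors of $2$ along the way, until the ``remainder'' terms collapse and one is left with a genuine annihilator identity such as $c\,A\,c=0$; then primeness forces $c=0$, contradicting $c\notin Z(A)$. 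Hence $\delta(a)=0$ for every $a\in L$, i.e.\ $[t,L]=0$.

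The step I expect to be the real obstacle is the non-central case: choreographing the substitutions into the commutator identities so that the uncontrolled terms cancel and what survives is a two-sided annihilator condition on $c=[t,a]$ — in essence reproducing Herstein's computation — while using nothing beyond semiprimeness and $2$-torsion-freeness, both of which are automatic for a $\Cstar$-algebra. By contrast the reduction to prime rings and the final appeal to semiprimeness are routine.
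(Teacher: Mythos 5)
The paper does not supply a proof of this statement at all: it is imported verbatim from Herstein (\cite{herstein2}*{Theorem 1}), so there is no internal argument of the paper to compare against. Evaluating your sketch on its own, the preliminary parts are sound: the reduction to prime quotients is correct (primitive $\Cstar$-quotients are prime and $2$-torsion-free, $\pi(L)$ is a Lie ideal of $\pi(A)$, and $\bigcap_\pi\ker\pi=0$ transfers the conclusion back); the Leibniz expansion of $\delta^2$ on $[a,b]$, $a[a,r]$, and $[a,r]a$ does give $[\delta(a),\delta(b)]=0$ and $\delta(a)\,\delta([a,r])=0=\delta([a,r])\,\delta(a)$; and your treatment of the central case $c=\delta(a)\in Z(A)\setminus\{0\}$ is a complete argument.

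The genuine gap is the non-central case, which is the entire content of Herstein's theorem. You do not exhibit the substitutions, verify that the uncontrolled terms cancel, or show that the annihilator identity you land on is one to which primeness applies; the passage reads ``one keeps feeding carefully chosen elements \dots\ until the remainder terms collapse and one is left with $cAc=0$,'' which is a statement of intent rather than a proof, and you yourself flag it as ``the real obstacle.'' The identities you do have get you partway -- for instance, from $c\,\delta([a,ra])=0$ together with $c\,\delta([a,r])=0$ one extracts $c[a,r]c=0$ for all $r\in A$, and substituting $r\mapsto r c s$ then yields $cA[a,c]Ac=0$, whence $[a,c]=0$ in the prime case -- but getting from there to $c=0$ still requires a further, nontrivial chain of manipulations, and that chain is precisely what the proposal omits. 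Until it is supplied, this is not a proof of the theorem.
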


Combining   Herstein's theorem and Lemma  \ref{elementary} we get the following theorem:

\begin{theorem}\label{commutator}
	The closed two-sided ideal generated by $[A,A]$ agrees with the $\Cstar$-algebra generated by $[A,A]$. In fact, 
	$\Id([A,A])=\overline{[A,A]+[A,A]^2}$.
\end{theorem}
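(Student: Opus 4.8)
The plan is to squeeze $\Id([A,A])$ between $\overline{[A,A]+[A,A]^2}$ and itself. One inclusion is immediate: $[A,A]\subseteq\Id([A,A])$, and since $\Id([A,A])$ is a closed subalgebra it also contains $[A,A]^2$, so $\overline{[A,A]+[A,A]^2}\subseteq\Id([A,A])$.

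For the reverse inclusion, introduce $K:=\overline{A[[A,A],[A,A]]A}$. Since $A\cdot A\subseteq A$, the set $A[[A,A],[A,A]]A$ is a two-sided ideal of $A$, so $K$ is a closed two-sided ideal. Because $[A,A]$ is a Lie ideal of $A$ (Jacobi's identity), Lemma~\ref{elementary} applied with $L=[A,A]$ gives $A[[A,A],[A,A]]A\subseteq[A,A]+[A,A]^2$, hence $K\subseteq\overline{[A,A]+[A,A]^2}$. Moreover $[[A,A],[A,A]]\subseteq K$: for $z$ in this set, $z=\lim_\lambda e_\lambda z e_\lambda$ for an approximate unit $(e_\lambda)$ of $A$, and each $e_\lambda z e_\lambda\in A[[A,A],[A,A]]A$. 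So once we show $[A,A]\subseteq K$, the ideal $K$ is a closed two-sided ideal containing $[A,A]$, forcing $\Id([A,A])\subseteq K\subseteq\overline{[A,A]+[A,A]^2}\subseteq\Id([A,A])$, and the displayed equality follows.

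To prove $[A,A]\subseteq K$, pass to the quotient $\Cstar$-algebra $B:=A/K$ with quotient map $\pi$. Since $\pi$ is a surjective homomorphism, $[B,B]=\pi([A,A])$, and as $[[A,A],[A,A]]\subseteq K=\ker\pi$ we get $[[B,B],[B,B]]=\pi([[A,A],[A,A]])=0$; thus it suffices to show that any $\Cstar$-algebra $B$ with $[[B,B],[B,B]]=0$ is commutative. This is where Herstein's theorem enters, applied twice. Set $L:=[B,B]$, a Lie ideal of $B$ (Jacobi) with $[L,L]=0$. For $\ell\in L$ and $x\in B$ we have $[\ell,x]\in L$, hence $[\ell,[\ell,x]]\in[L,L]=0$; thus $[\ell,[\ell,B]]=0$, and Herstein's theorem yields $[\ell,B]=0$, so $L$ lies in the center of $B$. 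Consequently, for $a,x\in B$ the commutator $[a,x]$ is central and therefore commutes with $a$, giving $[a,[a,x]]=0$; so $[a,[a,B]]=0$, and a second application of Herstein's theorem gives $[a,B]=0$. Hence $B$ is commutative, that is, $[A,A]\subseteq K$, which completes the proof of $\Id([A,A])=\overline{[A,A]+[A,A]^2}$.

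The identification of $\Id([A,A])$ with $\Cstar([A,A])$ is then formal: $\Cstar([A,A])$ contains $[A,A]$, hence $[A,A]^2$, hence $\overline{[A,A]+[A,A]^2}=\Id([A,A])$; and conversely $\Id([A,A])$ is a closed two-sided ideal of a $\Cstar$-algebra, hence a $\Cstar$-subalgebra, and it contains $[A,A]$, whence $\Cstar([A,A])\subseteq\Id([A,A])$. I expect the one genuinely substantial point to be the reduction to the implication ``$[[B,B],[B,B]]=0\Rightarrow B$ is commutative'' together with its derivation from Herstein's theorem; everything else is bookkeeping with closures, approximate units, and the smallest closed two-sided ideal containing a subset.
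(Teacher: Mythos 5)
Your proof is correct and follows essentially the same route as the paper's: introduce the closed ideal generated by $[[A,A],[A,A]]$, use Lemma~\ref{elementary} to bound it by $\overline{[A,A]+[A,A]^2}$, and apply Herstein's theorem twice in the quotient to show the quotient is commutative, hence $[A,A]$ lies in that ideal. The only difference is presentational: you spell out a few points the paper leaves implicit (e.g.\ the approximate-unit argument for why $[[A,A],[A,A]]\subseteq K$, and phrasing the quotient step as ``$[[B,B],[B,B]]=0\Rightarrow B$ commutative'').
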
	
\begin{proof}
	Let $I=\Id([ [A,A] , [A,A] ])$. Then  $[[x,y],[[x,y],A/I]]=0$ for all $x,y\in A/I$. Herstein's theorem implies that   $[[x,y],A/I]=0$ for all $x,y\in A/I$. That is, $[[A/I,A/I],A/I]=0$.
	Herstein's theorem again implies that $[A/I,A/I]=0$; i.e., $[A,A]\subseteq I$. 
	On the other hand, $I\subseteq \overline{[A,A]+[A,A]^2}$, by  Lemma \ref{elementary}. So, 
	\[
	\Id([A,A])\subseteq I\subseteq \overline{[A,A]+[A,A]^2}\subseteq \Cstar([A,A]).
	\]   
	Since $\Cstar([A,A])\subseteq \Id([A,A])$, these inclusions must be equalities.
\end{proof}

The following lemma   is easily derived  from the existence of approximately central approximate units for the
closed two-sided ideals of $A$:
\begin{lemma}[\cite{miers}*{Lemma 1}, \cite{BKS}*{Proposition 5.25}]\label{bunce}
	Let $I$  be a  closed two-sided ideal of $A$. Then
	\[
	\overline{[I,I]}=\overline{[I,A]}=I\cap \overline{[A,A]}.
	\]
\end{lemma}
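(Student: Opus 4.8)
The plan is to reduce the whole statement to a single nontrivial inclusion and then prove that one with an approximately central approximate unit. First I would record the easy containments. Since $I$ is a two-sided ideal, $[I,A]\subseteq I$; and trivially $[I,I]\subseteq[I,A]\subseteq[A,A]$. Taking closures,
\[
\overline{[I,I]}\subseteq\overline{[I,A]}\subseteq I\cap\overline{[A,A]}.
\]
Consequently, once I show $I\cap\overline{[A,A]}\subseteq\overline{[I,I]}$, these three sets collapse to one and the lemma is proved.

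For that reverse inclusion I would invoke an approximate unit $(e_\lambda)_\lambda$ of $I$ that is \emph{quasicentral for $A$}, i.e.\ $\|ae_\lambda-e_\lambda a\|\to0$ for every $a\in A$ (a standard fact, due to Arveson); this is the one external input, and it is precisely the ``approximately central approximate units'' alluded to in the statement. Fix $z\in I\cap\overline{[A,A]}$ and $\varepsilon>0$. Since $z\in\overline{[A,A]}$, I first choose a \emph{single} element $c=\sum_{j=1}^{m}[a_j,b_j]\in[A,A]$ with $\|z-c\|<\varepsilon$; the point of fixing $c$ now is that only the finitely many pairs $(a_j,b_j)$ will enter the estimate below. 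The crux is the ``commutator with error'' identity, valid for all $a,b\in A$:
\[
e_\lambda[a,b]e_\lambda-[e_\lambda a,\;be_\lambda]=(be_\lambda-e_\lambda b)\,a\,e_\lambda+b\,e_\lambda\,(e_\lambda a-ae_\lambda),
\]
whose right-hand side has norm at most $\|be_\lambda-e_\lambda b\|\,\|a\|+\|b\|\,\|e_\lambda a-ae_\lambda\|\to0$. Since $e_\lambda a\in I$ and $be_\lambda\in I$, the element $d_\lambda:=\sum_{j=1}^{m}[e_\lambda a_j,\;b_je_\lambda]$ lies in $[I,I]$. Now I combine three approximations: $e_\lambda ze_\lambda\to z$ (as $z\in I$); $\|e_\lambda ze_\lambda-e_\lambda ce_\lambda\|\le\|z-c\|<\varepsilon$ (as $\|e_\lambda\|\le1$); and $\|e_\lambda ce_\lambda-d_\lambda\|\to0$ by the displayed identity applied to each pair $(a_j,b_j)$. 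For $\lambda$ large this yields $\|z-d_\lambda\|<3\varepsilon$ with $d_\lambda\in[I,I]$, whence $z\in\overline{[I,I]}$, as needed.

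It is worth recording the even shorter fact that $[I,A]\subseteq\overline{[I,I]}$ \emph{without} quasicentrality: for $x\in I$ and $a\in A$, the elements $[x,e_\lambda a]=xe_\lambda a-e_\lambda ax$ lie in $[I,I]$ (since $e_\lambda a\in I$) and converge to $xa-ax=[x,a]$, using $xe_\lambda\to x$ and $e_\lambda(ax)\to ax$ (here $ax\in I$); this already gives $\overline{[I,A]}=\overline{[I,I]}$. The main obstacle is not any single computation but the bookkeeping: correctly ordering the quantifiers (choose $c$, depending only on $z$ and $\varepsilon$, before passing to the limit in $\lambda$, so that only finitely many commutators are perturbed), and of course having the construction of quasicentral approximate units available; the commutator algebra in the two displayed identities is routine.
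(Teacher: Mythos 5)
Your proof is correct and is exactly the argument the paper is alluding to: the paper gives no proof of its own, merely noting that the lemma "is easily derived from the existence of approximately central approximate units" and citing Miers and Bre\v{s}ar--Kissin--Shulman. Your reduction to the single inclusion $I\cap\overline{[A,A]}\subseteq\overline{[I,I]}$, the use of a quasicentral approximate unit $(e_\lambda)$ for $I$, the identity comparing $e_\lambda[a,b]e_\lambda$ with $[e_\lambda a,be_\lambda]$, and the three-term approximation $\|z-d_\lambda\|<3\varepsilon$ are all sound, as is your side remark that $\overline{[I,A]}=\overline{[I,I]}$ already follows from an ordinary (not quasicentral) approximate unit.
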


Bre\v{s}ar, Kissin, and Shulman show in \cite[Theorem 5.27]{BKS} that $\overline{[L,A]}=\overline{[\Id([L,A]),A]}$ for any Lie ideal $L$ of $A$. In the theorem below  we give a short proof of this important theorem:
\begin{theorem}\label{bresar}
	Let $L$ be  a Lie ideal of $A$. Then 
	\begin{enumerate}[(i)]
		\item
		$\Id([L,A])=\overline{[L,A]+[L,A]^2}$.
		\item
		$\overline{[\Id([L,A]),A]}=\overline{[L,A]}=\overline{[[L,A],A]}$.
	\end{enumerate}	
\end{theorem}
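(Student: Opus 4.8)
The plan is to establish (i) first and then read off (ii) from it, Lemma~\ref{bunce}, and the linearizing property \eqref{linearizing}. The single observation used throughout is that $M:=[L,A]$ is again a Lie ideal: since $L$ is a Lie ideal we have $[L,A]\subseteq L$, and therefore $[[L,A],A]\subseteq[L,A]$.

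For (i), only the inclusion $\Id([L,A])\subseteq\overline{[L,A]+[L,A]^2}$ requires work. I would set $I:=\Id([[L,A],[L,A]])$ and prove that $[L,A]\subseteq I$. Granting this, $\Id([L,A])=I$ (the reverse inclusion $I\subseteq\Id([L,A])$ being clear, as $[L,A]\subseteq\Id([L,A])$ is a subalgebra), and applying Lemma~\ref{elementary} to the Lie ideal $M$ gives $A[M,M]A\subseteq M+M^2$, so that $I=\Id([M,M])=\overline{A[M,M]A}\subseteq\overline{[L,A]+[L,A]^2}$; with the obvious reverse inclusion this yields (i). To show $[L,A]\subseteq I$, I would pass to the quotient $A/I$ with quotient map $\pi$. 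There $[\pi(M),\pi(M)]=0$ and $\pi(M)$ is a Lie ideal of $A/I$, so for every $t\in\pi(M)$ we get $[t,[t,A/I]]\subseteq[\pi(M),\pi(M)]=0$, and Herstein's theorem forces $[t,A/I]=0$; hence $\pi(M)\subseteq Z(A/I)$. Since $\pi(M)=[\pi(L),A/I]$, this means that for each $s\in\pi(L)$ the element $[s,A/I]$ lies in $Z(A/I)$, so $[s,[s,A/I]]\subseteq[s,Z(A/I)]=0$; Herstein's theorem applies a second time and gives $[s,A/I]=0$, i.e.\ $\pi(L)\subseteq Z(A/I)$. Then $\pi([L,A])=[\pi(L),A/I]=0$, which is $[L,A]\subseteq I$.

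For (ii), I would write $J:=\Id([L,A])$ and combine three inclusions. First, Lemma~\ref{bunce} gives $\overline{[J,A]}=J\cap\overline{[A,A]}$, and since $[L,A]\subseteq J$ and $[L,A]\subseteq[A,A]$ this yields $\overline{[L,A]}\subseteq\overline{[J,A]}$. Second, by (i) we have $J=\overline{[L,A]+[L,A]^2}$, so using the norm-continuity of $x\mapsto[x,a]$, the linearizing property \eqref{linearizing} (with $X=[L,A]$ and $n=2$), and $[[L,A],A]\subseteq[L,A]$, one gets $\overline{[J,A]}\subseteq\overline{[[L,A],A]}\subseteq\overline{[L,A]}$. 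Chaining these,
\[
\overline{[L,A]}\subseteq\overline{[J,A]}\subseteq\overline{[[L,A],A]}\subseteq\overline{[L,A]},
\]
so equality holds throughout, which is precisely the content of (ii).

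The crux is the claim $[L,A]\subseteq\Id([[L,A],[L,A]])$ in part (i); everything else is bookkeeping with \eqref{linearizing}, Lemma~\ref{elementary}, and Lemma~\ref{bunce}. The subtle point there is that a single application of Herstein's theorem only shows that $[L,A]$ is \emph{central} modulo $I$, not that it vanishes; the upgrade to zero comes from the second application, made to the image of $L$ (not of $[L,A]$) and exploiting that central elements commute with everything, so that the iterated commutator $[s,[s,A/I]]$ is automatically zero for $s\in\pi(L)$.
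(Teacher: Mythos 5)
Your proof is correct and follows essentially the same route as the paper: set $M=[L,A]$, pass to $A/I$ with $I=\Id([M,M])$, apply Herstein's theorem twice (first to $\pi(M)$, then to $\pi(L)$ using that $[\pi(L),A/I]$ is now central) to get $[L,A]\subseteq I$, then use Lemma~\ref{elementary} for (i), and combine (i), the linearizing property \eqref{linearizing}, and Lemma~\ref{bunce} for (ii). Your exposition of the two Herstein applications is a little more explicit than the paper's, but the argument is the same.
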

\begin{proof}
	(i) We follow a line of argument similar to the proof of  Theorem \ref{commutator}. Let $M=[L,A]$ and $I=\Id([M,M])$. Let $\widetilde L$ and $\widetilde M$ denote the images of $L$ and $M$ in $A/I$ by the quotient map. 
	Then  $[\widetilde M,[\widetilde M,A/I]]=0$. By Herstein's theorem, $[\widetilde M,A/I]=0$; i.e., $[[\widetilde L,A/I],A/I]$. By Herstein's theorem again,   $[\widetilde L,A/I]=0$; i.e., $[L,A]\subseteq I$. On the other hand, $I\subseteq \overline{M+M^2}=\overline{[L,A]+[L,A]^2}$, by Lemma \ref{elementary}. So, 
	\[
	\Id([L,A])\subseteq I\subseteq \overline{[L,A]+[L,A]^2}\subseteq \Cstar([L,A]).
	\]	
	Since $\Cstar([L,A])\subseteq \Id([L,A])$, all these inclusions must be equalities.
	
	(ii) By (i) and the linearizing property of $[\cdot,A]$ recalled in \eqref{linearizing}, we have that
	\[
	[\Id([L,A]),A]=[\overline{[L,A]+[L,A]^2},A]\subseteq \overline{[[L,A],A]}.
	\]
	Thus,
	$\overline{[\Id([L,A]),A]}\subseteq \overline{[[L,A],A]}\subseteq\overline{[L,A]}$. 
	On the other hand, 
	\[
	[L,A]\subseteq \Id([L,A])\cap [A,A] \subseteq \overline{[\Id([L,A]),A]},
	\]
	(the second inclusion  by Lemma \ref{bunce}). This completes the proof.
\end{proof}	

\begin{lemma}\label{basiclemma}
	Let $L$ be a  closed Lie ideal of $A$ such that  $\Id(L)=\Id([L,A])$ and $L\subseteq \overline{[A,A]}$. Then $L=\overline{[\Id(L),A]}$. 
\end{lemma}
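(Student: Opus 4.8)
The plan is to reduce everything to the identity $\overline{[L,A]}=\overline{[\Id(L),A]}$ and then show $L=\overline{[L,A]}$. Write $I=\Id(L)$, which by hypothesis equals $\Id([L,A])$. First I would apply Theorem \ref{bresar}(ii) with this Lie ideal $L$: since $I=\Id([L,A])$, it gives
\[
\overline{[\Id(L),A]}=\overline{[\Id([L,A]),A]}=\overline{[L,A]}.
\]
So it suffices to prove $L=\overline{[L,A]}$.

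One inclusion is immediate: $L$ is a Lie ideal, so $[L,A]\subseteq L$, and since $L$ is closed, $\overline{[L,A]}\subseteq L$. For the reverse inclusion I would use the second hypothesis together with Lemma \ref{bunce}. Since $L\subseteq I$ and $L\subseteq\overline{[A,A]}$, we have $L\subseteq I\cap\overline{[A,A]}$. Lemma \ref{bunce}, applied to the closed two-sided ideal $I$, says $I\cap\overline{[A,A]}=\overline{[I,A]}$. Combining this with the identity $\overline{[I,A]}=\overline{[L,A]}$ obtained above (recall $I=\Id([L,A])$, so $\overline{[I,A]}=\overline{[\Id([L,A]),A]}=\overline{[L,A]}$ by Theorem \ref{bresar}(ii)) yields $L\subseteq\overline{[L,A]}$.

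Putting the two inclusions together gives $L=\overline{[L,A]}=\overline{[\Id(L),A]}$, as claimed. There is no real obstacle here beyond correctly bookkeeping the two roles of the hypothesis $\Id(L)=\Id([L,A])$: once to invoke Theorem \ref{bresar}(ii) (which is stated for $[L,A]$, hence needs $\Id([L,A])$), and once via $L\subseteq\Id(L)=I$ to land inside $I\cap\overline{[A,A]}$ so that Lemma \ref{bunce} applies. The containment $L\subseteq\overline{[A,A]}$ is exactly what makes the intersection with $\overline{[A,A]}$ trivial on $L$, which is the point where that assumption is consumed.
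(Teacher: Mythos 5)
Your proof is correct and follows essentially the same route as the paper's: one inclusion comes from $L\subseteq\Id(L)\cap\overline{[A,A]}$ together with Lemma \ref{bunce}, and the other from Theorem \ref{bresar}(ii) combined with the fact that $L$ is a closed Lie ideal (so $\overline{[L,A]}\subseteq L$). Your intermediate reduction to $L=\overline{[L,A]}$ is just a slightly more explicit packaging of the same two ingredients.
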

\begin{proof}
The inclusion  $L\subseteq \overline{[\Id(L),A]}$ follows from $L\subseteq \overline{[A,A]}\cap \Id(L)$ and Lemma \ref{bunce}. As for the opposite inclusion,  we have $\overline{[\Id(L),A]}=\overline{[\Id([L,A]),A]}$, by assumption, and  $\overline{[\Id([L,A]),A]}\subseteq L$, by Theorem \ref{bresar}.
\end{proof}

The following is an improvement on Theorem \ref{bresar} (ii) obtained by the same technique:
\begin{theorem}\label{betterbresar}
	Let $K$ and $L$ be Lie ideals of $A$. Then $\overline{[K,L]}=\overline{[\Id([K,L]),A]}$.
\end{theorem}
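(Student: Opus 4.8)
The plan is to reduce everything to the observation that $M:=[K,L]$ is \emph{itself} a Lie ideal of $A$, and then to run the now-familiar ``pass to a quotient and apply Herstein's theorem'' argument used in the proofs of Theorems \ref{commutator} and \ref{bresar}. That $M$ is a Lie ideal follows from the Jacobi identity: for $k\in K$, $l\in L$, $a\in A$ one has $[[k,l],a]=[[k,a],l]+[k,[l,a]]$, and since $K$ and $L$ are Lie ideals, $[k,a]\in K$ and $[l,a]\in L$, so both summands lie in $[K,L]$; hence $[M,A]\subseteq M$. Also $M\subseteq[A,A]$, so $\overline M$ is a closed Lie ideal contained in $\overline{[A,A]}$. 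By Lemma \ref{basiclemma}, applied to $\overline M$, it then suffices to show that $\Id(\overline M)=\Id([\overline M,A])$; and since $\Id$ of a set equals $\Id$ of its closure and $[M,A]\subseteq[\overline M,A]\subseteq\overline{[M,A]}$, this is equivalent to the purely algebraic statement $\Id(M)=\Id([M,A])$.

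One inclusion is free: $[M,A]\subseteq M\subseteq\Id(M)$ gives $\Id([M,A])\subseteq\Id(M)$. For the reverse inclusion I would work in the $\Cstar$-algebra $A/J$ with $J:=\Id([M,A])=\Id([[K,L],A])$. Writing $\widetilde K,\widetilde L$ for the images of $K,L$ in $A/J$ (Lie ideals of $A/J$), the defining property of $J$ gives $[[\widetilde K,\widetilde L],A/J]=0$; in particular every element $[\tilde k,\tilde l]$ ($\tilde k\in\widetilde K$, $\tilde l\in\widetilde L$) is central in $A/J$. Now fix $\tilde l\in\widetilde L$: since $[\tilde l,\tilde k]=-[\tilde k,\tilde l]$ is central, $[\tilde l,[\tilde l,\tilde k]]=0$ for every $\tilde k\in\widetilde K$, i.e.\ $[\tilde l,[\tilde l,\widetilde K]]=0$. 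Herstein's theorem, applied to the Lie ideal $\widetilde K$ of $A/J$ and the element $\tilde l$, yields $[\tilde l,\widetilde K]=0$. As $\tilde l\in\widetilde L$ was arbitrary, $[\widetilde K,\widetilde L]=0$, that is, $M=[K,L]\subseteq J$, so $\Id(M)\subseteq\Id([M,A])$. Feeding $\Id(M)=\Id([M,A])$ into Lemma \ref{basiclemma} gives $\overline{[K,L]}=\overline M=\overline{[\Id(M),A]}=\overline{[\Id([K,L]),A]}$.

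I expect the only real friction to be the bookkeeping with norm-closures when invoking Lemma \ref{basiclemma} (verifying $\Id(\overline M)=\Id(M)$ and $\Id([\overline M,A])=\Id([M,A])$), which is routine once one recalls that a closed two-sided ideal containing a set contains its closure. Everything else is a transcription of the ``Herstein in the quotient'' pattern already deployed twice in this section -- here a single application suffices, precisely because $K$ and $L$ are Lie ideals rather than all of $A$. If one prefers to avoid Lemma \ref{basiclemma}, the same ingredients finish directly: from $\Id(M)=\Id([M,A])=\overline{[M,A]+[M,A]^2}$ (Theorem \ref{bresar}(i) applied to the Lie ideal $M$) and the linearizing property \eqref{linearizing} one gets $[\Id(M),A]\subseteq\overline{[[M,A],A]}\subseteq\overline{[M,A]}\subseteq\overline M$, while $M\subseteq\Id(M)\cap\overline{[A,A]}\subseteq\overline{[\Id(M),A]}$ by Lemma \ref{bunce}, and these two inclusions together give the claimed equality.
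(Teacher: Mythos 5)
Your proof is correct and follows essentially the same route as the paper's: identify $M=[K,L]$ as a Lie ideal, pass to the quotient by $\Id([M,A])$, apply Herstein's theorem (with $\widetilde K$ as the Lie ideal and $\tilde l\in\widetilde L$ as the element) to conclude $M\subseteq\Id([M,A])$, hence $\Id(M)=\Id([M,A])$, and finish with Lemma \ref{basiclemma}. You are a bit more explicit than the paper about exactly how Herstein's theorem is invoked, and you add a clean alternative ending via Theorem \ref{bresar}(i) and Lemma \ref{bunce}, but the underlying argument is the same.
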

\begin{proof}
	Let $M=[K,L]$. Notice that $M$ is again a Lie ideal (by Jacobi's identity). We will deduce that $\overline{M}=\overline{[\Id(M),A]}$  from the previous lemma. We clearly have that $\overline{M}\subseteq \overline{[A,A]}$. Let $I=\Id([M,A])$ and let $\widetilde K$, $\widetilde L$, and $\widetilde M$ denote the images of $K$, $L$, and $M$ in the quotient by this ideal. From  $[\widetilde M,A/I]=0$ and $[\widetilde K,\widetilde L]= \widetilde M$  we get that 
	$[[\widetilde K,\widetilde L],\widetilde L]=0$. By Herstein's theorem, 
	$[\widetilde K,\widetilde L]=0$; i.e, 
	$M=[K,L]\subseteq I$. It follows that $\Id(M)=\Id([M,A])$. By Lemma \ref{basiclemma}, $M=\overline{[\Id(M),A]}$, as desired.
\end{proof}

\begin{remark}
	The arguments in  Theorems \ref{commutator}, \ref{bresar}, and \ref{betterbresar}  rely crucially on the fact that the closed two-ideals of a $\Cstar$-algebra are semiprime. This makes it possible to apply Herstein's theorem in the quotient by a closed two-sided ideal. Turning to non-closed Lie ideals, if we impose the semiprimeness of a suitable non-closed two-sided ideal at the outset, part of those same arguments still goes through. We may obtain in this way,  for instance, the following result:
	\emph{If $L$ is a Lie ideal of $A$ such that the two-sided ideal generated by $[[L,A],[L,A]]$ is semiprime then  (i) $A[L,A]A=[L,A]+[L,A]^2$, and (ii)
		$[A[L,A]A],A]=[[L,A],A]$}. 
	To get (i) we  proceed as in Theorem \ref{bresar} (i):  Setting $M=[L,A]$ and  $I=A[M,M]A$ and applying Herstein's theorem in $A/I$ in much the same way as we did in Theorem \ref{bresar} (i) we arrive at  $[L,A]\subseteq I$.  We then have the inclusions $A[L,A]A\subseteq I\subseteq [L,A]+[L,A]^2$, which must in fact be   equalities.
	To get (ii) we apply (i) and the linearizing property of $[\cdot,A]$:
	\[
	[A[L,A]A,A]=[[L,A]+[L,A]^2,A]=[[L,A],A].
	\] 
\end{remark}

Next we discuss another variation on Theorem \ref{bresar} for non-closed Lie ideals. This time we make use of the Pedersen ideal. Recall that the Pedersen ideal of a $\Cstar$-algebra is the smallest dense two-sided ideal of the algebra (see \cite[5.6]{pedersen}).
Given a $\Cstar$-algebra $B$, we denote its Pedersen ideal by  $\mathrm{Ped}(B)$.

\begin{lemma}\label{PPPA}
	Let $I$ be a closed two-sided ideal of $A$. Then 
	\[
	[\mathrm{Ped}(I),\mathrm{Ped}(I)]=[\mathrm{Ped}(I),A].
	\]	
\end{lemma}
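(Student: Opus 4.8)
The inclusion $[\mathrm{Ped}(I),\mathrm{Ped}(I)]\subseteq[\mathrm{Ped}(I),A]$ is trivial, so the content is the reverse inclusion: every commutator $[p,a]$ with $p\in\mathrm{Ped}(I)$ and $a\in A$ should be rewritten as a sum of commutators of elements of $\mathrm{Ped}(I)$. The key structural fact I would use is that $\mathrm{Ped}(I)$ is a two-sided ideal of $A$ (not merely of $I$): indeed $\mathrm{Ped}(I)$ is an ideal of $I$, and since ideals of ideals of a $\Cstar$-algebra are again ideals of the ambient algebra (using an approximate unit of $I$, which multiplies $\mathrm{Ped}(I)$ into itself because $\mathrm{Ped}(I)$ absorbs multiplication by positive elements of $I$ of norm $\le 1$ via functional calculus), we get $A\,\mathrm{Ped}(I)\,A\subseteq\mathrm{Ped}(I)$. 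Thus for $p\in\mathrm{Ped}(I)$ and $a\in A$ we have $pa,\,ap\in\mathrm{Ped}(I)$ already.

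The second ingredient is a factorization property inside the Pedersen ideal: every element of $\mathrm{Ped}(I)$ can be written as a finite sum of products $q_1 q_2$ with $q_1,q_2\in\mathrm{Ped}(I)$. This follows because $\mathrm{Ped}(I)$ is spanned by its positive part, and a positive element $h\in\mathrm{Ped}(I)$ factors as $h=h^{1/2}\cdot h^{1/2}$ with $h^{1/2}\in\mathrm{Ped}(I)$ (the Pedersen ideal is closed under continuous functional calculus by functions vanishing at $0$). So it suffices to treat $p=q_1q_2$ with $q_i\in\mathrm{Ped}(I)$. Then I would apply the commutator identity already used in the paper,
\[
[q_1 q_2,a]=[q_1,q_2 a]+[q_2,a q_1].
\]
Here $q_2 a$ and $a q_1$ lie in $\mathrm{Ped}(I)$ because $\mathrm{Ped}(I)$ is an ideal of $A$, and $q_1,q_2\in\mathrm{Ped}(I)$; hence $[q_1,q_2a]$ and $[q_2,aq_1]$ both belong to $[\mathrm{Ped}(I),\mathrm{Ped}(I)]$. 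Summing over the factorization of a general $p$ gives $[p,a]\in[\mathrm{Ped}(I),\mathrm{Ped}(I)]$, which is the desired inclusion.

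**Main obstacle.** The one point that needs genuine care — rather than a one-line identity — is establishing that $\mathrm{Ped}(I)$ is a two-sided ideal of $A$, i.e.\ that it is invariant under left and right multiplication by \emph{all} of $A$, not just by $I$. This is where the definition of the Pedersen ideal via its positive cone $\mathrm{Ped}(I)_+$ (the hereditary-type generation from elements dominated, up to scalar, by square-summable approximations — see \cite[5.6]{pedersen}) must be invoked, together with the fact that $a h a^*\le\|a\|^2 h$ for $h\ge 0$, which keeps one inside $\mathrm{Ped}(I)_+$ under two-sided multiplication by a fixed element of $A$; polarization then upgrades this to arbitrary (not necessarily self-adjoint) multipliers. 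Once the ideal property is in hand, the rest is exactly the factorization-plus-commutator-identity routine sketched above, which parallels the proof of Lemma \ref{bunce}.
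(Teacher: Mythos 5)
Your argument arrives at the right conclusion, but it takes a genuinely different and more machinery-heavy route than the paper, and one of your intermediate claims is justified only loosely. The paper's proof is a two-liner that exploits the \emph{minimality} of the Pedersen ideal: since $P^2$ is easily seen to be a dense two-sided ideal of $I$ (it is an ideal of $I$ because $P$ is, and it is dense because $\overline{P^2}\supseteq\overline{I\cdot I}=I$), minimality forces $P\subseteq P^2$, hence $P=P^2$. The factorization $P=P^2$ then does all the work at once: it shows that $P$ is an ideal of $A$ (write $y=\sum y_iz_i$ with $y_i,z_i\in P$, so $ya=\sum y_i(z_ia)\in P\cdot I\subseteq P$), and it lets one apply the identity $[xy,a]=[x,ya]+[y,ax]$ to conclude $[P,A]=[P^2,A]\subseteq[P,P]$.

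You instead derive the factorization from ``$h^{1/2}\in\mathrm{Ped}(I)$ whenever $h\in\mathrm{Ped}(I)_+$'' and derive the ideal property from the hereditary cone description together with $aha^*\le\|a\|^2 h$ plus polarization. Both are correct, but neither is elementary: the closure of $\mathrm{Ped}(I)$ under $h\mapsto g(h)$ ($g$ continuous, $g(0)=0$) itself rests on the local-unit property of the Pedersen ideal (for $h\in\mathrm{Ped}(I)_+$ there is $e\in\mathrm{Ped}(I)_+$ with $eh=he=h$, whence $eg(h)=g(h)\in\mathrm{Ped}(I)$), which is a nontrivial theorem you invoke implicitly. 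Also, your parenthetical justification that ``ideals of ideals are again ideals'' via an approximate unit of $I$ is a closed-ideal argument and does not apply as stated to the non-closed ideal $\mathrm{Ped}(I)$: taking $e_\lambda x\to x$ does not keep you in a non-closed ideal unless the convergence is eventually exact, which is again the local-unit property. So the step you flag as the ``main obstacle'' is real, but the paper sidesteps it entirely: minimality gives $P=P^2$ for free, and $P=P^2$ trivially implies everything you labor to prove via the cone structure. It would strengthen your writeup to either adopt the minimality shortcut or explicitly cite the local-unit property of $\mathrm{Ped}(I)$ where you currently gesture at functional calculus.
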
	
\begin{proof}
	Let $P=\mathrm{Ped}(I)$.	The subspace $P^2$ is a dense two-sided ideal of $I$. Since $P$ is the minimum such 
	ideal, we must have that $P=P^2$. From  $[P,A]=[P^2,A]$  and   the identity $[xy,a]=[x,ya]+[y,ax]$  we get that $[P^2,A]\subseteq [P,P]$.  	
\end{proof}

\begin{theorem}\label{pedersen}
	Let $L$ be a Lie ideal of $A$ and let $P=\mathrm{Ped}(\Id([L,A]))$. Then
	\[
	[P,P]=[L,P]=[[L,A],P].
	\] 
	Furthermore, if $L\subseteq P$ then $[L,A]=[P,P]$.	
\end{theorem}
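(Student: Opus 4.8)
The plan is to run the same Herstein-plus-structure argument used in Theorems~\ref{bresar} and \ref{betterbresar}, but now in the Pedersen ideal rather than in a closed ideal. Write $I = \Id([L,A])$ and $P = \mathrm{Ped}(I)$. The key structural fact I would establish first is that $P = P^2$ (already noted in the proof of Lemma~\ref{PPPA}) and, more to the point, that $P$ is a semiprime ring: indeed $P$ is an ideal in the $\Cstar$-algebra $I$, and if $x \in P$ satisfies $xPx = 0$, then by density $x I x = 0$, hence $x^3 = 0$ in the $\Cstar$-algebra, forcing $x = 0$. Since $P$ has no $2$-torsion, Herstein's theorem (as quoted) applies to Lie ideals of $P$.

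Next I would set $M = [L,A]$ and observe, using the linearizing property \eqref{linearizing} together with $I \subseteq \overline{M + M^2}$ from Lemma~\ref{elementary}, that $[I,A] \subseteq \overline{[M,A]}$; restricting attention to $P$, one gets that $[L,P]$, $[P,P]$, and $[[L,A],P]$ all sit between $[[L,A],P]$ and $[L,P]$ up to the kind of bracketing juggling done in Theorem~\ref{bresar}(ii). Concretely: $[P,P] \subseteq [P,A] = [L,A] \cap (\text{stuff in } P)$ by the analogue of Lemma~\ref{bunce} for Pedersen ideals (which should follow since $P$ carries an approximate unit for $I$ consisting of elements of $P$, and these can be taken approximately central), while conversely $[L,P] \subseteq [\overline{[A,A]} \cap I \text{-part}] \subseteq [P,P]$ after pushing everything into $P$ using $P = P^2$. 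The chain $[P,P] = [L,P] = [[L,A],P]$ then closes up exactly as in Theorem~\ref{bresar}(ii), with Lemma~\ref{PPPA} supplying the first equality $[P,P]=[P,A]$ and the Herstein argument in a suitable quotient of $P$ (or of $I$) supplying the containment $[L,A] \subseteq$ the ideal generated by $[M,M]$ that makes the brackets collapse.

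For the final clause, assume $L \subseteq P$. Then $[L,A] \subseteq [P,A] = [P,P]$ by Lemma~\ref{PPPA}, and the reverse inclusion $[P,P] \subseteq [L,A]$ should come from the first part: $[P,P] = [L,P] \subseteq [L,A]$. Hence $[L,A] = [P,P]$, with no closure needed because everything now lives inside the algebraically nice ideal $P$.

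The main obstacle I anticipate is the Pedersen-ideal analogue of Lemma~\ref{bunce}, i.e.\ that $[P,P] = [P,A] = P \cap [A,A]$ (or at least the inclusions in the directions actually used). The closed-ideal version relies on approximately central approximate units, and one needs the corresponding fact that $\mathrm{Ped}(I)$ contains an approximate unit for $I$ that is approximately central in $A$ modulo staying in $P$; verifying that the standard construction of such units lands in the Pedersen ideal, and that the commutator estimates survive without taking closures, is the delicate point. Everything else is a faithful transcription of the Herstein-quotient technique already deployed three times in this section.
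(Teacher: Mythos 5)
Your proposal hinges on a Pedersen-ideal analogue of Lemma~\ref{bunce}, namely $[P,P]=[P,A]=P\cap[A,A]$ without any closures, and you rightly flag this as the delicate point. That is indeed where your approach breaks down: the proof of Lemma~\ref{bunce} is irreducibly an approximate-unit argument producing equalities only after taking norm closure, and there is no evident way to strip the closures off even when restricting to $P$. The paper's proof does not need this fact at all, nor does it run Herstein's theorem in a quotient of $P$ (so the semiprimeness of $P$ you establish, while correct, is not used).

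The paper's actual route is shorter and purely algebraic. It reuses from the proof of Theorem~\ref{bresar} the identity $\Id([L,A])=\Id\bigl([[L,A],[L,A]]\bigr)$, which says that the (non-closed) two-sided ideal $A[[L,A],[L,A]]A$ is \emph{dense} in $\Id([L,A])$. Since $P$ is by definition the \emph{smallest} dense two-sided ideal of $\Id([L,A])$, one gets the crucial algebraic containment $P\subseteq A[[L,A],[L,A]]A$. From there, Lemma~\ref{elementary} applied to the Lie ideal $[L,A]$ gives $A[[L,A],[L,A]]A\subseteq [L,A]+[L,A]^2$, and the linearizing property (together with $P$ being an ideal of $A$) collapses the chain
\[
[P,P]\subseteq \bigl[A[[L,A],[L,A]]A,\,P\bigr]\subseteq \bigl[[L,A]+[L,A]^2,\,P\bigr]\subseteq [[L,A],P]\subseteq [L,P],
\]
while Lemma~\ref{PPPA} supplies the reverse $[L,P]\subseteq[P,A]=[P,P]$. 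The key idea you are missing is precisely this use of the \emph{minimality} of the Pedersen ideal to land $P$ inside the purely algebraic ideal $A[[L,A],[L,A]]A$; that single observation replaces the entire closure-stripping program you sketch. Your treatment of the final clause ($L\subseteq P$ implies $[L,A]=[P,P]$) does match the paper's and is correct.
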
	
\begin{proof}
	In the course of proving Theorem \ref{bresar}  we have shown  that 
	$\Id([L,A])=\Id([[L,A],[L,A]])$. Therefore, the two-sided ideal $A[[L,A],[L,A]]A$ is   dense in $\Id([L,A])$. Since $P$ is the smallest such ideal,  $P\subseteq A[[L,A],[L,A]]A$. Hence,
	\[
	[P,P]\subseteq [A[[L,A],[L,A]]A,P]\subseteq [[L,A]+[L,A]^2,P]\subseteq [[L,A],P]\subseteq [L,P].
	\]
	But $[L,P]\subseteq [P,P]$, by Lemma \ref{PPPA} . Thus, the inclusions above must be equalities.
	
	Suppose now that $L\subseteq P$. Then 
	$[L,P]\subseteq [L,A]\subseteq [P,A]=[P,P]$,
	the latter equality by Lemma \ref{PPPA}. Since $[L,P]=[P,P]$, these inclusions  must be equalities.	
\end{proof}	

\begin{corollary}\label{smallest}
	Among the Lie ideals $L$ such that $\overline{[L,A]}=\overline{[A,A]}$, the Lie ideal 
	\[
	[\mathrm{Ped}(\Id([A,A])),\mathrm{Ped}(\Id([A,A]))]
	\] is the smallest.	
\end{corollary}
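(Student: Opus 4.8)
The plan is to derive the statement directly from Theorem \ref{pedersen}. Write $P:=\mathrm{Ped}(\Id([A,A]))$ and $L_0:=[P,P]$; since $P$ is a two-sided ideal it is in particular a Lie ideal, so $L_0$ is a Lie ideal (brackets of Lie ideals are Lie ideals, by Jacobi). There are two things to check: that $L_0$ itself satisfies $\overline{[L_0,A]}=\overline{[A,A]}$, and that $L_0\subseteq L$ for every Lie ideal $L$ with $\overline{[L,A]}=\overline{[A,A]}$.

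The one observation that makes everything go is that, for any Lie ideal $L$, the closed ideal $\Id([L,A])$ depends only on the closure $\overline{[L,A]}$; hence if $\overline{[L,A]}=\overline{[A,A]}$ then $\Id([L,A])=\Id([A,A])$ and so $\mathrm{Ped}(\Id([L,A]))=P$. With this in hand, Theorem \ref{pedersen} (applied to $L$, with this common $P$) gives $L_0=[P,P]=[L,P]$; and since $[L,P]\subseteq[L,A]\subseteq L$ (the last inclusion because $L$ is a Lie ideal), we get $L_0\subseteq L$. That is the minimality half.

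To see that $L_0$ is itself in the family: $\overline{[L_0,A]}\subseteq\overline{[A,A]}$ is trivial since $L_0\subseteq A$. For the reverse, I would use Theorem \ref{pedersen} with $L=A$ (equivalently Lemma \ref{PPPA}) to rewrite $L_0=[P,P]=[P,A]$, so $[L_0,A]=[[P,A],A]$; then Theorem \ref{bresar}(ii) applied to the Lie ideal $P$ yields $\overline{[[P,A],A]}=\overline{[P,A]}$. Finally, because $P$ is dense in $\Id([A,A])$ and the commutator is norm-continuous, $\overline{[P,A]}=\overline{[\Id([A,A]),A]}$, which equals $\overline{[A,A]}$ by Theorem \ref{bresar}(ii) (or Lemma \ref{bunce}). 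Combining, $\overline{[L_0,A]}=\overline{[A,A]}$.

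I do not expect a serious obstacle: the content is essentially the identity $\mathrm{Ped}(\Id([L,A]))=\mathrm{Ped}(\Id([A,A]))$ together with Theorem \ref{pedersen}, and the remaining manipulations are routine applications of Lemma \ref{bunce}, Lemma \ref{PPPA}, Theorem \ref{bresar}, and continuity of the bracket. The point requiring the most care is simply to notice that membership in the family $\{L:\overline{[L,A]}=\overline{[A,A]}\}$ is controlled entirely by the closed ideal $\Id([A,A])$, which is exactly why a single Lie ideal can be smallest among them.
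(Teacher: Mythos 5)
Your proof is correct and follows essentially the same route as the paper: Theorem \ref{pedersen} handles minimality, and Lemma \ref{bunce}, Lemma \ref{PPPA}, Theorem \ref{bresar}(ii), and density of the Pedersen ideal show that $[P,P]$ lies in the family. You also helpfully make explicit the observation — left implicit in the paper's final line — that $\overline{[L,A]}=\overline{[A,A]}$ forces $\Id([L,A])=\Id([A,A])$, so the $P$ appearing in Theorem \ref{pedersen} for any such $L$ coincides with $\mathrm{Ped}(\Id([A,A]))$.
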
	
\begin{proof}
	Let $P=\mathrm{Ped}(\Id([A,A]))$. Then
	\begin{align*}
	\overline{[[P,P],A]} &=\overline{[[\Id([A,A]),\Id([A,A])],A]}\\
	&=\overline{[[\Id([A,A]),A],A]}\\
	&=\overline{[\Id([A,A]),A]}\\
	&=\overline{[A,A]}.
	\end{align*}
	The second  equality holds by Lemma \ref{bunce} and the third and fourth by Theorem \ref{bresar}. 
	Thus, $[P,P]$ is a Lie ideal satisfying that $\overline{[L,A]}=\overline{[A,A]}$.
	
	Suppose now that $L$ is a Lie ideal such that $\overline{[L,A]}=\overline{[A,A]}$.
	By Theorem \ref{pedersen}, $[P,P]=[L,P]\subseteq L$. So $L$ contains $[P,P]$.
\end{proof}

It seems possible that under some $\Cstar$-algebra regularity condition, such as $A$ being pure (i.e, having almost unperforated and almost divisible Cuntz semigroup), it is the case that for every Lie ideal $L$ there exists  a  two-sided  -- possible non-closed -- ideal $I$ such that $[L,A]=[I,A]$ (in the language of \cite{BKS}, $L$ and $I$ are called commutator equal). At present, we don't even have an answer to the following question:
\begin{question}
	Is there a $\Cstar$-algebra $A$ and a Lie ideal $L$ of $A$, such that $[L,A]\neq [I,A]$
	for all two-sided (possibly non-closed) ideals $I$ of $A$?	
\end{question}

We turn now to Lie ideals of $[A,A]$. 
A linear subspace  $U\subseteq A$ is called a Lie ideal of $[A,A]$ if 
$[U,[A,A]]\subseteq U$. 
Herstein's \cite[Theorem 1.12]{herstein} implies that if $A$ is   simple  and unital then a Lie ideal of $[A,A]$ is automatically a Lie ideal of $A$ (this holds for simple 
rings without 2-torsion).  In Theorem \ref{LieAA} below we show that the simplicity assumption can be dropped  for \emph{closed} Lie ideals of $[A,A]$. The key of the argument is again to apply a theorem of Herstein (Lemma \ref{algebraUU} below) in the quotient by a suitable closed two-sided ideal.

\begin{lemma}\label{4U}
	Let $U$ be a  Lie ideal of $[A,A]$. Let $V=[U,U]$, $W=[V,V]$, and $X=[W,W]$.
	Then $A[X,X]A\subseteq [U,U]+[U,U]^2$.
\end{lemma}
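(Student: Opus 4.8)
The plan is to attach a short descending chain of Lie ideals of $[A,A]$ to $U$ and then run the commutator bookkeeping that underlies Lemma~\ref{elementary}.

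First I would record the structural facts. Being spanned by commutators, $V=[U,U]$, $W=[V,V]$ and $X=[W,W]$ all lie in $[A,A]$, and a routine Jacobi computation (brackets of Lie ideals are again Lie ideals) shows each of them is a Lie ideal of $[A,A]$. Since $V$ is a Lie ideal of $[A,A]$ and $V\subseteq[A,A]$, this forces $W=[V,V]\subseteq[V,[A,A]]\subseteq V$, and likewise $X\subseteq W$ and $[X,X]\subseteq[W,W]=X$. Next I would prove the ``descent'' $[W,A]\subseteq V$ and $[X,A]\subseteq W$: using Jacobi together with the fact that any commutator $[y,a]$ already lies in $[A,A]$, one gets $[[v_1,v_2],a]=[[v_1,a],v_2]+[v_1,[v_2,a]]\in[V,[A,A]]\subseteq V$ for $v_i\in V$, and the identical computation one layer down gives $[X,A]\subseteq W$.

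Two small computations supply the remaining data. (i) $AX\subseteq V+V^2$: for $w_1,w_2\in W$ and $a\in A$, the identity $a[w_1,w_2]=[aw_1,w_2]-[a,w_2]w_1$ (the one used in the proof of Lemma~\ref{elementary}) has first term in $[W,A]\subseteq V$, while $[a,w_2]\in[W,A]\subseteq V$ and $w_1\in W\subseteq V$ place the second term in $V^2$. (ii) $[[X,X],A]\subseteq X$: for $x_1,x_2\in X$, Jacobi gives $[[x_1,x_2],a]=[[x_1,a],x_2]+[x_1,[x_2,a]]$, and here $[x_i,a]\in[X,A]\subseteq W$ while $x_j\in X\subseteq W$, so each bracket is a commutator of two elements of $W$ and hence lies in $[W,W]=X$. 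This ``recovery of a level''—available for $[X,X]$ although not for $X$ itself—is the crucial point.

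To conclude I would assemble these. From $za=az+[z,a]$ and (ii), $[X,X]A\subseteq A[X,X]+[[X,X],A]\subseteq A[X,X]+X$. Multiplying on the left by $A$ and using $A\cdot A[X,X]\subseteq A[X,X]$ (this is where the right-hand copy of $A$ gets absorbed), we obtain $A[X,X]A\subseteq A[X,X]+AX$. Since $[X,X]\subseteq X$ we have $A[X,X]\subseteq AX$, so by (i), $A[X,X]A\subseteq AX\subseteq V+V^2=[U,U]+[U,U]^2$. The step I expect to be the genuine obstacle is recognizing the correct strategy in the first place: neither $X$ nor $[X,X]$ is a Lie ideal of $A$, and taking further brackets does not repair this, so Lemma~\ref{elementary} is not directly applicable; and the crude inclusion $A[X,X]A\subseteq AXA$ is useless because $AXA$ is far too large. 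The working idea is instead to move the right-hand $A$ to the left, where it disappears, at the cost only of the correction term $[[X,X],A]$—which by the level-recovery in (ii) is no bigger than $X$, small enough that $A$ times it still lands inside $V+V^2$.
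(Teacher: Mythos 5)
Your proof is correct and follows essentially the same route as the paper's: you establish the same preparatory facts (the chain $V,W,X$ of Lie ideals of $[A,A]$, the inclusions $W\subseteq V$, $X\subseteq W$, $[X,X]\subseteq X$, and the descents $[W,A]\subseteq V$, $[X,A]\subseteq W$), then use the same commutation trick $[X,X]A\subseteq A[X,X]+[[X,X],A]$ with the pivotal observation $[[X,X],A]\subseteq X$, and finish with $A[W,W]\subseteq [A,W]+[A,W]W\subseteq V+V^2$. The only cosmetic difference is how you read off $[[X,X],A]\subseteq X$ (via $[X,A]\subseteq W$ and $[W,W]=X$, rather than directly from $X$ being a Lie ideal of $[A,A]$), which is the same Jacobi computation in a slightly different dress.
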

\begin{proof}
	(Cf. \cite[Lemma 1.7]{herstein}.)
	In the following inclusions  we make use of Jacobi's identity and the fact that $U$ is a Lie ideal of $[A,A]$:
	\begin{align*}
	[[U,U],A] &\subseteq [U,[A,A]]\subseteq U,\\
	[[U,U],[A,A]]&\subseteq [[U,[A,A]],U]\subseteq [U,U].
	\end{align*}
	That is, $[V,A]\subseteq U$ and $V$ is a Lie ideal of $[A,A]$. We deduce similarly that  $[A,W]\subseteq V$ and that  $W$  and $X$ are Lie ideals of $[A,A]$. 
	Finally, since $V\subseteq [A,A]$ we have that $[V,V]\subseteq V$; i.e., $W\subseteq V$. We deduce  similarly that
	$[X,X]\subseteq X$. Having made this preparatory remarks, we attack the lemma:
	\begin{align*}
	[X,X]A &\subseteq A[X,X]+[[X,X],A]\\
	&\subseteq A[X,X]+X\\
	&\subseteq AX+X.
	\end{align*}
	Hence, $A[X,X]A\subseteq  AX=A[W,W]$. Using now that $a[w_1,w_2]=[aw_1,w_2]-[a,w_2]w_1$ we get that
	\begin{align*}
	A[W,W] &\subseteq [A,W]+[A,W]W\\
	&\subseteq V+VW\\
	&\subseteq V+V^2.
	\end{align*}
	Thus, $A[X,X]A\subseteq V+V^2$, as desired.
\end{proof}

\begin{lemma}\label{algebraUU}
	Let $U$ be a  Lie ideal of $[A,A]$.
	If $[[U,U],A]=0$ then $[U,A]=0$.
\end{lemma}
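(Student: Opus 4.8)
The plan is to fix $u\in U$ and analyze the inner derivation $d\colon A\to A$, $d(x)=[u,x]$. First I would record what the hypothesis gives: $[[U,U],A]=0$ says exactly that $[U,U]$ lies in the center $Z(A)$. From this one reads off a chain of inclusions: $d(A)\subseteq[A,A]$ (a commutator lies in $[A,A]$), $d([A,A])\subseteq U$ (since $U$ is a Lie ideal of $[A,A]$), $d(U)\subseteq[U,U]\subseteq Z(A)$, and $d(Z(A))=0$. In particular $d^2(c)\in Z(A)$ for every $c\in[A,A]$, hence $d^3(c)=0$ for such $c$, and since $d(A)\subseteq[A,A]$ it follows that $d^4=0$ on all of $A$.

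The hard part will be turning this nilpotency into the statement $d^2([A,A])=0$, which is what is really needed. One cannot hope that a nilpotent inner derivation of a semiprime algebra vanishes — for instance $[e_{12},[e_{12},[e_{12},\,\cdot\,]]]=0$ in $M_2(\C)$ while $[e_{12},\,\cdot\,]\neq 0$ — so the argument must use that $d$ carries $U$ into the center, equivalently that $d^2$ carries $[A,A]$ into the center. Concretely, fix $c\in[A,A]$ and set $z=d^2(c)\in Z(A)$; then $d^3(c)=d^4(c)=0$, and expanding $0=d^4(c\cdot c)$ by the Leibniz rule annihilates every term except the middle one, leaving $6z^2=0$, hence $z^2=0$. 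Since $A$ is semiprime and $z$ is central, $z=0$. Thus $[u,[u,c]]=0$ for all $u\in U$ and $c\in[A,A]$.

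Then Herstein's theorem closes the argument in two steps. Applied to the Lie ideal $[A,A]$ of $A$ and the element $u$, the identity $[u,[u,[A,A]]]=0$ gives $[u,[A,A]]=0$; as $u\in U$ was arbitrary, $[U,[A,A]]=0$. Consequently, for any $u\in U$ and $a\in A$ we have $[u,a]\in[A,A]$, so $[u,[u,a]]=0$; that is, the derivation $[u,\,\cdot\,]$ squares to zero on $A$. A second application of Herstein's theorem, now with the Lie ideal $A$ and the element $u$, yields $[u,A]=0$, and therefore $[U,A]=0$, as desired. Throughout, the only properties of $A$ used beyond those of a general ring are semiprimeness and the absence of torsion, both of which hold for $\Cstar$-algebras.
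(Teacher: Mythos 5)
Your proof is correct, and it supplies something the paper does not: the paper's ``proof'' of this lemma consists entirely of two citations (Herstein's book for the simple case, Rowen's exercise for the semiprime extension), with no argument in the text. You have reconstructed a self-contained proof, resting only on the version of Herstein's theorem already stated in Section 1 (for a Lie ideal $L$ and $t\in A$, $[t,[t,L]]=0$ implies $[t,L]=0$). The key steps all check out: the chain $d(A)\subseteq[A,A]$, $d([A,A])\subseteq U$, $d(U)\subseteq[U,U]\subseteq Z(A)$, $d(Z(A))=0$ gives $d^3|_{[A,A]}=0$ and hence $d^4=0$ on $A$; the Leibniz expansion of $0=d^4(c^2)$ kills every term except $6\,d^2(c)^2$, so $z:=d^2(c)$ is a central square-zero element; semiprimeness (or, in the $\Cstar$-setting, that $Z(A)$ is a commutative $\Cstar$-algebra) forces $z=0$; and then Herstein's theorem applied first to $L=[A,A]$ and then to $L=A$ gives $[u,A]=0$. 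This is in the spirit of the classical argument in Herstein's monograph (study the iterated inner derivation and exploit semiprimeness), but your write-up is cleaner than a mere pointer to the literature and correctly isolates exactly which ring-theoretic hypotheses (semiprimeness, no $2$- or $3$-torsion) are used. One could tighten the final lines slightly by noting that once $[U,[A,A]]=0$ one has $[u,a]\in[A,A]$ and hence $[u,[u,a]]=0$ automatically, which is exactly what you do; no gap there.
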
	
\begin{proof}
	See \cite[Theorem 1.11]{herstein} for the case of simple rings without 2-torsion. See  \cite[Exercise 17, page 344]{rowen} for the extension to  semiprime rings  without 2-torsion (e.g.,  $\Cstar$-algebras).
\end{proof}

\begin{theorem}\label{LieAA}
	A (norm) closed Lie ideal of $[A,A]$  is a  Lie ideal of $A$.
\end{theorem}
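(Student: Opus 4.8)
The plan is to mimic, at the level of closed Lie ideals, the classical Herstein argument that a Lie ideal of $[A,A]$ in a simple ring is a Lie ideal of the ring, using Lemma \ref{algebraUU} in a suitable quotient. Let $U$ be a norm closed Lie ideal of $[A,A]$, set $V=[U,U]$, $W=[V,V]$, $X=[W,W]$, and let $I=\Id([X,X])$. The goal is to show $[U,A]\subseteq U$; since $U$ is already closed it suffices to show $[U,A]\subseteq \overline{[U,U]+[U,U]^2}$, because Lemma \ref{4U} gives $A[X,X]A\subseteq V+V^2\subseteq U+U^2$, and then one wants to conclude $\Id([X,X])\subseteq \overline{U+U^2}\subseteq U+\overline{U^2}$; but we must be a bit careful — we will actually want $[U,A]$ landing inside $U$, so the cleaner target is to prove $[U,A]\subseteq I$ and $I\subseteq U$ (the latter via Lemma \ref{4U} together with the fact that $U$ is closed and $U^2\subseteq U$ need not hold, so one should instead argue $[U,A]\subseteq \overline{A[X,X]A}\subseteq \overline{V+V^2}$ and then show $\overline{V+V^2}\subseteq U$ directly from $V=[U,U]\subseteq U$, $[V,A]\subseteq U$, and closedness).

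The heart of the matter is the inclusion $[U,A]\subseteq I$. First, pass to $B=A/I$ and write $\widetilde U,\widetilde V,\widetilde W,\widetilde X$ for the images of $U,V,W,X$. By construction $[\widetilde X,\widetilde X]=0$, i.e. $[[\widetilde W,\widetilde W],\widetilde W]$-type commutators vanish; more precisely $[\widetilde X,\widetilde X]=0$ means $[[\widetilde W,\widetilde W],[\widetilde W,\widetilde W]]=0$. Now I would apply Lemma \ref{algebraUU} repeatedly, peeling off one layer at a time. Since $W$ is a Lie ideal of $[A,A]$ (shown in the proof of Lemma \ref{4U}) and $[[\widetilde W,\widetilde W],\widetilde W]=[\widetilde X,\widetilde W]$ — here I need the version of Lemma \ref{algebraUU} with $A$ replaced by the relevant commutator set, or rather I should note that $[\widetilde X, B]=0$ would follow if I first establish $[[\widetilde W,\widetilde W], B]=0$; this is where Herstein's theorem (the one quoted before Theorem \ref{commutator}) enters, exactly as in Theorem \ref{commutator} and Theorem \ref{bresar}: from $[\widetilde X,[\widetilde X, B]]=0$ (which holds because $\Id([X,X])$ kills $[X,X]$, hence kills $[[X,X],[[X,X],A]]$ after taking the ideal generated by $[X,X]$) Herstein gives $[\widetilde X, B]=0$, i.e. $[[\widetilde W,\widetilde W],B]=0$; then Lemma \ref{algebraUU} applied to the Lie ideal $\widetilde W$ of $[B,B]$ gives $[\widetilde W,B]=0$. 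Iterating: $[\widetilde W,B]=0$ means $[[\widetilde V,\widetilde V],B]=0$, so Lemma \ref{algebraUU} applied to $\widetilde V$ gives $[\widetilde V,B]=0$; that means $[[\widetilde U,\widetilde U],B]=0$, so Lemma \ref{algebraUU} applied to $\widetilde U$ gives $[\widetilde U, B]=0$, i.e. $[U,A]\subseteq I$.

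Having $[U,A]\subseteq I$, I combine with Lemma \ref{4U}: $I=\Id([X,X])$ and $[X,X]\subseteq A[X,X]A\subseteq V+V^2$ with $V=[U,U]$; since $V+V^2$ need not be closed, I take closures and get $I\subseteq \overline{V+V^2}$. Finally I must check $\overline{V+V^2}\subseteq U$. From the proof of Lemma \ref{4U}, $[V,A]\subseteq U$ and $V\subseteq [A,A]$, so $V\subseteq [A,A]$ and $V^2\subseteq V\cdot V$; I want $V^2\subseteq U$, which I would get from $v_1v_2 = [v_1',v_1'']v_2$ rewritten via $a[w_1,w_2]=[aw_1,w_2]-[a,w_2]w_1$ — i.e., $V^2\subseteq VV\subseteq [A,V]V+\ldots$ — more directly, $VW\subseteq V+V^2$-style manipulations from Lemma \ref{4U} already show $A[W,W]\subseteq V+V^2$, and one needs the analogous bookkeeping to place $V+V^2$ inside $U$; the safest route is: $V=[U,U]\subseteq U$ since $U$ is a Lie ideal of $[A,A]$ and $U\subseteq[A,A]$, hence $[U,U]\subseteq [U,[A,A]]\subseteq U$; and for $V^2$ note $V^2=[U,U]\cdot[U,U]$, and $[U,U]\subseteq U$, and by the identity $a[u_1,u_2]=[au_1,u_2]-[a,u_2]u_1$ with $a\in V\subseteq[A,A]$ we land in $[[A,A]U,U]+[[A,A],U]U\subseteq [U,[A,A]]+\ldots\subseteq U$ — this last part is the routine but slightly fiddly computation. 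Then $\overline{V+V^2}\subseteq \overline U=U$, and therefore $[U,A]\subseteq I\subseteq U$, completing the proof.

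The main obstacle I anticipate is not any single deep step but the careful iteration of Lemma \ref{algebraUU} and Herstein's theorem through the four layers $X\to W\to V\to U$ in the quotient $A/I$: one must verify at each stage that the relevant image is genuinely a Lie ideal of $[A/I,A/I]$ (inherited from the Lie-ideal-of-$[A,A]$ property, which the proof of Lemma \ref{4U} already records for $U,V,W,X$ in $A$ and which passes to quotients) and that the hypothesis of Lemma \ref{algebraUU}, namely $[[\,\cdot\,,\,\cdot\,],A/I]=0$, is available — which itself requires one application of the semiprime-ring Herstein theorem to go from a double-commutator-vanishing condition to a single-commutator-vanishing one, precisely as in Theorems \ref{commutator} and \ref{bresar}. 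The closedness of $U$ is used only at the very end to absorb $\overline{V+V^2}$, and the semiprimeness of the $\Cstar$-algebra quotient $A/I$ is what legitimizes invoking both Herstein's theorem and Lemma \ref{algebraUU} there.
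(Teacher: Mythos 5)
Your overall structure---the sets $V=[U,U]$, $W=[V,V]$, $X=[W,W]$, the ideal $I=\Id([X,X])$, the iteration of Lemma~\ref{algebraUU} in $A/I$ to obtain $[U,A]\subseteq I$, and the use of Lemma~\ref{4U} to place $I$ inside $\overline{[U,U]+[U,U]^2}$---matches the paper. But there are two problems, the second of which is fatal.

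First, a minor issue in reaching $[\widetilde X,B]=0$ in $B=A/I$: you attempt to invoke the first-quoted Herstein theorem from the hypothesis $[\widetilde X,[\widetilde X,B]]=0$, but that hypothesis is not justified (from $[X,X]\subseteq I$ one gets $[\widetilde X,\widetilde X]=0$, not $[\widetilde X,[\widetilde X,B]]=0$, since $[X,A]$ is generally much larger than $X$); moreover that theorem is stated for Lie ideals of the whole algebra, which $\widetilde X$ is not known to be. The correct and simpler move, as in the paper, is to note that $[\widetilde X,\widetilde X]=0$ trivially gives $[[\widetilde X,\widetilde X],B]=0$, so Lemma~\ref{algebraUU} applied directly to the Lie ideal $\widetilde X$ of $[B,B]$ yields $[\widetilde X,B]=0$. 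Then three more applications of the same lemma, exactly as you describe, give $[U,A]\subseteq I$.

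The real gap is the final step, where you aim for $I\subseteq \overline{V+V^2}\subseteq U$ and hence $[U,A]\subseteq I\subseteq U$. The inclusion $I\subseteq U$ is simply false in general: $I$ is a closed two-sided ideal of $A$, while $U$ is a subspace with no reason to contain any nonzero ideal. Concretely, take $U=\overline{[A,A]}$ for a unital $A$ with a faithful bounded trace and no one-dimensional representations; then $I$ works out to be $\Id([A,A])=A$, yet $\overline{[A,A]}\subsetneq A$. Correspondingly, the computation you sketch to show $V^2\subseteq U$ does not close: the term $[au_1,u_2]$ with $a\in V$ lands in $[VU,U]$, and $VU$ is not contained in $[A,A]$, so $[VU,U]\subseteq [U,[A,A]]$ fails (already in $M_2(\C)$, products of two trace-zero matrices can have nonzero trace). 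The paper circumvents this by \emph{not} attempting $I\subseteq U$; instead it takes commutators with $A$ and uses the linearizing property \eqref{linearizing} to kill the $V^2$ term: $\overline{[\Id([U,A]),A]}=\overline{[[U,U]+[U,U]^2,A]}=\overline{[[U,U],A]}\subseteq U$, and then invokes Lemma~\ref{bunce} to get $[U,A]\subseteq\Id([U,A])\cap\overline{[A,A]}=\overline{[\Id([U,A]),A]}\subseteq U$. Those two ingredients---linearizing under $[\,\cdot\,,A]$ and Lemma~\ref{bunce}---are exactly what your argument is missing and what makes the conclusion hold.
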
	

\begin{proof}
	Let $U$ be a closed Lie ideal of $[A,A]$. Consider the sets  
	$V=[U,U]$, $W=[V,V]$ and  $X=[W,W]$. 
	Let $I=\Id([X,X])$. Let $\widetilde U$ denote the image of $U$ in $A/I$ by the quotient map. Define  $\widetilde V$, $\widetilde W$, and $\widetilde X$ similarly.
	Then $[\widetilde X,\widetilde X]=0$, which, by  Lemma \ref{algebraUU},  implies that
	$[\widetilde X,A/I]=0$. That is, $[[\widetilde W,\widetilde W], A/I]=0$. Again by Lemma \ref{algebraUU} we get that $[\widetilde W, A/I]=0$. That is, $[[\widetilde V,\widetilde V],A/I]=0$. Two more applications of Lemma \ref{algebraUU} then yield that  $[\widetilde U, A/I]=0$. That is, $[U,A]\subseteq I$. Hence,
	\[
	\Id([U,A])\subseteq I\subseteq \overline{[U,U]+[U,U]^2}\subseteq \Id([U,U]).
	\]
	In the second inclusion we have used Lemma \ref{4U}.
	Since $\Id([U,U])\subseteq \Id([U,A])$, all these must be equalities.
	Taking commutators with $A$ and using \eqref{linearizing}
	we get 
	\[	\overline{[\Id([U,A]),A]}=\overline{[[U,U]+[U,U]^2,A]}=\overline{[[U,U],A]}\subseteq U.	
	\]
	Lemma \ref{bunce}, on the other hand, implies that
	\[
	[U,A] \subseteq \Id([U,A])\cap \overline{[A,A]}=\overline{[\Id([U,A]),A]}.
	\]
	Hence, $[U,A]\subseteq U$; i.e., $U$ is  a Lie ideal of $A$.
\end{proof}

\section{Nilpotents and polynomials}
In this section we look at closed Lie ideals spanned by nilpotents and by the range of polynomials.

For each natural number $k\geq 2$ let $N_k$ denote the set of nilpotent elements of $A$ of order exactly $k$. Since the set $N_k$ is invariant by unitary conjugation (and by similarity), the closed subspace $\overline{\Lin(N_k)}$ is a Lie ideal of $A$ (see \cite{pedersen} and Theorem \ref{closedinvariance} below).

The following lemma is surely well known:
\begin{lemma}\label{aluthge}
	Every element of  $N_k$  is a sum of $k-1$ commutators for all $k\geq 2$.  
\end{lemma}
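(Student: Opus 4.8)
The plan is to write a nilpotent $a\in N_k$ in a block-upper-triangular form in which the commutator structure is transparent. Specifically, since $a^k=0$ and $a^{k-1}\neq 0$, I would work with the filtration $A = \ker a^k \supseteq \ker a^{k-1}\supseteq \cdots \supseteq \ker a \supseteq 0$ — but since we are in a $\Cstar$-algebra and want an honest bounded-operator statement, the cleaner route is to use the corner decomposition coming from a single element. First I would recall the standard fact (a form of the ``Jacobson/Wedderburn'' picture for a single nilpotent) that one can find a not-necessarily-self-adjoint idempotent, or better, consider the unitization and the element $a$ acting by left multiplication; but to keep everything inside $A$ I would instead argue as follows.

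First I would reduce to the case where $A$ is unital, passing to the unitization $\widetilde A$ if necessary (this is harmless since $N_k$ and commutators are unchanged). Next I would use the key algebraic identity: for a nilpotent $a$ of order $k$, set $b = \sum_{j=1}^{k-1} a^j$ — no, the efficient device is the ``weighting'' trick. Consider the element $h$ which is a scalar on the ``graded pieces'' of the nilpotent; concretely, in the algebra $M_k(\C)$ the single Jordan block $J$ of size $k$ satisfies $J = \tfrac{1}{?}[\,D, J\,]$ is false, but $J$ is a commutator (e.g. $J=[D,N]$ for suitable diagonal $D$ and strictly upper triangular $N$). The honest statement I want to exploit is: \emph{in any unital ring, if $a^k=0$ then $a$ is a sum of $k-1$ commutators}, which one proves by peeling off one ``layer'' at a time. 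I would carry this out by induction on $k$: writing $a$, via the idempotent $p$ that is the identity on $\overline{aA}$-type corner, in a $2\times 2$ block form $\begin{pmatrix} a_{11} & a_{12}\\ 0 & a_{22}\end{pmatrix}$ where the strictly-lower block vanishes because $a$ maps the relevant subspace into itself; then the off-diagonal part $\begin{pmatrix}0 & a_{12}\\ 0 & 0\end{pmatrix} = \bigl[\begin{pmatrix}1&0\\0&0\end{pmatrix}, \begin{pmatrix}0&a_{12}\\0&0\end{pmatrix}\bigr]$ is a single commutator, while the diagonal part $\mathrm{diag}(a_{11},a_{22})$ consists of two nilpotents of order $\le k-1$ sitting in the corners $pAp$ and $(1-p)A(1-p)$, hence by induction is a sum of $(k-2)+(k-2)$... which overshoots.

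So the correct bookkeeping, which is what I expect to be the main obstacle, is to choose the decomposition so the counts add up to exactly $k-1$ and not more. The right choice: use the single idempotent $q$ corresponding to the one-step shift, so that $a = qaq' + \text{(off-diagonal)}$ with $qaq$ nilpotent of order $k-1$ in a corner and the remaining rank-one-in-blocks pieces each a single commutator — i.e. peel \emph{one} commutator off per step. Precisely: by induction hypothesis $a' := $ (the compression of $a$ to a corner where its order drops to $k-1$) is a sum of $k-2$ commutators in that corner (hence in $A$), and $a - a'$ is a single commutator of the form $[e, a-a']$ for an idempotent $e$ (since $a-a'$ is ``between two consecutive pieces'' of the filtration, it satisfies $(a-a')e = a-a'$ and $e(a-a')=0$, forcing $a-a' = [e,a-a']$). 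This gives $(k-2)+1 = k-1$ commutators, completing the induction; the base case $k=2$ is exactly the observation that a square-zero element $a$ equals $[e,a]$ for a suitable idempotent $e$ with $ea=0$, $ae=a$ — e.g. take $e$ to be the range projection issue replaced by the algebraic idempotent built from a left inverse on $aA$, or simply note $a = [p, a]$ when $p$ is an idempotent with $pa = a$ and $ap = 0$, which exists since $a^2=0$ lets us take $p = $ the idempotent onto a complement of $\ker a$ containing $aA$. I would cite or reprove the elementary fact that such idempotents exist in a unital ring for any nilpotent (this is where the argument is genuinely ring-theoretic, not $\Cstar$-specific), and reference \cite{marcoux09} for the analogous norm-bounded statements if sharper estimates on the commutators are wanted.
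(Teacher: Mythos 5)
Your plan has the right inductive shape — peel off one commutator per step, dropping the nilpotency order by one — and that is exactly the shape of the paper's proof. But the device you use to peel off a commutator is broken, and the breakage is precisely at the point you flag as ``genuinely ring-theoretic, not $\Cstar$-specific.''

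Your base case ($k=2$) asks for an idempotent $p\in A$ with $pa=a$ and $ap=0$, and more generally your inductive step compresses $a$ to a corner $pAp$ and treats the off-diagonal piece as a single commutator against that idempotent. The claim that such idempotents exist for any nilpotent in any unital ring is false: in a commutative ring with a nonzero nilpotent, no nonzero nilpotent is a sum of commutators at all, so the requisite idempotent cannot exist. More to the point for this paper, a $\Cstar$-algebra need not have any nontrivial idempotents. (An idempotent in a $\Cstar$-algebra is always similar to a projection, so a projectionless unital $\Cstar$-algebra has no idempotents other than $0$ and $1$.) The Jiang–Su algebra $\mathcal Z$ is simple, unital, and projectionless, yet contains nonzero square-zero elements; for such an $a$ there is no idempotent $p$ with $pa=a$, $ap=0$, and your base case already fails. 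So the argument does not go through in the generality the lemma requires.

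The paper avoids this by replacing the idempotent with the polar decomposition $x=v|x|$ in $A^{**}$ and the Aluthge transform $\tilde x=|x|^{1/2}v|x|^{1/2}$. The identity
\[
x=[\,v|x|^{1/2},\,|x|^{1/2}\,]+\tilde x
\]
peels off exactly one commutator, and a functional-calculus computation ($|x|^{1/2}\in\Cstar(x^*x)$ and $(x^*x)x^{k-1}=0$, hence $|x|^{1/2}x^{k-1}=0$) shows $\tilde x$ has nilpotency order at most $k-1$, so induction finishes. The essential $\Cstar$-algebraic point your proposal is missing is that although the partial isometry $v$ lives only in $A^{**}$, the element $v|x|^{1/2}$ does lie in $A$, so the displayed commutator is an honest commutator in $A$. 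This plays the role you wanted the idempotent $e$ to play, but is available in every $\Cstar$-algebra.
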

\begin{proof}
	Let $x$ be a nilpotent of order at most $k$ (i.e., in $\bigcup_{j\leq k} N_j$). 
	Let $x=v|x|$ be the polar decomposition of $x$ in $A^{**}$. Let $\tilde x=|x|^{\frac 1 2}v|x|^{\frac 1 2}$ (the Aluthge transform of $x$). Observe that $x=[v|x|^{\frac 1 2},|x|^{\frac 1 2}]+\tilde x$. Also,
	\[
	\tilde x^{k-1}(\tilde x^{k-1})^*=|x|^{\frac 1 2}x^{k-1} v^*(x^{k-2})^*|x|^{\frac 1 2}=0,
	\] 
	where we have used that $|x|^{\frac 1 2}x^{k-1}=0$ (since  $|x|^{\frac 1 2}\in \Cstar(x^*x)$ and $(x^*x)x^{k-1}=0$). 
	Thus $\tilde x$ is a nilpotent of order at most $k-1$. Continuing this process inductively we arrive at the desired result.
\end{proof}	

For each $k\in \N$ let  $I_k$ denote the intersection of  the kernels of all representations of $A$ of dimension at most $k$.  Notice  that $I_1=\Id([A,A])$ and that $I_1\supseteq I_2\supseteq \dots$.  It is not hard to show that  $I_k$ is the smallest closed two-sided ideal the quotient by which is a  $k$-subhomogeneous $\Cstar$-algebra (i.e., one  whose irreducible representations are at most $k$-dimensional).

\begin{theorem}\label{linNk}
	$\overline{\Lin(N_k)}=\overline{[I_{k-1},A]}$ for all $k\geq 2$.	
\end{theorem}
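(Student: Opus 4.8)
The plan is to prove the two inclusions $\overline{\Lin(N_k)}\subseteq\overline{[I_{k-1},A]}$ and $\overline{[I_{k-1},A]}\subseteq\overline{\Lin(N_k)}$ separately, using Lemma \ref{aluthge} for the first and a representation-theoretic (matrix unit) argument for the second.

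For the inclusion $\overline{\Lin(N_k)}\subseteq\overline{[I_{k-1},A]}$, I would first observe that every $x\in N_k$ lies in $I_{k-1}$: indeed, in any representation $\pi$ of dimension at most $k-1$, $\pi(x)$ is a nilpotent operator on a space of dimension $\leq k-1$, hence $\pi(x)^{k-1}=0$; but $x\in N_k$ means $x^{k-1}\neq 0$ while $x^k=0$, so this does not immediately give $\pi(x)=0$. So this naive approach needs refinement. Instead I would argue that $N_k\subseteq \overline{[I_{k-1},A]}$ directly: by Lemma \ref{aluthge}, $x$ is a sum of $k-1$ commutators, so certainly $x\in\overline{[A,A]}$; the point is to locate these commutators inside $I_{k-1}$. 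Tracking the Aluthge iteration, one can check that each successive commutator $[v_j|x_j|^{1/2},|x_j|^{1/2}]$ involves elements of $\Cstar(x)$ (working in $A^{**}$, then approximating), and one shows $\Cstar(x)\cap A$, or rather the relevant commutators, land in $I_{k-1}$ because in a representation of dimension $\leq k-1$ the nilpotent $\pi(x)$ satisfies $\pi(x)^{k-1}=0$, which forces the Aluthge transform to vanish after $k-2$ steps, collapsing the commutator sum. The cleanest route may be: first prove $\overline{\Lin(N_k)}\subseteq I_{k-1}$ by a direct computation in subhomogeneous quotients (if $\pi$ has dimension $\leq k-1$ then $\pi(N_k)$ generates nothing outside what is already forced), combined with $\overline{\Lin(N_k)}\subseteq\overline{[A,A]}$ from Lemma \ref{aluthge}; then Lemma \ref{bunce} applied to $I=I_{k-1}$ gives $\overline{\Lin(N_k)}\subseteq I_{k-1}\cap\overline{[A,A]}=\overline{[I_{k-1},A]}$.

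For the reverse inclusion $\overline{[I_{k-1},A]}\subseteq\overline{\Lin(N_k)}$, since $\overline{\Lin(N_k)}$ is a closed Lie ideal (it is similarity-invariant, as noted), by Theorem \ref{bresar}(ii) it suffices to show $[I_{k-1},A]\subseteq\overline{\Lin(N_k)}$, or even to compare the two-sided ideals they generate; more precisely I would show $I_{k-1}\subseteq\Id(\Lin(N_k))$ and then apply Lemma \ref{basiclemma} or Theorem \ref{bresar} to pass to commutators. The heart of the matter is: in any representation $\pi$ of $A$ of dimension $\geq k$ (in particular any irreducible representation not killed by $I_{k-1}$), there exist enough nilpotents of order exactly $k$ to generate a large ideal — concretely, a $k\times k$ matrix unit $e_{1k}$ is a nilpotent of order exactly $k$, and such elements, together with their conjugates, generate $M_k$ and hence everything in dimension $\geq k$. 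Globalizing this from representations back to $A$ via the fact that $I_{k-1}$ is the intersection of kernels of $\leq(k-1)$-dimensional representations (so $A/I_{k-1}$ is $(k-1)$-subhomogeneous and its "defect" from being killed is exactly detected by $k$-dimensional or larger pieces) is the technical crux.

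The main obstacle I anticipate is the second inclusion: producing nilpotents of order \emph{exactly} $k$ (not just at most $k$) in sufficient supply inside the ideal $I_{k-1}$, and doing so in a way that survives the passage from pointwise (representation-by-representation) statements to a norm-closed span in $A$ itself. In a given irreducible representation of dimension $d\geq k$ one easily finds order-$k$ nilpotents, but one needs a continuous/uniform choice — e.g. via a careful use of functional calculus, partitions of unity on the spectrum, or the structure theory of subhomogeneous $\Cstar$-algebras — to assemble a genuine element of $A$ whose image is nilpotent of order exactly $k$ in all the relevant representations simultaneously, or at least enough such elements to span a dense subset of $\overline{[I_{k-1},A]}$. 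I would expect the author to handle this either by reducing to the universal $(k-1)$-subhomogeneous quotient and exploiting that $A/I_{k-1}$ has no obstruction, or by a slick commutator identity showing $[I_{k-1},A]$ is already spanned by elements visibly of the form (order-$k$ nilpotent).
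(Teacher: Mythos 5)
Your two-inclusion strategy diverges substantially from the paper's argument and, as written, both halves remain open. The paper never decomposes the claim into two inclusions. It begins by citing the known identity $\Id(N_k)=I_{k-1}$, which reduces the theorem to $\overline{\Lin(N_k)}=\overline{[\Id(N_k),A]}$, and then applies Lemma \ref{basiclemma} to the closed Lie ideal $L=\overline{\Lin(N_k)}$. Checking the hypotheses of that lemma requires only: (a) $L\subseteq\overline{[A,A]}$, which is Lemma \ref{aluthge}; and (b) $\Id(L)=\Id([L,A])$, i.e.\ $N_k\subseteq\Id([N_k,A])$. The crucial observation for (b) is one you do not hit upon: letting $I=\Id([N_k,A])$, the image of any $x\in N_k$ in $A/I$ is central, and the center of a $\Cstar$-algebra, being commutative, contains no nonzero nilpotents, so $x\in I$. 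That one line is the engine of the proof and replaces essentially all of the representation-theoretic machinery you envision.

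As for the gaps in your plan: for the first inclusion you rightly notice that the naive claim $N_k\subseteq I_{k-1}$ is not obvious from a dimension count, but you then assert $\overline{\Lin(N_k)}\subseteq I_{k-1}$ via an unexecuted ``direct computation in subhomogeneous quotients'' or an unexecuted Aluthge-tracking argument; neither is carried out, and both are delicate (the honest move is simply to cite the ideal identity $\Id(N_k)=I_{k-1}$ as the paper does, rather than try to re-derive fragments of it). You do gesture at Lemma \ref{basiclemma} for the reverse inclusion, but you propose to verify $I_{k-1}\subseteq\Id(\Lin(N_k))$, which is not the hypothesis Lemma \ref{basiclemma} needs; the missing hypothesis is $\Id(L)=\Id([L,A])$, and that is exactly what the center-has-no-nilpotents computation supplies. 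Finally, the ``technical crux'' you flag for the second inclusion --- globalizing order-$k$ nilpotents from individual representations to elements of $A$ --- is a real obstruction for your route, but the paper never constructs a single nilpotent: once (a) and (b) are in place, Lemma \ref{basiclemma} delivers both inclusions simultaneously.
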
	
\begin{proof}
	It is well known that $\Id(N_k)=I_{k-1}$ (e.g., see \cite[Lemma 6.1.3]{ara-mathieu}). We must then show that 
	$\overline{\Lin(N_k)}=\overline{[\Id(N_k),A]}$. Let  $I=\Id([N_k,A])$.
	Let $x\in N_k$. Since  $[x ,A]\subseteq I$,  the quotient map sends
	$x$ to the center of $A/I$. But the center, being a commutative $\Cstar$-algebra, cannot contain nonzero nilpotents. 
	Thus, $x\in I$. This shows that  $N_k\subseteq \Id([N_k,A])$. On the other hand, $N_k\subseteq [A,A]$ by Lemma \ref{aluthge}.
	Thus, $\overline{\Lin (N_k)}=\overline{[\Id(N_k),A]}$ by Lemma \ref{basiclemma}.  
\end{proof}

\begin{corollary}\label{linN2}
	$\overline{\Lin (N_2)}=\overline{[A,A]}$.	
\end{corollary}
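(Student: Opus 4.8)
The plan is to read this off from Theorem \ref{linNk} together with Theorem \ref{bresar}. Taking $k=2$ in Theorem \ref{linNk} gives $\overline{\Lin(N_2)}=\overline{[I_1,A]}$, and since $I_1=\Id([A,A])$ by the definition of the ideals $I_k$ recalled just before that theorem, this says $\overline{\Lin(N_2)}=\overline{[\Id([A,A]),A]}$. Now apply Theorem \ref{bresar}(ii) with the Lie ideal $L=A$: it yields $\overline{[\Id([A,A]),A]}=\overline{[A,A]}$. Combining the two identities gives the claim.

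Alternatively, one can bypass the explicit appeal to Theorem \ref{linNk} and argue directly along the same lines as its proof. The inclusion $\overline{\Lin(N_2)}\subseteq\overline{[A,A]}$ is immediate from Lemma \ref{aluthge}, since for $k=2$ every element of $N_2$ is a single commutator. For the reverse inclusion, $\overline{\Lin(N_2)}$ is a closed Lie ideal of $A$ (as $N_2$ is invariant under similarity), and $\Id(N_2)=I_1=\Id([A,A])$; one then checks $N_2\subseteq\Id([N_2,A])$ exactly as in Theorem \ref{linNk}, because a square-zero element whose image is central modulo $\Id([N_2,A])$ must vanish, there being no nonzero nilpotents in a commutative $\Cstar$-algebra. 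Thus Lemma \ref{basiclemma} applies to $L=\overline{\Lin(N_2)}$ and gives $\overline{\Lin(N_2)}=\overline{[\Id(N_2),A]}=\overline{[\Id([A,A]),A]}$, and Theorem \ref{bresar}(ii) finishes as before.

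There is no genuine obstacle here: the statement is the $k=2$ specialization of Theorem \ref{linNk} combined with the $L=A$ case of Theorem \ref{bresar}(ii), both of which are already established. The only point worth keeping in mind is the identification $I_1=\Id([A,A])$, i.e.\ that the intersection of the kernels of the one-dimensional representations of $A$ is precisely the closed two-sided ideal generated by the commutators.
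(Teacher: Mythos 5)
Your proposal is correct and matches the paper's proof exactly: the paper also specializes Theorem \ref{linNk} to $k=2$ to get $\overline{\Lin(N_2)}=\overline{[\Id([A,A]),A]}$ and then applies Theorem \ref{bresar}(ii) with $L=A$. Your alternative "unpacking" of Theorem \ref{linNk} via Lemma \ref{basiclemma} is just the same argument inlined, so there is nothing genuinely different here.
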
	
\begin{proof}
	The previous theorem implies that  $\overline{\Lin (N_2)}=\overline{[\Id([A,A]),A]}$. On the other hand,  $\overline{[\Id([A,A]),A]}=\overline{[A,A]}$, by Theorem \ref{bresar} (ii)
	applied with $L=A$.
\end{proof}

The following corollary is merely a restatement of Corollary \ref{linN2}
\begin{corollary}
	A  positive bounded functional on $A$ is a trace if and only if it vanishes on $N_2$. 
\end{corollary}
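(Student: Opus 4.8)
The plan is to unwind both directions using Corollary \ref{linN2}, which tells us that $\overline{\Lin(N_2)} = \overline{[A,A]}$. The key algebraic fact is that a positive bounded functional $\tau$ on $A$ is a trace precisely when $\tau([x,y]) = 0$ for all $x,y \in A$, i.e.\ when $\tau$ vanishes on $[A,A]$; since $\tau$ is bounded and linear, this is equivalent to $\tau$ vanishing on $\overline{[A,A]}$.

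For the forward direction, suppose $\tau$ is a trace. Then $\tau$ vanishes on $[A,A]$, hence on its closure $\overline{[A,A]}$. By Lemma \ref{aluthge} (the case $k=2$), every element of $N_2$ is a single commutator, so $N_2 \subseteq [A,A]$ and therefore $\tau$ vanishes on $N_2$. (Alternatively, one invokes Corollary \ref{linN2} directly: $N_2 \subseteq \overline{\Lin(N_2)} = \overline{[A,A]} \subseteq \ker\tau$.)

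For the converse, suppose $\tau$ is a positive bounded functional vanishing on $N_2$. Then $\tau$ vanishes on $\Lin(N_2)$ by linearity, and on $\overline{\Lin(N_2)}$ by continuity. By Corollary \ref{linN2}, $\overline{\Lin(N_2)} = \overline{[A,A]} \supseteq [A,A]$, so $\tau([x,y]) = 0$ for all $x,y \in A$, i.e.\ $\tau(xy) = \tau(yx)$; thus $\tau$ is a trace.

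There is essentially no obstacle here, since the statement is indeed ``merely a restatement'' of Corollary \ref{linN2} once one records the standard equivalence between the trace property of a bounded functional and its vanishing on commutators. The only mild point worth stating explicitly is the passage from vanishing on $[A,A]$ to vanishing on $\overline{[A,A]}$ and back, which is immediate from continuity and the density of $[A,A]$ in its closure.
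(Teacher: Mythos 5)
Your proof is correct and takes exactly the approach the paper intends: the paper offers no argument beyond remarking that the corollary is ``merely a restatement'' of Corollary \ref{linN2}, and your write-up simply spells out that restatement via the standard equivalence between being a trace and vanishing on $[A,A]$ (hence on $\overline{[A,A]}$), together with $\overline{\Lin(N_2)}=\overline{[A,A]}$.
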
	

\begin{question}\label{linN2question}
	Is $[A,A]=\Lin(N_2)$? Is $\Lin(N_2)$ a Lie ideal?
\end{question}
We will return to these questions in Section \ref{simspan}.

Combining Corollary \ref{linN2} and Theorem \ref{LieAA} of the previous section we can prove the following 
$\Cstar$-algebraic version of a theorem of Amitsur for simple rings (\cite[Theorem 1]{amitsur}):

\begin{theorem}\label{closedinvariance}
	A closed subspace  $U$ of $A$ is a Lie ideal if and only if 
	$(1+x)U(1-x)\subseteq U$ for all $x\in N_2$.
\end{theorem}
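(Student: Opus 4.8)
The plan is to prove both implications by relating the condition $(1+x)U(1-x) \subseteq U$ for $x \in N_2$ to the Lie ideal condition. For the easy direction, suppose $U$ is a closed Lie ideal of $A$ and let $x \in N_2$, so $x^2 = 0$. Then for $u \in U$ we compute $(1+x)u(1-x) = u + xu - ux - xux = u + [x,u] - xux$. Now $[x,u] \in U$ since $U$ is a Lie ideal. The remaining term $xux$ needs to be handled: I would write $xux = x[u,x] + xxu = x[u,x]$ (using $x^2=0$), and then $x[u,x] = [x,[u,x]] + [u,x]x$... this does not immediately land in $U$. A cleaner route: $-xux = -x(ux) = x(xu - ux) - x(xu) = x[x,u]$, wait $x^2 u = 0$, so $-xux = x[x,u] = [x,[x,u]] + [x,u]x$; both summands involve $[x,u] \in U$, but $[x,u]x$ and $[x,[x,u]]$ — the first is not obviously in $U$. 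Instead I will argue directly that $x U x \subseteq U$: note $xux = \frac{1}{2}\big([x,[x,u]] + \{x^2, u\}\big)$ — no, let me just use $[x,[x,u]] = x^2 u - 2xux + ux^2 = -2xux$ when $x^2 = 0$, so $xux = -\tfrac12 [x,[x,u]] \in U$ (applying the Lie ideal property twice, and using $2$-torsion-freeness of the $\Cstar$-algebra). Hence $(1+x)u(1-x) = u + [x,u] - xux \in U$.

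For the converse, suppose $U$ is a closed subspace with $(1+x)U(1-x) \subseteq U$ for all $x \in N_2$. The strategy is: replacing $x$ by $tx$ for a scalar parameter $t$ (noting $tx \in N_2$ when $x \in N_2$), expand $(1+tx)u(1-tx) = u + t[x,u] - t^2 xux \in U$ for all $t$, and then use that $U$ is closed to extract, by a standard difference-quotient / polynomial-coefficient argument, that $[x,u] \in U$ and $xux \in U$ for every $x \in N_2$ and $u \in U$. So $[N_2, U] \subseteq U$; i.e., $U$ is a Lie ideal of $\Lin(N_2)$, and hence of $\overline{\Lin(N_2)}$ since $U$ is closed and the bracket is continuous. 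But $\overline{\Lin(N_2)} = \overline{[A,A]}$ by Corollary \ref{linN2}, so $U$ is a closed Lie ideal of $\overline{[A,A]}$, and $U \subseteq A$. At this point I want to invoke Theorem \ref{LieAA}: a closed Lie ideal of $[A,A]$ is a Lie ideal of $A$. There is a small gap — Theorem \ref{LieAA} is stated for closed Lie ideals of $[A,A]$, whereas I have a closed Lie ideal of $\overline{[A,A]}$; but a closed Lie ideal of $\overline{[A,A]}$ is in particular a (closed) Lie ideal of the dense subalgebra $[A,A]$ in the sense $[U, [A,A]] \subseteq U$, so Theorem \ref{LieAA} applies and gives $[U,A] \subseteq U$, as desired.

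The main obstacle I anticipate is the converse direction's extraction step: justifying rigorously that membership of $u + t[x,u] - t^2 xux$ in the closed subspace $U$ for all real (or complex) $t$ forces each coefficient $[x,u]$ and $xux$ into $U$. The clean way is to take three values $t = 0, 1, -1$ (or use $t, -t$) to solve a small linear system: from $t=1$ and $t=-1$ we get $u + [x,u] - xux \in U$ and $u - [x,u] - xux \in U$, whose difference gives $2[x,u] \in U$, hence $[x,u] \in U$ (again using that $A$ has no $2$-torsion, so division by $2$ is harmless), and whose sum gives $2u - 2xux \in U$, hence $xux \in U$ since $u \in U$. This avoids any limiting argument entirely and only uses that $U$ is a subspace — the closedness of $U$ is then needed solely to pass from $[N_2, U] \subseteq U$ to the Lie-ideal-of-$\overline{[A,A]}$ conclusion and to apply Theorem \ref{LieAA}. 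A secondary point to get right is confirming $tx \in N_2$ for $t \neq 0$ and $x \in N_2$ (immediate, since $(tx)^2 = t^2 x^2 = 0$ and $tx \neq 0$), and handling $t=0$ trivially. I do not expect the scalar to need to be complex; real scalars suffice for the linear algebra, and $N_2$ is closed under real scaling.
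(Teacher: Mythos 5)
Your proposal is correct and takes essentially the same approach as the paper: the forward direction uses the identity $(1+x)u(1-x)=u+[x,u]+\tfrac12[x,[x,u]]$ (equivalently, $xux=-\tfrac12[x,[x,u]]$ when $x^2=0$), and the reverse direction deduces $[U,N_2]\subseteq U$ by comparing the conditions at $x$ and $-x$, then passes to $\overline{\Lin(N_2)}=\overline{[A,A]}$ via Corollary \ref{linN2} and invokes Theorem \ref{LieAA}. The worry you raise about Theorem \ref{LieAA} is not a gap: the paper's definition of ``Lie ideal of $[A,A]$'' is precisely $[U,[A,A]]\subseteq U$, which you have, so the theorem applies directly.
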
	 
\begin{proof}
	Say $U$ is a Lie ideal.  Let $u\in U$ and $x\in N_2$. Then 
	\[
	(1+ x)u(1- x)=u+[x,u]+\frac{1}{2}[x,[x,u]]\in U.
	\]
	
	Suppose now that $(1+x)U(1-x)\subseteq U$ for all $x\in N_2$.	Let $u\in U$ and  $x\in N_2$. 
	Then
	\begin{align*}
	[x,u]-xux &=(1+ x)u(1- x)-u\in U,\\
	[x,u]+xux &=-(1- x)u(1+ x)+u\in U.
	\end{align*} 
	Hence $[u,x]\in U$. That is, $[U,N_2]\subseteq U$. Passing to the span of $N_2$
	and taking closure we get from Corollary \ref{linN2} that $[U,[A,A]]\subseteq U$. That is, $U$ is a closed Lie ideal of $[A,A]$. By Theorem \ref{LieAA}, $U$ is a Lie ideal of $A$.
\end{proof}

Let $f(x_1,\dots,x_n)$ be a polynomial in noncommuting variables with coefficients in $\C$.
Let us denote by $f(A,\dots,A)$, or $f(A)$ for short, the range of $f$ on $A$. (If $A$ is non-unital we assume that $f$ has no independent term.) Since the set  $f(A)$ is invariant by similarity, $\overline{\Lin(f(A))}$ is  a Lie ideal. It is shown in \cite[Theorem 2.3]{bresar-klep1} that even
$\Lin(f(A))$ is Lie ideal.

In the sequel by a polynomial we always understand a polynomial in noncommuting variables with coefficients in $\C$.

Recall that for each $k\in \N$ we let  $I_k$ denote the intersection of  the kernels of all representations of $A$ of dimension at most $k$.  
In the following theorem we use the conventions $I_0=A$ and $M_0(\C)=\{0\}$. We regard every polynomial as an identity on $M_0(\C)$. By a nonconstant polynomial we mean one  with positive degree  in at least one of its variables.

\begin{theorem}\label{polylie}
	Let $f$ be a nonconstant polynomial. Suppose that  $f(A)\subseteq \overline{[A,A]}$. Then
	$\overline{\Lin  (f(A))} =\overline{[I_k,A]}$, where  $k\geq 0$ is the largest number such that $f$ is an identity on $M_{k}(\C)$ (such a number must exist since no polynomial is an identity on all matrix algebras).
\end{theorem}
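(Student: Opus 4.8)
The plan is to apply Lemma \ref{basiclemma} to the closed Lie ideal $L:=\overline{\Lin(f(A))}$ (which is a Lie ideal already before taking closure, by \cite[Theorem 2.3]{bresar-klep1}). Two hypotheses must be checked: $L\subseteq\overline{[A,A]}$, which is exactly the standing assumption $f(A)\subseteq\overline{[A,A]}$, and $\Id(L)=\Id([L,A])$. Granting these, Lemma \ref{basiclemma} gives $L=\overline{[\Id(L),A]}$, so it will remain only to identify $\Id(L)=\Id(f(A))$ with $I_k$. One may assume $f$ has no independent term: if $A$ is unital and the independent term $c1$ of $f$ is nonzero, then $c1\in f(A)\subseteq\overline{[A,A]}$ forces $1\in\overline{[A,A]}$, so $A$ has no bounded traces and $k=0$; replacing $f$ by $f-c1$ and using that a nonzero unital subhomogeneous $\Cstar$-algebra has a tracial state, one gets $\overline{\Lin(f(A))}=\overline{[A,A]}=\overline{[I_0,A]}$ directly.

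First I would dispose of the inclusion $\Id(f(A))\subseteq I_k$. Since $f$ is an identity on $M_k(\C)$ and has no independent term, evaluation on the corner embedding $M_j(\C)\hookrightarrow M_k(\C)$ shows that $f$ is an identity on $M_j(\C)$ for all $j\le k$; hence $f$ annihilates every representation of $A$ of dimension at most $k$ (it suffices to consider irreducible ones), so $f(A)\subseteq I_k$. For the reverse inclusion---and simultaneously for $\Id(L)=\Id([L,A])$---I would pass, as in the proofs of Theorems \ref{commutator} and \ref{bresar}, to $B:=A/\Id([L,A])$ with quotient map $q$. Since $[L,A]\subseteq\Id([L,A])$, the image $q(L)$ is central in $B$, so
\[
f(B)=q(f(A))\subseteq q(L)\subseteq Z(B)\cap\overline{[B,B]}.
\]
The key observation is that $g(x_1,\dots,x_n,y):=f(x_1,\dots,x_n)\,y-y\,f(x_1,\dots,x_n)$ is a \emph{nonzero} polynomial---because the nonconstant $f$ is not a scalar---which vanishes identically on $B$, since $f(B)\subseteq Z(B)$. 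Thus $B$ is a $\Cstar$-algebra satisfying a polynomial identity, and so, by Kaplansky's theorem, $B$ is subhomogeneous; in particular each irreducible representation $\pi$ of $B$ is finite dimensional with $\pi(B)=M_n(\C)$, $n=\dim\pi$.

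For such a $\pi$, the set $f(M_n(\C))=\pi(f(B))$ lies in $\pi(Z(B))\subseteq\C\cdot 1$ and also in $\overline{[\pi(B),\pi(B)]}=\mathfrak{sl}_n(\C)$; as a nonzero scalar matrix is not traceless, $f(M_n(\C))=\{0\}$. By the maximality defining $k$ this forces $n\le k$ (an identity on $M_n(\C)$ with $n\ge k+1$ would restrict, via the corner embedding, to one on $M_{k+1}(\C)$). Hence every irreducible representation of $B=A/\Id([L,A])$ has dimension at most $k$, i.e. $B$ is $k$-subhomogeneous, and therefore $I_k\subseteq\Id([L,A])$ since $I_k$ is the smallest closed two-sided ideal of $A$ with $k$-subhomogeneous quotient. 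Combining the inclusions,
\[
I_k\subseteq\Id([L,A])\subseteq\Id(L)=\Id(f(A))\subseteq I_k,
\]
so all are equal; in particular $\Id(L)=\Id([L,A])=I_k$, and Lemma \ref{basiclemma} gives $L=\overline{[\Id(L),A]}=\overline{[I_k,A]}$.

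The step I expect to be the main obstacle is the passage from ``$f(B)\subseteq Z(B)$'' to ``the irreducible representations of $B$ are finite dimensional'': this is exactly where Kaplansky's PI-theorem enters, applied to the auxiliary identity $g$, and it is the ingredient absent from the nilpotent case of Theorem \ref{linNk}, where centrality was by itself contradictory (the centre of a $\Cstar$-algebra has no nonzero nilpotents, whereas it may well contain the values of a nonconstant central polynomial). A more hands-on alternative would rule out infinite-dimensional irreducible representations of $B$ using Kadison transitivity, but this needs care because compression to a finite-dimensional subspace is not a homomorphism. The remaining point requiring attention is the bookkeeping around a nonzero independent term of $f$, handled above.
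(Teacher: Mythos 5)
Your proof is correct and follows essentially the same route as the paper's: pass to $B=A/\Id([L,A])$, observe that $B$ satisfies the nontrivial identity $[f,y]$ so is subhomogeneous by Kaplansky's theorem, use that the centre of $M_n(\C)$ meets the traceless matrices trivially to conclude $f$ vanishes on every irreducible representation of $B$, and then invoke the maximality of $k$ together with Lemma \ref{basiclemma}. The only differences are organizational: you make the corner-embedding argument explicit, you deduce $I_k\subseteq\Id([L,A])$ and $\Id(f(A))\subseteq I_k$ and chain them rather than first proving $f(B)=0$ outright, and you set aside the nonzero-constant-term case at the start (the paper's argument in fact covers it without a separate reduction, since $f(B)=0$ with $f(0)\neq0$ already forces $B=0$, i.e.\ $\Id([L,A])=A=I_0$).
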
	
\begin{proof}
	Let $I=\Id([f(A),A])$. Then $A/I$ is a subhomogeneous $\Cstar$-algebra, since it satisfies the (nontrivial) polynomial identity $[f(x_1,\dots,x_n),y]$ (see \cite[Proposition IV.1.4.6]{blackadar}). The range of $f$ on $A/I$ is both in the center of $A/I$ and  in
	$\overline{[A/I,A/I]}$, as $f(A)\subseteq \overline{[A,A]}$. But in a subhomogeneous $\Cstar$-algebra the center and the closure of the span of the commutators have zero intersection (since this is true in every finite dimensional representation). Hence, $f(A/I)=\{0\}$; i.e., $f(A)\subseteq I$. Thus, 
	$\Id(f(A))=I=\Id([f(A),A])$. By assumption, we also have that  $f(A)\subseteq \overline{[A,A]}$. It follows that  $\overline{\Lin (f(A))}=\overline{[I,A]}$ by Lemma \ref{basiclemma}. 
	
	Let us now show that $I=I_k$, with $k\geq 0$ as in the statement of the theorem.
	Let $\pi\colon A\to M_l(\C)$ be a representation of $A$ with $l\leq k$. By assumption, $f(M_l(\C))=\{0\}$. Hence, $f(A)\subseteq \ker \pi$, and so $I=\Id(f(A))\subseteq \ker \pi$. Since, by definition,  $I_k$ is the intersection of the kernels of all such $\pi$, we get that $I\subseteq I_k$. To prove the opposite inclusion notice first that  $A/I$ must be a $k$-subhomogeneous  $\Cstar$-algebra. For suppose that there exists an irreducible representation $\pi\colon A/I\to M_{m}(\C)$,  with  $m>k$.
	Since  $f$ is an identity  on   $A/I$ and $\pi$ is onto,  we get that $f$ is an identity on $M_m(\C)$. This contradicts  our choice of $k$. Hence, every irreducible representation of $A/I$ has dimension at most $k$; i.e., $A/I$ is $k$-subhomogeneous.
	Since   $I_k$ may be alternatively described as the smallest closed two-sided ideal the quotient by which is $k$-subhomogeneous, $I_k\subseteq I$.
\end{proof}

Let $s_k$ denote the standard polynomial in $k$ noncommuting variables. That is,
\[
s_k(x_1,\dots,x_k)=\sum_{\sigma\in S_k} \mathrm{sign}(\sigma)x_{\sigma(1)}\cdots x_{\sigma(k)},
\] 
where $S_k$ denotes the symmetric group on $k$ elements. The Amitsur-Levitzky theorem states that   $s_{2k}$ is a polynomial identity of minimal degree on $M_k(\C)$
(\cite{amitsur-levitzky}).
Define $\pi_1(x,y)=[x,y]$ and 
\[
\pi_{k+1}(x_1,\dots,x_{2^{k+1}})=
[\pi_k(x_1,\dots,x_{2^k}),\pi_k(x_{2^k+1},\dots,x_{2^{k+1}})]
\]
for all $k\geq 1$. The following two special cases of the previous theorem are worth remarking upon:

\begin{corollary}
	$\overline{\Lin (\sigma_{2k}(A))}=\overline{[I_k,A]}$	and $\overline{\Lin(\pi_k(A))}=\overline{[A,A]}$ for all $k\geq 1$.
\end{corollary}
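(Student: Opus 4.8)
Both identities are special cases of Theorem \ref{polylie}, so the plan is, for each of $f=s_{2k}$ and $f=\pi_k$, to (a) check the hypothesis $f(A)\subseteq\overline{[A,A]}$ and (b) determine the largest integer $j$ such that $f$ is an identity on $M_j(\C)$.

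For $f=\pi_k$ the hypothesis (a) is immediate: $\pi_1(x,y)=[x,y]$, and for $k\geq 1$ the value $\pi_{k+1}(x_1,\dots,x_{2^{k+1}})$ is by definition a single commutator, so $\pi_k(A)\subseteq[A,A]\subseteq\overline{[A,A]}$. For (b), $\pi_k$ is an identity on every commutative algebra, hence on $M_1(\C)=\C$ (and on $M_0(\C)$); so it suffices to check that $\pi_k$ is \emph{not} an identity on $M_2(\C)$. I would prove this by induction on $k$, showing that $\pi_k$ attains on suitable matrices in $M_2(\C)$ a nonzero scalar multiple of each of the matrix units $e_{12}$, $e_{21}$ and of $e_{11}-e_{22}$; the inductive step only uses the brackets $[e_{11}-e_{22},e_{12}]=2e_{12}$, $[e_{11}-e_{22},e_{21}]=-2e_{21}$ and $[e_{12},e_{21}]=e_{11}-e_{22}$, while the base case $k=1$ is $[e_{11},e_{12}]=e_{12}$, $[e_{11},e_{21}]=-e_{21}$, $[e_{12},e_{21}]=e_{11}-e_{22}$. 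Therefore the integer in Theorem \ref{polylie} is $j=1$, giving $\overline{\Lin(\pi_k(A))}=\overline{[I_1,A]}$; since $I_1=\Id([A,A])$ and $\overline{[\Id([A,A]),A]}=\overline{[A,A]}$ by Theorem \ref{bresar}(ii) applied with $L=A$, this is $\overline{[A,A]}$.

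For $f=s_{2k}$, I would first obtain (b) from the Amitsur--Levitzky theorem: $s_{2k}$ is an identity on $M_k(\C)$ (hence on $M_j(\C)$ for all $j\leq k$), while $2k$ is the minimal degree of a polynomial identity of $M_{k+1}(\C)$, so, as $\deg s_{2k}=2k<2(k+1)$, $s_{2k}$ is not an identity on $M_{k+1}(\C)$; the relevant integer is thus exactly $k$. For (a) I would show directly that $s_{2k}(a_1,\dots,a_{2k})\in[A,A]$ for all $a_i\in A$: partition the $(2k)!$ monomials of $s_{2k}$ into the $(2k-1)!$ orbits under cyclic rotation of the word, and observe that cyclic rotation corresponds to multiplying the indexing permutation by a $(2k)$-cycle, which is odd; hence the sign alternates along each orbit, and the contribution of the orbit through a word $z_1z_2\cdots z_{2k}$ is $\pm\sum_{j=0}^{2k-1}(-1)^j\rho^j(z_1z_2\cdots z_{2k})$, where $\rho$ denotes cyclic rotation. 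Pairing consecutive terms and using the identity $u_1u_2\cdots u_{2k}-u_2\cdots u_{2k}u_1=[u_1,u_2\cdots u_{2k}]$, this is a sum of $k$ commutators; so $s_{2k}(A)\subseteq[A,A]\subseteq\overline{[A,A]}$, and Theorem \ref{polylie} yields $\overline{\Lin(s_{2k}(A))}=\overline{[I_k,A]}$.

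The only point that is not purely formal is verifying that $\pi_k$ is not an identity on $M_2(\C)$; the induction above handles it, but one must pick the right inductive invariant --- a spanning set of pairwise non-commuting trace-zero $2\times 2$ matrices that is closed, up to nonzero scalars, under the brackets appearing in the recursion --- so that the inductive step goes through.
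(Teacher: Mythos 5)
Your proof is correct and follows the paper's strategy exactly: invoke Theorem \ref{polylie} and verify its two hypotheses for $f=s_{2k}$ and $f=\pi_k$. Where you differ is in how those hypotheses are established. For $s_{2k}(A)\subseteq[A,A]$ the paper simply states that $s_{2k}$ being a sum of commutators in the free algebra is ``well known,'' whereas you supply a direct proof: partition the $(2k)!$ monomials into cyclic orbits of length $2k$, observe that multiplication by a $(2k)$-cycle (an odd permutation) makes the sign alternate along each orbit, and then pair consecutive terms into commutators; that is a clean, self-contained substitute. For the fact that $\pi_k$ is not an identity on $M_2(\C)$ the paper cites Herstein's theorem that a semiprime ring without $2$-torsion satisfying $\pi_k$ must be commutative, while you give an explicit induction with the matrix units $e_{12}$, $e_{21}$, $e_{11}-e_{22}$, which works and avoids the reference. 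You also spell out the step $\overline{[I_1,A]}=\overline{[A,A]}$ via Theorem \ref{bresar}(ii), which the paper leaves implicit. One small slip: in your Amitsur--Levitzky sentence you wrote that ``$2k$ is the minimal degree of a polynomial identity of $M_{k+1}(\C)$''; it should be $2(k+1)$, as your own inequality $\deg s_{2k}=2k<2(k+1)$ makes clear, so the conclusion is unaffected.
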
	
\begin{proof}
	Let $k\in \N$. It is well known that  $s_{2k}$ is expressible as a sum of commutators in the algebra of polynomials in $2k$ noncommuting variables. Hence, $s_{2k}(A)\subseteq [A,A]$. We can thus apply  Theorem \ref{polylie} to   $s_{2k}$. By the Amitsur-Levitsky theorem,  $s_{2k}$ is a polynomial identity of $M_k(\C)$ but not of $M_{k+1}(\C)$. Thus, by Theorem \ref{polylie},     $\overline{\Lin (\sigma_{2k}(A))}=\overline{[I_k,A]}$.
	
	The polynomial  $\pi_k$ is an identity on $\C$ but not on $M_2(\C)$. (In fact, by \cite[Theorem 2]{herstein2}, if $\pi_k$ is a polynomial identity on a semiprime ring without 2-torsion then the ring
	must be commutative.)	Thus, by Theorem \ref{polylie},  $\overline{\Lin(\pi_k(A))}=\overline{[A,A]}$.
\end{proof}

Let's  now give a characterization of the polynomials whose range is contained in $\overline{[A,A]}$.
Following \cite{bresar-klep1},  we say that two polynomials $f$ and $g$ (in noncommuting variables, with coefficients in $\C$) are cyclically equivalent if $f-g$
is a sum of commutators in the ring $\C(X_1,X_2,\dots)$ of polynomials in noncommuting variables.
If a polynomial is cyclically equivalent to $0$ then its range is clearly in $\overline{[A,A]}$. On the other hand, if $A$ has no bounded traces then $A=\overline{[A,A]}$ (see \cite{cuntz-pedersen}) and so any polynomial has range in $\overline{[A,A]}$. The general case is a mixture of these two.  In the following theorem  we maintain  the conventions that $I_0=A$ ,  $M_0(\C)=\{0\}$, and that every polynomial is an identity on $M_0(\C)$.

\begin{theorem}
	Let $k\geq 0$ be the smallest number such that the closed two-sided ideal $I_k$ has no bounded traces
	(set $k=\infty$ if this is never the case). Let $f$ be a nonconstant polynomial.
	\begin{enumerate}[(i)]
		\item	 
		If $k=\infty$ then
		$f(A)\subseteq \overline{[A,A]}$  
		if and only if $f$ cyclically equivalent to 0. 
		
		\item
		If $k<\infty$ then     $f(A)\subseteq \overline{[A,A]}$ 
		if and only if $f$ is  cyclically equivalent to a polynomial identity on $M_k(\C)$. 
	\end{enumerate} 
\end{theorem}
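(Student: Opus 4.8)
My plan is to reduce $f(A)\subseteq\overline{[A,A]}$ to a condition on the ranges of $f$ on matrix algebras, where the governing principle is a tracial Nullstellensatz. First I would invoke the Cuntz--Pedersen theorem \cite{cuntz-pedersen} to write $\overline{[A,A]}=\{a\in A:\tau(a)=0\text{ for every bounded trace }\tau\text{ on }A\}$; denote by $T(B)$ the cone of bounded positive traces on a $\Cstar$-algebra $B$. Two quick reductions dispose of degenerate cases. If $k=0$ then $T(A)=\{0\}$, so $\overline{[A,A]}=A$ and the stated equivalence is trivial (every polynomial is an identity on $M_0(\C)=\{0\}$); so I may assume $k\geq 1$, whence $T(A)=T(I_0)\neq\{0\}$, and then, if $A$ is unital, $f$ must have zero constant term, since otherwise $f(0,\dots,0)$ is a nonzero multiple of $1$ (so $f(A)\not\subseteq\overline{[A,A]}$ as $A$ has a tracial state) while $f$ is cyclically equivalent neither to $0$ nor to a polynomial identity of $M_k(\C)$ (such an identity would carry the same nonzero constant term, impossible for $k\geq 1$), so both sides of the equivalence fail. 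Next, for a fixed $\tau\in T(A)$ with GNS representation $\pi_\tau$, I would pass to $M_\tau:=\pi_\tau(A)''$, a finite von Neumann algebra with faithful normal trace $\bar\tau$, and use Kaplansky's density theorem together with the strong-$*$ continuity of polynomial evaluation on bounded sets and the normality of $\bar\tau$ to upgrade $\bar\tau(f(\pi_\tau(\vec a)))=\tau(f(\vec a))$ to the equivalence: $f(A)\subseteq\overline{[A,A]}$ if and only if $f(M_\tau)\subseteq\ker\bar\tau$ for every $\tau\in T(A)$. I would also note that if $\tau|_{I_\ell}=0$ then $\pi_\tau(A)\cong A/\ker\pi_\tau$ is a quotient of the $\ell$-subhomogeneous algebra $A/I_\ell$, hence $M_\tau$ is $\ell$-subhomogeneous (i.e.\ satisfies $s_{2\ell}$).

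The arithmetic engine is the tracial Nullstellensatz of Bre\v{s}ar--Klep \cite{bresar-klep1}: for each $m\geq 1$, the unnormalized trace $\mathrm{Tr}_m(f(\vec b))$ vanishes for all $\vec b\in M_m(\C)^n$ if and only if $f$ is cyclically equivalent to a polynomial identity of $M_m(\C)$. I need two corollaries: (a) a polynomial identity $g$ of $M_m(\C)$ is an identity of $M_j(\C)$ for all $j\leq m$, hence of $M_j(R)$ for every commutative $\C$-algebra $R$; and (b) the easy half, that if $m\geq\deg f$ and $\mathrm{Tr}_m(f)\equiv 0$ on $M_m(\C)$ then $f$ is cyclically equivalent to $0$ (this follows from the linear independence of the multilinear functionals $\vec b\mapsto\mathrm{Tr}_m(b_{\sigma(1)}\cdots b_{\sigma(d)})$ on $M_m(\C)^d$ when $m\geq d$).

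For sufficiency: in case (ii) write $f=g+c$ with $g$ a polynomial identity of $M_k(\C)$ and $c$ a sum of commutators of the free algebra; given $\tau\in T(A)$, minimality of $k$ gives $T(I_k)=\{0\}$, so $\tau|_{I_k}=0$, so $M_\tau$ is $k$-subhomogeneous, i.e.\ $M_\tau=\bigoplus_{j\leq k}M_j(\C)\,\bar\otimes\,Z_j$ with $Z_j$ abelian, and by (a) $g(M_\tau)=0$, whence $f(M_\tau)=c(M_\tau)\subseteq[M_\tau,M_\tau]\subseteq\ker\bar\tau$; the reduction gives $f(A)\subseteq\overline{[A,A]}$. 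In case (i), $f$ cyclically equivalent to $0$ places $f(A)$ inside $[A,A]\subseteq\overline{[A,A]}$ directly. For necessity, suppose $f(A)\subseteq\overline{[A,A]}$ and set $d:=k$ in case (ii), $d:=\deg f$ in case (i); then $T(I_{d-1})\neq\{0\}$ (by minimality of $k$ in (ii); because $T(I_j)\neq\{0\}$ for all $j$ in (i)). Fix a tracial state $\rho$ on $I_{d-1}$. Since $f$ has no constant term, for $\vec a\in I_{d-1}^{\,n}$ we get $f(\vec a)\in I_{d-1}\cap\overline{[A,A]}=\overline{[I_{d-1},I_{d-1}]}$ by Lemma \ref{bunce}, hence $\rho(f(\vec a))=0$. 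Passing to the GNS data of $\rho$ over $I_{d-1}$, with finite von Neumann algebra $M_\rho$ and faithful normal trace $\bar\rho$, the same Kaplansky-density argument upgrades this to $\bar\rho(f(\vec m))=0$ for all $\vec m\in M_\rho^n$. Now $M_\rho$ is not $(d-1)$-subhomogeneous: every irreducible representation of $I_{d-1}$ extends to one of $A$, which has dimension $\geq d$ because $A/I_{d-1}$ is $(d-1)$-subhomogeneous, so $M_\rho$ satisfying $s_{2(d-1)}$ would force $\pi_\rho(I_{d-1})=0$ and $\rho=0$. A finite von Neumann algebra not satisfying $s_{2(d-1)}$ has a type $\mathrm{I}_m$ summand with $m\geq d$ or a type $\mathrm{II}_1$ summand; either way, by faithfulness of $\bar\rho$, it contains a $*$-subalgebra $N\cong M_d(\C)$ on which $\bar\rho$ restricts to a positive multiple of $\mathrm{Tr}_d$. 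Since $f$ has no constant term, $f(N^n)\subseteq N$, so $\mathrm{Tr}_d(f(\vec b))=0$ for all $\vec b\in N\cong M_d(\C)$. In case (ii) the tracial Nullstellensatz then says $f$ is cyclically equivalent to a polynomial identity of $M_k(\C)$; in case (i), corollary (b) says $f$ is cyclically equivalent to $0$.

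The main obstacle is the nontrivial half of the tracial Nullstellensatz --- a result of polynomial-identity/invariant theory rather than of $\Cstar$-algebras --- so I would want to pin down its precise statement and reference. The second genuinely structural point is extracting the matrix subalgebra $N$ with compatible trace from a non-subhomogeneous finite von Neumann algebra, and the Kaplansky-density transfers must be set up carefully (bounded nets, strong-$*$ continuity on bounded sets, normality of the traces). Everything else --- the interplay of the ideals $I_k$, Lemma \ref{bunce}, and the Cuntz--Pedersen description of $\overline{[A,A]}$ --- is routine bookkeeping.
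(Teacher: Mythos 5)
Your route is genuinely different from the paper's and, modulo the points you flag, it does work. The paper does \emph{not} invoke a tracial Nullstellensatz as a black box; it re-runs the Bre\v{s}ar--Klep induction (\cite[Theorem 4.5]{bresar-klep1}) inside $A$ itself: decompose $f$ into multihomogeneous pieces, induct on the smallest variable degree, handle the degree-one base case by identifying $\Id(g(A))$ with some $I_{k'}$ and invoking Lemma~\ref{bunce} to get $k'\geq k$, and linearize the inductive step via $g(x_1,\dots,x_{n+1})=f(\dots,x_n+x_{n+1})-f(\dots,x_{n+1})-f(\dots,x_n)$. Your approach instead linearizes in a different sense: pass through tracial GNS representations, reduce everything to finite von~Neumann algebras via Kaplansky density and normality, extract a trace-compatible copy of $M_d(\mathbb C)$, and then cite the matrix-algebra Nullstellensatz directly. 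What your version buys is a clean separation between the purely $\Cstar$-algebraic content (ideals $I_k$, Lemma~\ref{bunce}, Cuntz--Pedersen, GNS) and the PI-theoretic content (the tracial Nullstellensatz on $M_m(\mathbb C)$), and it avoids redoing the multihomogeneous induction; what it costs is von~Neumann machinery (type decomposition, center-valued comparison to extract the matrix subalgebra, normal traces and Kaplansky density on bounded sets) that the paper's elementary induction never needs. Two points you should nail down if writing this up: (1) the Cuntz--Pedersen identification $\overline{[A,A]}=\bigcap_{\tau\in T(A)}\ker\tau$ uses a Jordan decomposition of bounded tracial functionals into positive ones, which deserves an explicit reference; (2) your ``corollary (b)'' needs the linear independence to be stated for \emph{cyclic equivalence classes} of monomials (cyclic shifts give the same trace functional), not for all $\sigma\in S_d$. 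Neither is a real gap, but both are the kind of thing a referee will ask about.
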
	

\begin{proof}
	Let us first prove the forward implications. If $f$ is cyclically equivalent to $0$
	then clearly $f(A)\subseteq \overline{[A,A]}$. Suppose that $k<\infty$
	and that $f$ is cyclically equivalent to a polynomial $g$ which is an identity on $M_k(\C)$.
	Then $g(A)\subseteq I_k$ and $I_k=\overline{[I_k,I_k]}$, since $I_k$ has no bounded traces. Thus, $g(A)\subseteq \overline{[A,A]}$. But 
	$(f-g)(A)\subseteq [A,A]$. Thus, $f(A)\subseteq \overline{[A,A]}$, as desired.
	
	Let us suppose now that $f(A)\subseteq \overline{[A,A]}$.	 We will follow closely the proof of \cite[Theorem 4.5]{bresar-klep1} where the   result is obtained  for
	the range of polynomials on matrix algebras. If the independent term of $f$ is  nonzero then 
	$1\in f(A)\subseteq \overline{[A,A]}$. Hence, $A$ has no bounded traces; i.e., $k=0$. Since, by convention,
	any  polynomial is an identity on $M_0(\C)$, we are done. Let us assume now that $f$
	has no independent term. 
	Let $f=\sum_{i=1}^m f_i$ be the decomposition of $f$ into multihomogeneous polynomials. Then, by the proof of \cite[Theorem 2.3]{bresar-klep1},
	$f_i(A)\subseteq \Lin(f(A))$ for all $i$. This reduces the proof to the case that
	$f$ is  multihomogeneous.
	We   prove the theorem for multihomogeneous polynomials by induction on the smallest degree
	of  its variables. Suppose that the degree of $f$ on $x_1$
	is 1. Then $f$ is cyclically equivalent to a polynomial of the form $x_1g(x_2,\dots,x_n)$. Hence  $Ag(A)\subseteq \overline{[A,A]}$, which in turn implies that $\Id(g(A))\subseteq \overline{[A,A]}$.  If $g$ is 0, then $f$ is cyclically equivalent to 0 and we are done. If $g$ is constant and nonzero, then $A=\Id(g(A))\subseteq \overline{[A,A]}$. That is,  $A=\overline{[A,A]}$, $k=0$, and $f$ is an identity on $M_0(\C)$; again we are done. If $g$ is nonconstant then  $\Id(g(A))=I_{k'}$ for some $k'$ and furthermore  $g$ is an identity on $M_{k'}(\C)$ (see the proof of Theorem \ref{polylie}).
	From $I_{k'}\subseteq \overline{[A,A]}$ and Lemma \ref{bunce} we deduce that $I_{k'}=\overline{[I_{k'},I_{k'}]}$. Hence,  $I_{k'}$ has no bounded traces; i.e.,   $k'\geq k$.  It follows that $g$ is an identity on $M_{k}(\C)$, and since  
	$f=x_1g$, so is $f$. This completes the first step of the induction.

	Suppose now that $f(x_1,\dots,x_n)$ is a multihomogeneous polynomial whose variable of smallest degree, $x_n$,  has degree  $d$, with $d>1$. Consider the polynomial
	\begin{multline}
	g(x_1,\dots,x_n,x_{n+1})=f(x_1,\dots,x_{n-1},x_n+x_{n+1})\\-
	f(x_1,\dots,x_{n-1},x_{n+1})
	-f(x_1,\dots,x_n).
	\end{multline}
	Then $g(A)\subseteq \overline{[A,A]}$ and the degree of $g$   on $x_n$ is less than $d$. By induction, $g$ is cyclically equivalent to a polynomial identity on $M_k(\C)$ (if $k<\infty$) or cyclically equivalent to $0$ (if $k=\infty$).
	Since $f(x_1,\dots,x_n)=\frac{1}{2^d-2} g(x_1,\dots,x_n,x_n)$, the same holds for $f$. 
\end{proof}

\section{Finite sums and sums of products}
Recall the following basic fact: a dense two-sided ideal in a unital $\Cstar$-algebra must agree with the whole $\Cstar$-algebra (because it would intersect the ball of radius one centered at the unit, all whose elements are invertible).  It follows that if $A$ is unital and $A=\Id(X)$ then $A=AXA$. Here we exploit this fact to obtain quantitative versions of some of the results from the previous sections. 

\begin{theorem}\label{fullL}
	Let $A$ be unital and let $L$ be a Lie ideal of $A$ such that $\Id([L,A])=A$. Suppose that $L$ is linearly spanned by a  set $\Gamma\subseteq A$; i.e.,  $L=\Lin(\Gamma)$.
	Suppose furthermore that there exists $M\in \N$ such that for all $l\in \Gamma$ and $z\in A$
	the commutator $[l,z]$ is a linear combination of at most $M$ elements of  the set  $\Gamma$. The following are true:
	\begin{enumerate}[(i)]
		\item
		There exists $N$ such that every element of $A$ is expressible as a linear combination of  $N$ elements of $\Gamma$
		and $N$ products of two elements of $\Gamma$.
		
		\item
		There exists $K$ such that every single commutator $[x,y]$ in $A$ is expressible as a linear combination of $K$ elements of
		$\Gamma$.
	\end{enumerate}
\end{theorem}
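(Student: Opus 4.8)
Since $A$ is unital and $\Id([L,A])=A$, the dense two-sided ideal $A[L,A]A$ must be all of $A$. Moreover, as in the proof of Theorem~\ref{bresar}, $\Id([L,A])=\Id\bigl([[L,A],[L,A]]\bigr)$, so the (algebraic) two-sided ideal generated by $[[L,A],[L,A]]$ is again $A$, hence semiprime; the Remark following Theorem~\ref{betterbresar}, applied to this $L$, then gives $A[L,A]A=[L,A]+[L,A]^2$. Therefore
\[
A=[L,A]+[L,A]^2\subseteq L+L^2=\Lin(\Gamma)+\Lin(\Gamma\Gamma),
\]
so every element of $A$ is \emph{some} linear combination of elements of $\Gamma$ and of products of two elements of $\Gamma$. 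Applying this to $1$, expanding the $L$-valued factors over $\Gamma$, and rewriting each commutator $[\gamma,z]$ ($\gamma\in\Gamma$, $z\in A$) as a combination of at most $M$ elements of $\Gamma$, we fix once and for all an expression $1=\sum_{p=1}^{P}\alpha_p\gamma_p+\sum_{q=1}^{Q}\beta_q\delta_q\delta_q'$ with $\gamma_p,\delta_q,\delta_q'\in\Gamma$ and $P$, $Q$, $\alpha_p$, $\beta_q$ fixed.

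\textbf{Part (i): reduction to bounded generation.} The whole issue is to bound the number of terms independently of $x$. The plan is to reduce (i) to the following bounded-generation statement: there is a constant $c$ so that every element of $[L,A]$ is a linear combination of at most $c$ elementary commutators $[\gamma,z]$ with $\gamma\in\Gamma$, $z\in A$ (together with the companion statement for $[L,A]^2$ and products of such). Granting this and using the $M$-bound, every $m\in[L,A]$ is a combination of at most $cM$ elements of $\Gamma$, and every $w\in[L,A]^2$ a combination of at most $(cM)^2$ products of two elements of $\Gamma$; since $A=[L,A]+[L,A]^2$, writing $x=m+w$ yields the required uniform bound $N$, depending only on $c$ and $M$.

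\textbf{The main obstacle.} The bounded-generation statement is the crux, and it cannot be reached naively: multiplying $x$ into the fixed expression $1=\sum\alpha_p\gamma_p+\sum\beta_q\delta_q\delta_q'$ and sliding $x$ past the $\Gamma$-factors only reproduces $x$ plus a useless relation among $\Gamma$- and $\Gamma\Gamma$-terms, because the right-hand side is $1$. Extracting a genuine bound requires exploiting that $1$ lies in $[L,A]+[L,A]^2$ — a sum of \emph{commutators} $[l,z]$ with $l\in L$ (and products of such) — so that the $L$-slots inside those commutators absorb free factors via the $M$-bound without inflating the term count; bounding the number of elementary commutators needed to write an element of $[L,A]$ is where the unital hypothesis enters essentially, and this is closely connected to the phenomenon of bounded commutator length studied in works such as \cite{marcoux09}. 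The identities $a[l_1,l_2]=[al_1,l_2]-[a,l_2]l_1$ and $[uv,a]=[u,va]+[v,au]$, Lemma~\ref{elementary}, and the linearizing property~\eqref{linearizing} ($[L^2,A]\subseteq[L,A]$) are the combinatorial tools to be juggled, but keeping all counts uniform is the technical heart.

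\textbf{Part (ii): deducing it from (i).} Using (i), write $x=\sum_i\lambda_i\gamma_i+\sum_j\mu_j\varepsilon_j\varepsilon_j'$ with a bounded number of terms ($\gamma_i,\varepsilon_j,\varepsilon_j'\in\Gamma$). Then $[x,y]=\sum_i\lambda_i[\gamma_i,y]+\sum_j\mu_j[\varepsilon_j\varepsilon_j',y]$, and each $[\gamma_i,y]$ is a combination of at most $M$ elements of $\Gamma$ by hypothesis. For the product terms, the identity $[uv,a]=[u,va]+[v,au]$ gives
\[
[\varepsilon_j\varepsilon_j',y]=[\varepsilon_j,\varepsilon_j'y]+[\varepsilon_j',y\varepsilon_j],
\]
a sum of two commutators of a \emph{single} element of $\Gamma$ with an element of $A$, hence itself a combination of at most $2M$ elements of $\Gamma$ — so no products survive. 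Summing over the bounded set of indices gives $[x,y]$ as a linear combination of $K$ elements of $\Gamma$, with $K$ a fixed multiple of the bound furnished by (i), as required.
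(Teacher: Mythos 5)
Your part (ii) argument is correct and essentially identical to the paper's. But part (i) has a genuine gap: you explicitly flag the bounded-generation statement as ``the crux'' and ``the technical heart,'' and then do not actually prove it. The approach you describe — write $A=[L,A]+[L,A]^2$, then separately bound the number of commutators needed for each summand — is precisely the step that requires an argument, and none is supplied.

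The idea you are missing is that one should \emph{not} reduce the unit all the way down to an expression in $\Gamma$ and $\Gamma\Gamma$. As you correctly observe, that expression has no free $A$-slots, so multiplying by $a$ just reproduces $a$ and teaches you nothing. Instead, keep $1$ at an intermediate stage. Since $A$ is unital and, as in the proof of Theorem~\ref{bresar}, $\Id([L,A])=\Id([L,L])=\Id([\Gamma,\Gamma])$, the two-sided ideal algebraically generated by $[\Gamma,\Gamma]$ is all of $A$; hence
\[
1=\sum_{i=1}^{n}x_i[k_i,l_i]y_i,\qquad k_i,l_i\in\Gamma,\ x_i,y_i\in A,
\]
with $n$ \emph{fixed once and for all}. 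Now for any $a\in A$, multiplying on the left gives $a=\sum_{i=1}^{n}(ax_i)[k_i,l_i]y_i$ — the same number of terms, each of the shape $x[l,m]y$ with $l,m\in\Gamma$ fixed and $x,y\in A$ arbitrary, because $a$ was absorbed into the free slot $x_i$ without disturbing the $\Gamma$-slots. The whole proof then reduces to bounding a single such term, which the paper does via the explicit identity
\[
x[l,m]y=[xyl,m]-[xy,m]l+[xm,[y,l]]-[x,[y,l]]m+[xl,[m,y]]-[x,[m,y]]l.
\]
Every summand here is of one of the forms $[z,\gamma]$, $[z,\gamma]\gamma'$, $[z,[z',\gamma]]$, $[z,[z',\gamma]]\gamma'$ with $\gamma,\gamma'\in\Gamma$ and $z,z'\in A$, and one or two applications of the $M$-bound turn each into a linear combination of at most $M^2$ elements of $\Gamma$ or products of two elements of $\Gamma$. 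Combined with the fixed count $n$, this gives the uniform $N$. So the lever you needed was not bounded commutator length in the abstract, but the elementary observation that $a\mapsto a\cdot 1$ can be funneled into the outer $A$-factors of a fixed ideal decomposition of $1$.
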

\begin{proof}
	We have shown in the proof of Theorem \ref{bresar} (i)   that $\Id([L,A])=\Id([L,L])$.
	(Indeed, after setting $I=\Id([[L,A],[L,A]])$, we proceeded to show that $[L,A]\subseteq I$, which implies that
	$\Id([L,A])\subseteq I\subseteq \Id([L,L])$. Clearly, these inclusions must be equalities.) Therefore, $A=\Id([L,L])=\Id([\Gamma,\Gamma])$. Since $A$ is unital, it is algebraically generated as a two-sided ideal by   $[\Gamma,\Gamma]$. 
	Hence, 
	\[
	1=\sum_{i=1}^n x_i[k_i,l_i]y_i,
	\] 
	for some $x_i,y_i\in A$ and $k_i,l_i\in \Gamma$. Let $a\in A$. Then 
	\[
	a=\sum_{i=1}^n (ax_i)[k_i,l_i]y_i.
	\] 
	It suffices to show that each term of the sum on the right is a linear combination of a fixed number of 
	elements of $\Gamma$ and of products of two elements of $\Gamma$. We have
	the following identity (derived from the arguments in the proof of Lemma \ref{elementary} (i)):
	\[
	x[l,m]y=[xyl,m]-[xy,m]l+[xm,[y,l]]-[x,[y,l]]m+[xl,[m,y]]-[x,[m,y]]l,
	\]
	for all $x,y\in A$ and $l,m\in \Gamma$. Observe that each of the terms on the right side are of either one of the following forms: 
	$[z,l]$,  $[z,l]l'$, $[z,[z',l]]$, or $[z,[z',l]]l'$, where $z,z'\in A$ and $l,l'\in \Gamma$.
	Recall now that, by assumption, the commutators 
	$[z,l]$, with $z\in A$ and $l\in \Gamma$, are expressible as linear combinations of at most $M$ elements
	of $\Gamma$. This implies that elements of either one of the  forms mentioned before are linear combinations of either $M$
	or $M^2$ elements of $\Gamma$ or products of two elements of $\Gamma$.
	
	(ii) Let $x\in A$. By (i), $x=\sum_{i=1}^N\lambda_i l_i+\sum_{i=1}^N\mu_i m_in_i$ for some scalars
	$\lambda_i,\mu_i$ and some $l_i,m_i,n_i\in \Gamma$. Let $y\in A$. Then,
	\[
	[x,y]=\sum_{i=1}^N\lambda_i[l_i,y]+\sum_{i=1}^N\mu_i[m_i,n_iy]+\sum_{i=1}^N\mu_i[n_i,ym_i].
	\]
	Appealing to the fact that every commutator of the form $[l,z]$, with $l\in \Gamma$ and $z\in A$
	is a linear combination of at most $M$ elements of $\Gamma$, we deduce that the right side is a linear 
	combination of $3MN$ elements of $\Gamma$.
\end{proof}

\begin{theorem}\label{sumofcomms}
	Let $A$ be unital and without 1-dimensional representations. Then there exists $N\in \N$ such that every element of $A$ is expressible as a sum of the form
	\begin{equation*}
	\sum_{i=1}^N[a_i,b_i]+\sum_{i=1}^N[c_i,d_i]\cdot [c_i',d_i'].
	\end{equation*}
\end{theorem}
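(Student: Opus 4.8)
The plan is to obtain this as a direct application of Theorem \ref{fullL} to the Lie ideal $L=[A,A]$. First I would set $\Gamma=\{[a,b]:a,b\in A\}$, so that $L=\Lin(\Gamma)=[A,A]$ is a Lie ideal of $A$ (by Jacobi's identity, $[[A,A],A]\subseteq[A,A]$), and the claimed decomposition is exactly the assertion that every element of $A$ is a linear combination of $N$ elements of $\Gamma$ together with $N$ products of two elements of $\Gamma$ — once the scalar coefficients are absorbed into the commutators, which is the trivial identity $\lambda[a,b]=[\lambda a,b]$.

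Next I would verify the three hypotheses of Theorem \ref{fullL}. Unitality of $A$ is assumed. For the requirement $\Id([L,A])=A$: since $A$ has no $1$-dimensional representations and the quotient $A/\Id([A,A])$ is a unital commutative $\Cstar$-algebra, that quotient must be zero, i.e.\ $\Id([A,A])=A$; combining this with Theorem \ref{bresar}(ii) applied to the Lie ideal $A$, which gives $\overline{[[A,A],A]}=\overline{[A,A]}$, we obtain $\Id([L,A])=\Id([[A,A],A])=\Id([A,A])=A$. For the remaining hypothesis, namely that some fixed $M$ bounds the number of elements of $\Gamma$ needed to express $[l,z]$ for $l\in\Gamma$, $z\in A$: Jacobi's identity gives $[[a,b],z]=[a,[b,z]]+[b,[z,a]]$, a sum of two single commutators, so $M=2$ works.

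With the hypotheses in hand, Theorem \ref{fullL}(i) produces the desired $N\in\N$, and absorbing scalars into the commutators (padding with zero commutators if needed so that both sums run from $1$ to $N$) yields the stated form. I do not expect any real obstacle here: the theorem is essentially a quantitative refinement of Corollary \ref{linN2}, and all the work has already been done in Theorem \ref{fullL}; the only points requiring a moment's care are the identification $\Id([[A,A],A])=A$ under the no-$1$-dimensional-representation hypothesis and the Jacobi computation that certifies $M=2$.
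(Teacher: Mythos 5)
Your proof is correct and follows essentially the same route as the paper: apply Theorem \ref{fullL} to $L=[A,A]$ with $\Gamma$ the set of single commutators. In fact you are a bit more explicit than the paper, which only records that $\Id([A,A])=A$ and leaves both the passage to $\Id([[A,A],A])=A$ (via Theorem \ref{bresar}(ii)) and the Jacobi computation certifying $M=2$ implicit.
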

\begin{proof}
	The quotient $A/\Id([A,A])$ is a  commutative $\Cstar$-algebra.  If it were nonzero,  it would have  non-trivial 1-dimensional representations. But we have asssumed that $A$ has no 1-dimensional representations, Thus,  $A=\Id([A,A])$. The previous theorem is then applicable to $L=[A,A]$ and
	$\Gamma=\{[x,y]\mid x,y\in A\}$, yielding the desired result.
\end{proof}

We can link the constant  $N$  in Theorem \ref{sumofcomms}  
to a certain notion of  ``divisibility" studied in \cite{RR}.  A unital 
$\Cstar$-algebra $A$  is called weakly $(2,N)$-divisible if there exist $x_1,\dots,x_N\in N_2$ and 
$d_1,\dots,d_N\in A$ such that
\[
1=\sum_{i=1}^N d_i^*x_i^*x_i d_i.
\] 
(The definition of weakly $(2,N)$-divisible in \cite{RR} is  in terms of the Cuntz semigroup of $A$ but can be seen to be equivalent to this one.) A  unital $\Cstar$-algebra without 1-dimensional representations must be weakly $(2,N)$-divisible for some $N$ (\cite[Corollary 5.4]{RR}). This fact, combined with the following proposition,  gives  another proof of Theorem \ref{sumofcomms}. 

\begin{proposition}
	If $A$ is   unital and  weakly $(2,N)$-divisible then every element of $A$   is expressible as a sum of the form $\sum_{i=1}^N[a_i,b_i]+\sum_{i=1}^N[c_i,d_i]\cdot [c_i',d_i']$.
\end{proposition}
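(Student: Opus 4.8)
The plan is to run the computation behind Theorems~\ref{fullL} and~\ref{sumofcomms}, but to read the two families of $N$ terms straight off the divisibility relation. Given $a\in A$, multiply $1=\sum_{i=1}^N d_i^{*}x_i^{*}x_id_i$ by $a$ and group the factors as $a=\sum_{i=1}^N(ad_i^{*}x_i^{*})(x_id_i)$. Writing $P_i:=ad_i^{*}x_i^{*}$, $Q_i:=x_id_i$ and using $pq=qp+[p,q]$, this becomes
\[
a=\sum_{i=1}^N[P_i,Q_i]+\sum_{i=1}^N Q_iP_i,\qquad\text{with}\quad Q_iP_i=x_i(d_iad_i^{*})x_i^{*}.
\]
The first sum already has the required shape ($N$ single commutators), so it suffices to prove the following claim: \emph{for every $x\in N_2$ and every $b\in A$, the element $xbx^{*}$ is a product of two commutators in $A$.} Applying the claim with $x=x_i$ and $b=d_iad_i^{*}$ then yields the proposition.

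For the claim I would use the Aluthge-transform device already appearing in Lemma~\ref{aluthge}. Let $x=v|x|$ be the polar decomposition in $A^{**}$ and set $\phi:=v|x|^{1/2}$ and $c:=|x|^{1/2}b|x|^{1/2}$. Both lie in $A$: indeed $|x|^{1/2}=(x^{*}x)^{1/2}\in A$, and $\phi$ is the norm-limit of $x(|x|+\varepsilon)^{-1/2}\in A$ as $\varepsilon\to 0$ (the same element that already occurs in the proof of Lemma~\ref{aluthge}). The one nontrivial point is that $|x|^{1/2}v|x|^{1/2}=0$: from $x^{2}=0$ we get $v|x|v|x|=0$, hence $|x|v|x|=0$ after multiplying on the left by $v^{*}$ (using $v^{*}v|x|=|x|$), and then
\[
|x|^{1/2}v|x|^{1/2}=\lim_{\varepsilon,\delta\to 0}(|x|+\varepsilon)^{-1/2}\bigl(|x|v|x|\bigr)(|x|+\delta)^{-1/2}=0,
\]
using that $|x|(|x|+\varepsilon)^{-1/2}\to|x|^{1/2}$ in norm and commutes with $(|x|+\varepsilon)^{-1/2}$. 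Granting this, $c\phi=|x|^{1/2}b\,(|x|^{1/2}v|x|^{1/2})=0$, so $[\phi,c]=\phi c=v|x|\,b|x|^{1/2}=xb|x|^{1/2}$, and hence
\[
[\phi,c]\,\phi^{*}=xb|x|^{1/2}\cdot|x|^{1/2}v^{*}=xb|x|v^{*}=xbx^{*}.
\]
Moreover $\phi^{2}=v\bigl(|x|^{1/2}v|x|^{1/2}\bigr)=0$, so $\phi^{*}$ is nilpotent of order at most $2$ and therefore is itself a single commutator, say $\phi^{*}=[\psi,\psi']$, by Lemma~\ref{aluthge}. Thus $xbx^{*}=[\phi,c][\psi,\psi']$ is a product of two commutators, proving the claim.

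Assembling everything, with $\phi_i:=v_i|x_i|^{1/2}$, $c_i:=|x_i|^{1/2}(d_iad_i^{*})|x_i|^{1/2}$ and $\phi_i^{*}=[\psi_i,\psi_i']$, we obtain
\[
a=\sum_{i=1}^N[P_i,Q_i]+\sum_{i=1}^N[\phi_i,c_i][\psi_i,\psi_i'],
\]
which is precisely the asserted form. I expect the only genuinely delicate steps to be the vanishing $|x|^{1/2}v|x|^{1/2}=0$ and the (standard, but easy to overlook) fact that the partial-isometry-flavoured element $\phi=v|x|^{1/2}$ really belongs to $A$; the rest is routine bookkeeping with $pq=qp+[p,q]$ and the polar decomposition. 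It may help to keep in mind the $M_2$-picture, in which $xbx^{*}$ lands in a corner and is visibly the product of the commutators representing $x$ and $bx^{*}$ against the relevant matrix units — those matrix units are not available inside a general $A$, which is exactly why the Aluthge transform is substituted for them.
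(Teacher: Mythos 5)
Your proof is correct and follows essentially the same route as the paper: the same decomposition of $a$ via $1=\sum d_i^*x_i^*x_id_i$ and $pq=[p,q]+qp$, then the same factorization $xbx^*=(xb|x|^{1/2})\cdot(|x|^{1/2}v^*)$ using polar decomposition. The only cosmetic difference is that you exhibit the first factor as the commutator $[v|x|^{1/2},\,|x|^{1/2}b|x|^{1/2}]$ directly (via the vanishing of the Aluthge transform of $x$), whereas the paper simply observes that both factors lie in $N_2$ and invokes Lemma~\ref{aluthge} twice.
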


\begin{proof}
	Suppose that $1=\sum_{i=1}^N d_i^* x_i^*x_i d_i$, with $x_i\in N_2$ for all $i$. Let $a\in A$.
	Then 
	\[
	a=\Big(\sum_{i=1}^N d_i^* x_i^*x_i d_i\Big)\cdot a=\sum_{i=1}^N [d_i^*x_i^*,x_id_ia] + \sum_{i=1}^N x_i d_iad_i^*x_i^*.
	\]
	It thus suffices to show that $xbx^*$ is a product of 2 commutators for all $x\in N_2$ and $b\in A$.
	Say $x=v|x|$ is the polar decomposition of $x$ in $A^{**}$. Then $xbx^*=(xb|x|^{\frac 1 2})\cdot |x|^{\frac 1 2}v^*$. But both     $xb|x|^{\frac 1 2}$ and $|x|^{\frac 1 2}v^*$ belong to $N_2$. (Let us prove this for the latter: We have
	$|x|^{\frac 1 2}\in \Cstar(x^*x)\subseteq \overline{|x|Ax}$. Multipliying by $v$ on the left we get that $v|x|^{\frac 1 2}\in \overline{xAx}$. Since $x$ is a square zero element,  we deduce that  $v|x|^{\frac 1 2}$, and its adjoint, are square zero elements as well.) By  Lemma \ref{aluthge}, both $xb|x|^{\frac 1 2}$ and $|x|^{\frac 1 2}v^*$ are commutators.
\end{proof}

\begin{remark}	
	If $1\in B\subseteq A$ and $B$ is weakly $(2,N)$-divisible then so is $A$.  This observation can be used to find upper bounds on $N$ for specific examples  (e.g., when $B$ is a dimension drop $\Cstar$-algebra; see \cite[Example 3.12]{RR}).
\end{remark}

Let $P\subseteq A$ denote the set of projections of $A$.
Let us apply Theorem \ref{fullL} to $\Lin(P)$. To see that this is a Lie ideal, recall that the linear  span of the idempotents is  Lie ideal and that, by a theorem of 
Davidson (see paragraph after \cite[Theorem 4.2]{marcoux-murphy}), every idempotent is a linear combination of five projections. In Davidson's theorem, the number of projections can be reduced to four:
\begin{lemma}\label{davidson}
	Every idempotent of $A$ is a linear combination of four projections
\end{lemma}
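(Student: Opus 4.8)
The starting point is Davidson's theorem that every idempotent $e$ is a linear combination of five projections. Writing out the proof of Davidson's theorem, the five projections arise from a self-adjoint idempotent-like decomposition: given an idempotent $e$, the element $e+e^*-1$ is invertible and self-adjoint, and one uses its polar-type decomposition to conjugate $e$ into a self-adjoint form. Concretely, a standard argument (see Marcoux--Murphy) shows $e$ is of the form $p + (\text{scalar}) q + \cdots$ where the building blocks are the range projection $p$ of $e$, another projection, and at most one further correction term that is itself a small linear combination of projections. My plan is to revisit that construction and show that the fifth projection can always be absorbed, i.e., that the correction term is in fact a linear combination of only \emph{three} projections rather than four, or equivalently that two of the five projections can be combined into one.

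First I would set up the reduction to self-adjoint idempotents: if $e$ is an idempotent, then $h = 1 + (e - e^*)$ is invertible (its imaginary part argument, or directly $h^*h = 1 + (e-e^*)^*(e-e^*) \ge 1$), and $p = e h^{-1}$... more precisely the classical fact is that $q := e + e^* + (e^*-1)(e-1) \cdot(\text{something})$ — let me instead just invoke that $e$ is similar via a unitary to $p + n$ where $p$ is a projection and $n$ is a nilpotent of square zero with $pn = n$, $np = 0$, and then the similarity can be realized so that the problem reduces to expressing $p + n$ with $n \in N_2$. So the core computational task is: show $p + n$ is a linear combination of four projections, where $p$ is a projection, $n^2 = 0$, $pn = n$, $np = 0$. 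Here one can use functional calculus on the positive element $n^*n$ (which commutes with $1-p$) to build the needed projections explicitly; the element $p + n$ sits inside the hereditary subalgebra generated by $p$ and $n^*n$, which decomposes as a direct sum/extension of matrix-like pieces of size at most $2$, and in a $2\times 2$ matrix algebra $\begin{pmatrix} 1 & t \\ 0 & 0\end{pmatrix}$ is visibly a linear combination of four projections by a direct computation (two for the "diagonal part" after rotation, adjusting with scalars). The uniformity over the fibers is what lets the construction globalize to the $\Cstar$-algebra.

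The key steps, in order, are: (1) reduce to idempotents of the form $p + n$ with $p$ a projection and $n \in N_2$ satisfying $pn = n$, $np = 0$, via a unitary similarity inside $A$ (or at worst $A^{**}$, then note the projections produced lie back in $A$); (2) pass to the hereditary subalgebra $B = \overline{(p + n^*n)A(p + n^*n)}$ and analyze its structure using the polar decomposition $n = v|n|$ with $v^*v$ a projection under $1-p$ and $vv^* = p$ (so $v$ is a partial isometry giving a $2\times 2$ matrix structure over $\Cstar(|n|)$); (3) in that matrix picture, write $p + n$ explicitly as a combination of four projections, with coefficients that are fixed scalars independent of $|n|$; (4) check that the four projections so constructed are genuine projections in $A$ and that the identity holds in $A$, not just in $B$, by adding back the complementary projection appropriately.

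\emph{The main obstacle} I anticipate is step (3): squeezing the count down from five to four requires a genuinely clever choice. The naive decomposition of $\begin{pmatrix} 1 & t \\ 0 & 0 \end{pmatrix}$ uses the identity as one of the projections plus three rank-one projections, giving four, but then globalizing forces a fifth projection to handle the part of $A$ outside $B$ unless one is careful to reuse $p$ (or $1$) as one of the four. The trick will be to arrange the decomposition so that one of the four projections is exactly the range projection $p$ of the original idempotent (which is already "free"), leaving only three projections to be built inside $B$ — and showing that $\begin{pmatrix} 1 & t \\ 0 & 0\end{pmatrix} - \begin{pmatrix} 1 & 0 \\ 0 & 0 \end{pmatrix}= \begin{pmatrix} 0 & t \\ 0 & 0 \end{pmatrix}$, a square-zero element, is a linear combination of only \emph{three} projections. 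That last claim — that a square-zero element in a von Neumann algebra (or suitable $\Cstar$-algebra) with the relevant matrix structure is a combination of three projections, uniformly — is the technical heart, and it should follow from writing $\begin{pmatrix} 0 & t \\ 0 & 0 \end{pmatrix}$ in terms of the three projections $\frac12\begin{pmatrix} 1 & *\\ * & 1\end{pmatrix}$-type matrices obtained by rotating with unitaries $\begin{pmatrix} \cos\theta & \sin\theta \\ -\sin\theta & \cos\theta\end{pmatrix}$ at two angles plus one diagonal projection, solving a small linear system for the scalar coefficients.
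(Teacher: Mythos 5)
Your high-level strategy is essentially the paper's: write $e = p + x$ with $p$ the range projection of $e$ and $x \in pA(1-p)$, then show the square-zero corner element $x$ is a linear combination of \emph{three} projections. But your reduction step is overcomplicated, and the key step is left as an aspiration rather than executed.

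On the reduction: no unitary similarity, polar decomposition, or hereditary subalgebra analysis is needed. The range projection $p$ of an idempotent $e$ lies in $A$, and one has $pe = e$ and $ep = p$ directly, so $e = p + e(1-p)$ with $x := e(1-p) = pe(1-p) \in pA(1-p)$. Your detour through the polar decomposition $n = v|n|$ puts $v$ in $A^{**}$, which is exactly the kind of obstacle the direct approach avoids; the paper never needs a partial isometry.

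On the key step: the paper makes this concrete. After rescaling so that $\|x\| < \frac12$, one defines
\[
q(x) = \begin{pmatrix} \tfrac{1+\sqrt{1-4xx^*}}{2} & x \\ x^* & \tfrac{1-\sqrt{1-4xx^*}}{2}\end{pmatrix} \in \begin{pmatrix} pAp & pA(1-p)\\ (1-p)Ap & (1-p)A(1-p)\end{pmatrix},
\]
which a direct computation shows is a projection; the diagonal entries are obtained by functional calculus on $xx^*$ and $x^*x$ inside $A$, so no passage to $A^{**}$ or to a fiberwise picture is required. Then, because $q(\lambda x)$ for unimodular $\lambda$ all share the same diagonal entries while the $(1,2)$ and $(2,1)$ entries are $\lambda x$ and $\bar\lambda x^*$, one can solve a $3\times 3$ linear system in the coefficients to kill the diagonal and the $(2,1)$ entry and keep the $(1,2)$ entry; concretely,
\[
x = \tfrac{1+i}{4}\,q(x) + \tfrac{-1+i}{4}\,q(-x) - \tfrac{i}{2}\,q(ix).
\]
Your ``rotation by two angles'' idea is close in spirit but would only produce real off-diagonal entries; since $x$ is a general corner element, you need the complex phases $1, -1, i$ rather than real rotation angles, and this is precisely what the three projections $q(x), q(-x), q(ix)$ implement. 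Together with $p$ itself, this gives the four projections.
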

\begin{proof}
	Let  $e\in A$ be an idempotent and let $p\in A$ denote its range projection. Then $e=p+x$, with $x\in pA(1-p)$. Let us show that $x$ is a linear combination
	of three projections. It suffices to assume that $\|x\|<\frac  1 2$. 
	For each $x\in pA(1-p)$ such that  $\|x\|<\frac  1 2$  let us define
	\[
	q(x)=
	\begin{pmatrix}
	\frac{1+\sqrt{1-4xx^*}}{2} & x\\
	x^* & \frac{1-\sqrt{1-4xx^*}}{2}
	\end{pmatrix}
	\in 
	\begin{pmatrix}
	pAp & pA(1-p)\\
	(1-p)Ap & (1-p)A(1-p)
	\end{pmatrix}.
	\] 
	A straightforward computation shows that   $q(x)$ is a projection and, furthermore, that
	\[
	x=\frac{1+i}{4}q(x)+\frac{-1+i}{4}q(-x)-\frac{i}{2}q(ix). \qedhere
	\]
\end{proof}

\begin{theorem}
	Suppose that the $\Cstar$-algebra  $A$ is  unital and  that $\Id([P,A])=A$. The following are true:
	\begin{enumerate}[(i)]
		\item
		There exists   $N$  such that every element of $A$
		is expressible as a linear combination of $N$ projections and $N$  products of two projections. 
		
		\item
		There exists $K$ such that every commutator $[x,y]$, with $x,y\in A$, is expressible as a linear combination of $K$ projections.		
	\end{enumerate}
\end{theorem}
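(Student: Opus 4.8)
The plan is to apply Theorem~\ref{fullL} to the Lie ideal $L=\Lin(P)$, taking as spanning set $\Gamma=P$, the set of projections of $A$. The bulk of the work is verifying the last hypothesis of that theorem, namely that there is a constant $M$ such that $[p,z]$ is a linear combination of at most $M$ projections for every projection $p$ and every $z\in A$. Once this is in hand, it follows in particular that $[\Lin(P),A]\subseteq \Lin(P)$, so that $L=\Lin(P)$ is indeed a Lie ideal (this is also what the cited theorem of Davidson gives), and moreover $[L,A]=[\Lin(P),A]$ has $\Id([L,A])=\Id([P,A])=A$ by hypothesis. Theorem~\ref{fullL}(i) then yields assertion (i) and Theorem~\ref{fullL}(ii) yields assertion (ii) verbatim, with $\Gamma=P$.

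To bound $[p,z]$ in terms of projections I would use the identity
\[
[p,z]=pz(1-p)-(1-p)zp,
\]
together with the computation already performed in the proof of Lemma~\ref{davidson}. That computation shows that any $x\in pA(1-p)$ with $\|x\|<\tfrac{1}{2}$ equals $\tfrac{1+i}{4}q(x)+\tfrac{-1+i}{4}q(-x)-\tfrac{i}{2}q(ix)$, a linear combination of three projections; rescaling (which changes only the coefficients, not the number of projections) gives the same conclusion for an \emph{arbitrary} element of $pA(1-p)$. Applying this to $x=pz(1-p)$ handles the first term. For the second term, observe that $(1-p)zp$ is the adjoint of $pz^*(1-p)\in pA(1-p)$; writing the latter as a linear combination of three projections and taking adjoints — the adjoint of a projection being a projection — expresses $(1-p)zp$ as a linear combination of three projections as well. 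Hence $[p,z]$ is a linear combination of at most six projections, so $M=6$ works.

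I do not expect a genuine obstacle here. The only points needing a little care are that the rescaling step leaves the count of projections unchanged (only the scalar coefficients change), and that the off-diagonal corner $(1-p)Ap$ is dealt with by passing to adjoints rather than by re-running the $q(\cdot)$ construction directly. With $\Gamma=P$ and $M=6$ established, Theorem~\ref{fullL} applies and both assertions follow immediately; one could also track the constants through the proof of Theorem~\ref{fullL} to obtain explicit bounds on $N$ and $K$, but that is not needed for the statement as given.
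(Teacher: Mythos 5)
Your proposal is correct and follows the same route as the paper: apply Theorem~\ref{fullL} to $L=\Lin(P)$ with $\Gamma=P$, the whole point being a uniform bound on the number of projections needed to express $[p,z]$. The only difference is cosmetic: the paper writes $[p,z]=(p+pz(1-p))-(p+(1-p)zp)$ as a difference of two idempotents and invokes Lemma~\ref{davidson} as a black box (giving $M=8$), whereas you unpack the $q(\cdot)$ computation inside Lemma~\ref{davidson}, handle $pz(1-p)$ directly and $(1-p)zp$ by taking adjoints, obtaining the marginally sharper $M=6$; both arguments are valid.
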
	
\begin{proof}
	Both (i) and (ii) will follow once we show that Theorem \ref{fullL} is applicable to the Lie ideal  $\Lin(P)$ and the generating set $P$.
	It suffices to show that a commutator of the form $[p,z]$, with $p$ a projection,  is a linear combination of projections with a uniform bound on the number of terms. But   
	\[
	[p,z]=(p+pz(1-p))-(p+(1-p)zp),
	\] 
	where $p+pz(1-p)$ and $p+(1-p)zp$ are idempotents. Each of them is a linear combination of four projections by Lemma \ref{davidson}. 
\end{proof}

\begin{remark} If $B$ is a unital $\Cstar$-subalgebra of $A$ and $\Id([P_B,B])=B$,
	then 
	\[
	1=\sum_{i=1}^n x_i[p_i,q_i]z_i,
	\] 
	for $x_i,y_i,z_i\in B$ and projections $p_i,q_i\in P_B$.
	It follows that the constants $N$ and $K$ that one finds for $B$ following the proof of Theorem \ref{fullL}
	applied to $L=\Lin(P_B)$ also work for the $\Cstar$-algebra $A$.
	This observation can be used to obtain concrete estimates of these constants in  cases where $B$ is rather simple.
\end{remark}

An element of  a $\Cstar$-algebra is called full if it generates the $\Cstar$-algebra as a closed two-sided ideal.  Recall also that a unital $\Cstar$-algebra is said to have real rank zero if its invertible selfadjoint elements are dense in the
set of selfadjoint elements. By \cite[Theorem V.7.3]{davidson}, this is equivalent to asking that every hereditary $\Cstar$-subalgebra of $A$ has an approximate unit consisting of projections. 
\begin{corollary}
	Suppose that $A$ is unital and either contains two full orthogonal projections or has real rank zero and no 1-dimensional representations. Then there exist $N$ and $K$ such that (i) and  (ii)  of the previous theorem hold for $A$. 
\end{corollary}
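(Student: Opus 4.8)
The plan is to reduce each of the two hypotheses to the condition $\operatorname{Id}([P,A])=A$ that appears in the previous theorem, after which the conclusion is immediate. So the whole argument is: show that in either case, the projections of $A$ generate $A$ as a closed two-sided ideal (equivalently, via Lemma \ref{bunce}, that $[P,A]$ generates $A$ as a closed ideal; but it is cleaner to argue $\operatorname{Id}(P)=A$ together with the appropriate commutator condition). Actually the cleanest route is to verify the hypothesis of the previous theorem directly, $\operatorname{Id}([P,A])=A$, in each of the two cases.

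First, suppose $A$ contains two full orthogonal projections $p$ and $q$, with $pq=0$. Then $p+q$ is again a projection, and since $p$ is full, so is $p+q$ (it dominates $p$). Set $e=p+q$. Consider $I=\operatorname{Id}([P,A])$, a closed two-sided ideal, and pass to $A/I$. In $A/I$ every projection is central, since $[p',a]\in I$ for every projection $p'$ and every $a$; in particular the images $\bar p,\bar q$ are central orthogonal projections whose sum is a full projection in $A/I$. But a central projection generates a closed ideal equal to the corner it cuts out, and a full central projection must therefore be the unit of $A/I$ (if $A/I\neq 0$); combined with $\bar p\bar q=0$ and $\bar p+\bar q=1$, the element $\bar p$ is a central projection with $\bar p=\bar p(\bar p+\bar q)=\bar p$, i.e.\ $1=\bar p+\bar q$ decomposes the unit, which is fine, but the point is: $\bar p$ is full and central, hence $\operatorname{Id}(\bar p)=\overline{(A/I)\bar p(A/I)}=\overline{\bar p(A/I)}=A/I$, yet $\overline{\bar p(A/I)}$ is a proper corner unless $\bar q=0$. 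Since $q$ is full, $\bar q$ full in $A/I$ forces $\bar q\neq 0$ whenever $A/I\neq 0$, a contradiction. Hence $A/I=0$, i.e.\ $\operatorname{Id}([P,A])=A$, and the previous theorem applies. (The underlying elementary fact being used: two nonzero orthogonal projections cannot both be central and full in the same nonzero $\Cstar$-algebra, because each then equals the unit, forcing the other to be $0$.)

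Second, suppose $A$ has real rank zero and no $1$-dimensional representations. Again let $I=\operatorname{Id}([P,A])$ and suppose toward a contradiction that $A/I\neq 0$. Real rank zero passes to quotients, so $A/I$ has real rank zero; in particular it has an abundance of projections — indeed it is the closed span of its projections, since in a real rank zero $\Cstar$-algebra every self-adjoint element is a norm limit of real-linear combinations of projections (this follows from the approximate-unit-of-projections characterization of real rank zero quoted just before the corollary, applied to hereditary subalgebras, plus spectral approximation). But every projection of $A/I$ is central, by construction of $I$. Hence $A/I$ is generated as a $\Cstar$-algebra by central projections, so $A/I$ is commutative. A nonzero commutative $\Cstar$-algebra has $1$-dimensional representations (characters), and these lift to $1$-dimensional representations of $A$, contradicting the hypothesis. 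Therefore $A/I=0$ and $\operatorname{Id}([P,A])=A$, so the previous theorem again applies and yields the constants $N$ and $K$.

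The main obstacle — the only point requiring a little care — is establishing that a real rank zero $\Cstar$-algebra is the closed linear span of its projections, and more precisely that this survives passage to the quotient $A/I$; I would handle this by invoking the hereditary-approximate-unit-of-projections description of real rank zero (cited in the paragraph preceding the corollary), noting it is inherited by quotients, and then approximating a self-adjoint $x$ by $p_\varepsilon x p_\varepsilon$ for suitable projections $p_\varepsilon$ and applying functional calculus / spectral projections to $p_\varepsilon x p_\varepsilon$ inside the corner $p_\varepsilon (A/I) p_\varepsilon$. For the first case the only subtlety is the bookkeeping around ``full + central $\Rightarrow$ equals the unit,'' which is routine once one recalls that a central projection $z$ satisfies $\operatorname{Id}(z)=\overline{zAz}=zA$, a proper ideal unless $z=1$. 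Everything else is a direct appeal to the preceding theorem.
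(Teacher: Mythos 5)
Your overall strategy---reduce to $\Id([P,A])=A$ and invoke the previous theorem---is the same as the paper's, but you verify this condition by passing to the quotient by $I=\Id([P,A])$ and deriving a contradiction, while the paper works forward and explicitly exhibits $A$ as a closed ideal generated by commutators $[p,A]$ for suitable projections $p$. For case (a) the paper takes a full projection $p$ with $1-p$ also full (available from the hypothesis because $1-p$ dominates the second full projection) and simply computes $A=\Id(p)\cdot\Id(1-p)=\Id(pA(1-p))=\Id([p,A])$; your quotient argument for this case is also valid, although the aside asserting $\bar p+\bar q=1$ is neither justified nor needed---all you actually use is that a full central projection equals the unit, which forces $\bar q=\bar q\bar p=0$, contradicting fullness of $\bar q$.

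For case (b) the paper's proof is considerably more elaborate (it shows $\Id(N_2)=A$, chooses finitely many square-zero elements generating $A$ as an ideal, produces projections $p_i$ in their hereditary subalgebras, and uses Murray--von Neumann subequivalence to get $\Id(p_i)=\Id([p_i,A])$), and your contradiction route is in principle shorter. However, as written it contains a gap: you assert that ``every projection of $A/I$ is central, by construction of $I$.'' The construction of $I$ only guarantees that the \emph{images} of projections of $A$ are central in $A/I$; a projection of $A/I$ that does not lift to a projection of $A$ is not covered, and projection lifting for real-rank-zero quotients is a nontrivial theorem of Brown--Pedersen, not something to assume silently. The clean repair avoids the detour through the real rank of $A/I$ entirely: since $A$ is unital of real rank zero, $A$ itself is the closed linear span of its own projections, so $A/I$ is the closed span of their (central) images and hence commutative; a nonzero commutative $\Cstar$-algebra has a character, which pulls back to a $1$-dimensional representation of $A$, contradicting the hypothesis. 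With that repair your argument for (b) is a genuinely simpler alternative to the paper's.
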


\begin{proof}
	Let us  show    in both cases that $\Id([P,A])=A$. 
	
	Say $p$ is a projection such that $p$ and $1-p$ are full; i.e, $A=\Id(p)=\Id(1-p)$. Then
	\[
	A=\Id(p)\cdot \Id(1-p)=\overline{ApA(1-p)A}=\Id(pA(1-p)).
	\] 
	On the other hand, $\Id(pA(1-p))=\Id([p,A])$. 	
	Indeed, 
	\[pA(1-p)=[p,A](1-p)\subseteq \Id([p,A]),\]
	and  conversely
	\[
	[p,A]=\{pa(1-p)-(1-p)ap\mid a\in A\}\subseteq \Id(pA(1-p)).
	\] 
	(We have $(1-p)ap\in \Id(pA(1-p))$ since closed two-sided ideals are selfadjoint.)  Hence, $A=\Id(pA(1-p))=\Id([p,A])$, as desired.
	
	Suppose now that $A$ has real rank zero and no 1-dimensional representations, i.e.,  $A=\Id([A,A])$. Since $\Id([A,A])=\Id(N_2)$ (where, as before, $N_2$ denotes the  set of nilpotents of order two),   $A=\Id(N_2)$. Furthermore, since $A$ is unital  there exist   $x_1,\dots,x_n\in N_2$ such that $A=\Id(x_1,\dots,x_n)$, for it suffices to choose these elements such that $\sum_{i=1}^n a_ix_ib_i$ is invertible for some 
	$a_i,b_i\in A$. Since $A$ has real rank-zero, the hereditary subalgebras $\overline{x_i^*Ax_i}$ have approximate units consisting of projections for all $i$. Using this, we can can find projections $p_i\in \overline{x_i^*Ax_i}$ for $i=1,\dots,n$ 
	such that $A=\Id(p_1,\dots,p_n)$.  We claim that $p_i$ 
	is Murray-von Neumann subequivalent to $1-p_i$ for all $i$. To prove this, let  $x_i=v_i|x_i|$ be the polar decomposition of $x_i$ in $A^{**}$.   Since  $p_i\in \overline{x_i^*Ax_i}$  we have $p_i\leq v_i^*v_i$. On the other hand, $x_i^2=0$ implies that $v_i^*v_i$
	and $v_iv_i^*$ are orthogonal projections. Hence, $v_iv_i^*\leq 1-p_i$. It follows that
	$p_i=(p_iv_i^*)(v_ip_i)$ and $(v_ip_i)(p_iv_i^*)=v_ip_iv_i^*\leq v_iv_i^*\leq 1-p_i$.
	This proves the claim.  We now have that  $\Id(p_i)\subseteq \Id(1-p_i)$
	for all $i=1,\dots,n$. Hence,
	\[
	\Id(p_i)=\Id(p_i)\cdot \Id(1-p_i)=\Id(p_iA(1-p_i))=\Id([p_i,A]),
	\]
	for all $i=1,\dots,n$. 
	So $A=\Id(p_1,\dots,p_n)=\Id([p_1,A],\dots,[p_n,A])$, as desired.
\end{proof}

Next we turn to the Lie ideals generated by polynomials already investigated in the previous section. As  before, by a polynomial we understand a polynomial in noncommuting variables with coefficients in $\C$.

\begin{theorem}
	Let $k\in \N$. Suppose that the $\Cstar$-algebra $A$ is unital and has no representations of dimension less than  or equal to $k$. 
	Let $f$ be a nonconstant polynomial such that  $f(A)\subseteq \overline{[A,A]}$ and which is not a polynomial identity on $M_k(\C)$.  The following are true:
	\begin{enumerate}[(i)]
		\item
		There exists  $N$ such that each element of $A$ is expressible as a linear combination of $N$ values of $f$ on $A$ and $N$ products of two values of $f$ on $A$.
		\item
		There exists $K$ such that each commutator $[x,y]$ in $A$ is expressible as a linear combination of $K$ values of $f$ on $A$.
	\end{enumerate}
\end{theorem}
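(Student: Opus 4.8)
The plan is to deduce both parts from Theorem \ref{fullL}, applied to the Lie ideal $L=\Lin(f(A))$ with spanning set $\Gamma=f(A)$ (the set of values of $f$ on $A$). The hypotheses of Theorem \ref{fullL} that are immediate are that $A$ is unital, that $L=\Lin(\Gamma)$, and that $L$ is a Lie ideal (the last by \cite[Theorem 2.3]{bresar-klep1}). For the condition $\Id([L,A])=A$ I would argue as follows: since $[L,A]=[f(A),A]$ and $f(A)\subseteq\overline{[A,A]}$, the proof of Theorem \ref{polylie} shows that $\Id([f(A),A])=I_j$, where $j\geq 0$ is the largest integer for which $f$ is a polynomial identity on $M_j(\C)$. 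Because $f$ is not a polynomial identity on $M_k(\C)$ we have $j<k$, and because $A$ has no representations of dimension at most $k$ we have $I_k=A$; hence $I_j=A$ (as $I_j\supseteq I_k=A$), so $\Id([L,A])=A$.

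The only substantial point is the remaining hypothesis of Theorem \ref{fullL}: the existence of a constant $M$, depending only on $f$, such that $[v,z]$ is a linear combination of at most $M$ values of $f$ for every $v\in f(A)$ and every $z\in A$. This is the quantitative counterpart of the fact that $\Lin(f(A))$ is a Lie ideal, and it can be extracted from the proof of \cite[Theorem 2.3]{bresar-klep1} by keeping track of the number of terms. I would carry this out in three reductions. First, decompose $f=\sum_{i=1}^m f_i$ into its multihomogeneous components; a Vandermonde argument applied to scalar dilations of the variables shows that every value of each $f_i$ is a linear combination of at most $D:=\prod_j(\deg_{x_j}f+1)$ values of $f$. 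Second, pass from each $f_i$ to its full polarization $\widehat{f_i}$, a multilinear polynomial in $n_i:=\deg f_i$ variables: the polarization identity writes each value of $\widehat{f_i}$ as a linear combination of at most $2^{\deg f}$ values of $f_i$, while the diagonal substitution exhibits each value of $f_i$ as a nonzero scalar multiple of a single value of $\widehat{f_i}$. Third, for a multilinear polynomial $g$ in $n$ variables one has the telescoping identity
\[
[g(x_1,\dots,x_n),z]=\sum_{i=1}^n g(x_1,\dots,x_{i-1},[x_i,z],x_{i+1},\dots,x_n),
\]
so $[w,z]$ is a sum of $n$ values of $g$ for every value $w$ of $g$. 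Chaining these: if $v=f(\vec a)=\sum_i f_i(\vec a)$, then each $[f_i(\vec a),z]$ is, up to a nonzero scalar, the commutator with $z$ of a value of $\widehat{f_i}$ (by the diagonal substitution), hence a sum of $n_i$ values of $\widehat{f_i}$ by the third reduction, hence a linear combination of at most $n_i\cdot 2^{\deg f}\cdot D$ values of $f$ by the second and first reductions; summing over $i=1,\dots,m$ gives a constant $M$ depending only on $f$.

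Once all hypotheses are verified, Theorem \ref{fullL} yields a constant $N$ for which every element of $A$ is a linear combination of $N$ elements of $\Gamma=f(A)$ and $N$ products of two elements of $\Gamma$, and a constant $K$ for which every commutator $[x,y]$ in $A$ is a linear combination of $K$ elements of $\Gamma$; these are exactly (i) and (ii). I expect the main obstacle to be the bookkeeping in the three reductions of the second paragraph — in particular, ensuring that the Vandermonde and polarization steps convert between values of $f$, values of its multihomogeneous parts, and values of their polarizations with a number of terms depending only on $f$ and not on the point $\vec a$ or the element $z$. Everything else is a direct application of results already established.
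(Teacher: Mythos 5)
Your proof is correct and follows essentially the same route as the paper: both apply Theorem~\ref{fullL} to $L=\Lin(f(A))$ with spanning set $\Gamma = f(A)$, both establish $\Id([f(A),A])=A$ by combining the analysis from the proof of Theorem~\ref{polylie} ($\Id([f(A),A]) = \Id(f(A)) = I_{k'}$ with $k' < k$) with the hypothesis $I_k=A$, and both obtain the uniform bound $M$ by tracing through the Bre\v{s}ar--Klep argument: decompose into multihomogeneous components, reduce to the multilinear case, and apply the Leibniz/telescoping identity $[g(a_1,\dots,a_n),z]=\sum_i g(a_1,\dots,[a_i,z],\dots,a_n)$. The only stylistic difference is in the multilinearization step: the paper performs the partial polarization iteratively, lowering the degree in one variable at a time via $g(\dots,x_n,x_{n+1})=f(\dots,x_n+x_{n+1})-f(\dots,x_{n+1})-f(\dots,x_n)$ and $f=\tfrac{1}{2^d-2}g(\dots,x_n,x_n)$, whereas you pass directly to the full polarization $\widehat{f_i}$ and invoke the global polarization identity and diagonal substitution. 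These are two presentations of the same reduction; your one-shot version makes the combinatorial bookkeeping ($n_i\cdot 2^{\deg f}\cdot D$) slightly more transparent, while the paper's iterative version is closer to the phrasing in \cite{bresar-klep1} and avoids writing down the polarization identity explicitly.
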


\begin{proof}
	Both (i) and (ii) will follow from Theorem \ref{fullL} applied to the Lie ideal $\Lin(f(A))$, with generating set  $f(A)$, once we show the the hypotheses of that theorem are valid in this case. 
	
	Since all representations of $A$ have dimension at least $k+1$, we have $A=I_k$, where $I_k$ is as defined in the previous section. Also, by the proof of Theorem \ref{polylie},  $\Id(f(A))=I_{k'}$, where $k'$ is the largest  number such that $f$ is an identity on $M_{k'}(\C)$.  But $f$ is not an identity on $M_k(\C)$, so
	we must have that $k'\leq k$. Hence $\Id(f(A))=A$. Furthermore,  as argued in the proof of Theorem \ref{polylie},   $\Id([f(A),A])=\Id(f(A))$. Thus, $A=\Id([f(A),A])$. 
	
	To complete the proof, it remains to show  that there is a uniform bound on the number of terms expressing a commutator $[f(\overline a),y]$ as a linear combination  of elements of  $f(A)$. This is indeed true, and can be derived from the proof of \cite[Theorem 2.3]{bresar-klep1} (showing that $\Lin(f(A))$ is a Lie ideal). 
	We only sketch the argument here: Say $f=\sum_{i=1}^m f_i$
	is the decomposition of $f$ into a sum of multihomogeneous polynomials. Then, as argued in the proof of \cite[Theorem 2.3]{bresar-klep1}, relying on \cite[Lemma 2.2]{bresar-klep1}, each evaluation  $f_i(\overline a)$
	is expressible as a linear combination of at most $(d+1)^n$ values of $f$. Here $d$ is the maximum of the degrees of $f$ on its variables  and $n$ the number of variables. It thus suffices to prove the desired result  for each $f_i$, or put differently, to assume that $f$ is multihomogeneous. If $f$ is a constant polynomial then $[f(\overline a),y]=0$ and the desired conclusion holds trivially. Let us assume that $f$ is multihomogeneous and has nonzero degree. We can furthermore reduce ourselves to the multilinear case. For suppose that $f$ has degree $d>1$ on $x_n$.  Let
	\begin{multline}
	g(x_1,\dots,x_n,x_{n+1})=f(x_1,\dots,x_{n-1},x_n+x_{n+1})\\
	-f(x_1,\dots,x_{n-1},x_{n+1})-f(x_1,\dots,x_n).
	\end{multline}
	Then  the degree of $g$  on $x_n$ is less than $d$ and $f(x_1,\dots,x_n)=\frac{1}{2^d-2} g(x_1,\dots,x_n,x_n)$.
	This reduces the proof to $g$. Continuing in this way, we arrive at a multilinear polynomial.  Finally, if $f$ is multilinear  then the identity
	\[
	[f(a_1,\dots, a_n),y]=f([a_1,y],\dots,a_n)+f(a_1,[a_2,y],\dots,a_n)+\cdots+f(a_1,\dots,[a_n,y])
	\]
	shows that there is a uniform bound on the number  of terms expressing $[f(\overline a),y]$ as a linear combination
	of values of $f$. 
\end{proof}

A theorem of 
Pop (\cite[Theorem 1]{pop}) says that if $A$ is unital and without bounded traces then there exists $M\in \N$ such every element of $A$ is a sum
of $M$ commutators. Combining this with the previous theorem yields the following corollary: 
\begin{corollary}\label{polypop}
	Let $A$ be unital and without bounded traces. Let $f$ be a nonconstant polynomial. 
	Then there exists $N\in \N$ such that each element of $A$ is expressible as a linear combination of $N$ values of $f$ on $A$.
\end{corollary}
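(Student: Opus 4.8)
The plan is to combine Pop's theorem with part~(ii) of the theorem immediately preceding this corollary; part~(i) of that theorem will not be needed.

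The first step is to check that the hypotheses of the preceding theorem are met in our situation. A $\Cstar$-algebra without bounded traces can have no nonzero finite-dimensional representation: a nonzero representation $\pi\colon A\to M_n(\C)$ produces the positive bounded tracial functional $x\mapsto \mathrm{Tr}(\pi(x))$ (bounded with norm at most $n$, and tracial by the trace property of $\mathrm{Tr}$ on $M_n(\C)$), which is nonzero since $\mathrm{Tr}(\pi(x^*x))>0$ whenever $\pi(x)\neq 0$. Hence $A$ has no representation of dimension at most $k$ for \emph{every} $k\in\N$. Moreover, by Pop's theorem every element of $A$ is a sum of a fixed number of commutators, so in particular $f(A)\subseteq A=[A,A]\subseteq\overline{[A,A]}$. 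Finally, I would fix $k\in\N$ such that $f$ is not a polynomial identity on $M_k(\C)$ — such a $k$ exists because no nonconstant polynomial is an identity on every matrix algebra (cf.\ Theorem~\ref{polylie} and the discussion preceding it). With this choice of $k$, all hypotheses of the preceding theorem hold.

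The second step applies that theorem: part~(ii) supplies a constant $K$ such that every commutator $[x,y]$ in $A$ is a linear combination of at most $K$ values of $f$ on $A$. Combining this with Pop's theorem (\cite{pop}), which gives $M\in\N$ such that every element of $A$ is a sum of $M$ commutators, we conclude that every element of $A$ is a linear combination of at most $N:=MK$ values of $f$ on $A$, which is the desired conclusion.

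I do not expect a genuine obstacle: the whole argument is a short composition of two known results. The only point requiring a little care is the verification in the first step that the hypothesis ``$A$ has no bounded traces'' actually delivers both of the inputs the preceding theorem needs — namely the absence of representations of small dimension and the containment $f(A)\subseteq\overline{[A,A]}$ — together with the observation that one is free to pick $k$ so that $f$ fails to be a polynomial identity on $M_k(\C)$.
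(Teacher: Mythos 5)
Your proof is correct and follows essentially the same route as the paper: you verify that the hypotheses of the preceding theorem hold (no finite-dimensional representations, $f(A)\subseteq\overline{[A,A]}$ via Pop's theorem, and a suitable $k$ exists), then combine part (ii) of that theorem with Pop's theorem to obtain $N=MK$. The only cosmetic difference is that you spell out the trace argument showing the absence of finite-dimensional representations, which the paper treats as known.
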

\begin{proof}
	Since $A$ has no bounded traces  it has no  finite dimensional representations. Hence $I_k=A$ for all $k\in \N$. Furthermore, $f(A)\subseteq A=\overline{[A,A]}$   by Pop's theorem.  Thus, by the preceding theorem,  every commutator is a linear combination of $K$ values  of $f$. On the other hand,  every element of $A$ is a sum of $M$ commutators (by Pop's theorem).  So every element of $A$ is a linear combination of $KM$ values of $f$.
\end{proof}

In \cite{bresar-klep2}, Bre\v{s}ar and Klep reach the conclusion of the preceding corollary for $K(H)$ and $B(H)$
(the compact and bounded operators on a Hilbert space) and   for certain rings obtained as tensor products.

Next we construct   examples showing that if $f(\C)=\{0\}$ then the number $N$ in Corollary \ref{polypop} can be arbitrarily large. Taking $f(x,y)=[x,y]$ this shows that in Pop's theorem the number  of commutators  can be arbitrarily large.
Taking $f(x_1,\dots,x_6)=[x_1,x_2]+[x_3,x_4]\cdot [x_5,x_6]$ this  shows that the $N$ in Theorem \ref{sumofcomms} can be arbitrarily large as well.

\begin{example}\label{counterpop}
	Let $f$ be a polynomial in $n$ noncommuting variables such that
	$f(\C)=\{0\}$.  Let $K\in \N$. We will construct a $\Cstar$-algebra $A$, unital and without bounded traces, and an element $e\in A$ not expressible as a linear combination of $K$ values of $f$. 
	Let $S^2$ denote the 2-dimensional sphere. Let $\eta\in M_2(C(S^2))$ be a rank one non-trivial projection
	(i.e, one not Murray-von Neumann equivalent to a constant rank one projection). 
	Choose $N\geq 2Kn$.  Let $\eta_N=\eta^{\otimes N}\in M_{2^N}(C((S^2)^N))$. It is well know that the vector bundle associated to $\eta_N^{\otimes N}$ has non-trivial Euler class. In particular, any $N$
	sections of the vector bundle associated to $\eta_N$ have a common vanishing point.

	Let $X=\prod_{i=1}^\infty (S^2)^N$. Let $1_X$ denote the unit of $C(X)$. Let $e$, $p$, and $q$ be projections  in $C(X,B(\ell^2(\N)))$  defined
	as follows:
	\begin{align*}
	e &=\mathrm{diag}(1_X,0,0,\dots),\\
	q(x_1,x_2,\dots)&=\mathrm{diag}(0,\eta_N(x_1),\eta_N(x_2),\dots),\\
	p(x_1,x_2,\dots) &=\mathrm{diag}(1_X,\eta_N(x_1),\eta_N(x_2),\dots),
	\end{align*}
	where $x_i\in (S^2)^N$ for all $i=1,2,\dots$.  The following facts are known
	(see \cite[Th\'{e}or\`{e}me 6]{dixmier-douady} and \cite[Section 4]{rordamP}): 
	\begin{enumerate}
		\item
		$q^{\oplus N+1}$ is a properly infinite projection (i.e., $q^{\oplus N+2}$
		is Murray-von Neumann subequivalent to $q^{\oplus N+1}$), 
		\item
		$e$ is not Murray-von Neumann  subequivalent to $q^{\oplus N}$. Thus, for any $N$ elements of    $qC(X,B(\ell^2(\N)))e$ (i.e., ``sections'' of $q$) there exists $x\in X$ on which they all vanish.
	\end{enumerate}
	Since $p=e\oplus q$, we  have   that $p^{\oplus N+1}$
	is also a properly infinite projection.  
	Let us define  $A=pC(X,B(\ell^2(\N)))p$.  
	Notice first that $A$ cannot have bounded traces, since its unit is stably properly infinite.
	Let us show that
	$e\in A$ cannot be approximated within a distance less than one 
	by a linear combination of $K$ elements of $f(A)$.
	Suppose, for the sake of contradiction,  that 
	\[
	\Big\|e-\sum_{i=1}^K\lambda_i f(\overline a_i)\Big\|<1.
	\]
	Multiplying by $e$ on the left and on the right we get
	\begin{equation}\label{withinlessthan1}
	\Big\|e-\sum_{i=1}^K\lambda_i ef(\overline a_i)e\Big\|<1.
	\end{equation}
	Say $\overline a_i=(a_{i,1},\dots,a_{i,n})$ for $i=1,\dots K$. 
	Since $p=e\oplus q$, we may regard  each $a_{i,j}\in A$ as an ``$e\times q$''
	matrix:
	\[
	a_{i,j}=\begin{pmatrix}
	b_{i,j} & c_{i,j} \\
	d_{i,j} & e_{i,j},
	\end{pmatrix}\in 
	\begin{pmatrix}
	eC(X,B(\ell^2))e & eC(X,B(\ell^2))q \\
	qC(X,B(\ell^2))e & qC(X,B(\ell^2))q
	\end{pmatrix}
	\]
	for all $i=1,\dots,K$ and $j=1,\dots,n$. Since $N\geq 2nK$, there exists $x\in X$
	such that $ c_{i,j}(x)=d_{i,j}(x)=0$ for all $i,j$. But $eC(X,B(\ell^2))e\cong \C$ and
	$f(\C)=0$.
	So $ef(\overline a_i(x))e=f(\overline b_i(x))=0$ for all $i=1,\dots,K$. Evaluating at $x\in X$
	in  \eqref{withinlessthan1}  we then get $\|e(x)-0\|< 1$, which is clearly impossible. 
\end{example}

\begin{remark}The previous  example shows also that the existence of a unit cannot be dropped neither in Theorem \ref{sumofcomms} nor in  Corollary \ref{polypop}. Indeed, consider $A=\bigoplus_{N=1}^\infty A_N$, 
	with $A_N$  as in the example above. Then $A$ has no  bounded traces (whence no 1-dimensional representations) but 
	$A\neq \Lin(f(A))$ for any polynomial $f$ in noncommuting variables such that $f(\C)=0$.
\end{remark}

\section{Similarity invariance and the span of  $N_2$}\label{simspan}
Let $U\subseteq A$ be a linear subspace. In this section we investigate the equivalence between the following two properties of $U$: 
\begin{enumerate}[(i)]
	\item
	$(1+x)U(1-x)\subseteq U$ for all $x\in N_2$,
	\item
	$U$ is a Lie ideal.
\end{enumerate}
We have seen in Theorem \ref{closedinvariance} 
that if $U$ is closed then (i) and (ii) are indeed equivalent. Furthermore, the proof of (ii) $\Rightarrow$ (i) in Theorem \ref{closedinvariance} is valid for any subspace $U$ of $A$. 
Thus, we are interested in the implication (i) $\Rightarrow$ (ii) when $U$ is not necessarily  closed.
In the closed case,  the  proof of (i) $\Rightarrow$ (ii) in Theorem \ref{closedinvariance}   can be split into two steps: In the first step we showed  that $U$ is a Lie ideal of $[A,A]$. This was done as follows: (i) readily implies that 
$[U,N_2]\subseteq U$. Then using that $[A,A]\subseteq \overline{\Lin(N_2)}$ (by Corollary \ref{linN2}) and  that $U$ is closed, 
we  arrived at $[U,[A,A]]\subseteq U$.  In the second step we appealed to Theorem \ref{LieAA}, showing that a closed Lie ideal of $[A,A]$ is a Lie ideal of $A$. 

Let us first address the passage from $[U,N_2]\subseteq U$ to $[U,[A,A]]\subseteq U$ in the non-closed case.
Let  $A_+$ denote the positive elements of $A$. Let us define 
\[
N_2^c=
\{
x\in A\mid xe=fx=x\hbox{ for some }e,f\in A_+\hbox{ such that }ef=0\}.
\]
One   readily checks that $N_2^c\subseteq N_2$. Let us show that
$N_2^c$ is dense in $N_2$. 
Let $x\in N_2$. Observe that for each   $\phi\in C_0(0,1]$ we have $\phi(|x|)\phi(|x^*|)=0$,
since $\phi(|x|)\in \Cstar(x^*x)$ and $\phi(|x^*|)\in \Cstar(xx^*)$. Let us choose 
$\phi_1,\phi_2,\ldots\in C_0(0,1]$, an approximate unit of $C_0(0,1]$  such that $\phi_{n+1}\phi_n=\phi_n$ for all $n$. Then $\phi_n(|x^*|)x\phi_n(|x|)\in N_2^c$ for all $n$, since we can  set $e=\phi_{n+1}(|x|)$ and 
$f=\phi_{n+1}(|x^*|)$. Furthermore, 
$\phi_n(|x^*|)x\phi_n(|x|)\to x$. Thus, $N_2^c$ is dense in $N_2$.  

Let us define  
\[
SN_2^c=\bigcup_{x\in N_2\cup\{0\}} (1+x)N_2^c(1-x).
\]
Notice that we still have $SN_2^c\subseteq N_2$. 

\begin{lemma}\label{N2cLie}
	$\Lin(SN_2^c)$ is a Lie ideal.
\end{lemma}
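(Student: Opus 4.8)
The plan is to reduce the lemma to the assertion that $\Lin(N_2^c)$ itself is a Lie ideal, i.e. that $[y,b]\in\Lin(N_2^c)$ for all $y\in N_2^c$ and $b\in A$, and then to prove that assertion using the ``compact support'' encoded in the definition of $N_2^c$. For the reduction: the elements $(1+x)y(1-x)$ with $x\in N_2\cup\{0\}$ and $y\in N_2^c$ span $\Lin(SN_2^c)$, so it suffices to show $[(1+x)y(1-x),a]\in\Lin(SN_2^c)$ for $a\in A$. Since $1+x$ is invertible with inverse $1-x$ when $x\in N_2$, the identity $[sus^{-1},a]=s[u,s^{-1}as]s^{-1}$ (with $s=1+x$) gives
\[
\bigl[(1+x)y(1-x),a\bigr]=(1+x)\,[y,b]\,(1-x),\qquad b=(1-x)a(1+x).
\]
Therefore, if $[y,b]=\sum_i\lambda_iy_i$ with each $y_i\in N_2^c$, then $[(1+x)y(1-x),a]=\sum_i\lambda_i(1+x)y_i(1-x)\in\Lin(SN_2^c)$, because every $(1+x)y_i(1-x)$ lies in $SN_2^c$ by definition; the case $x=0$ is just $[y,b]\in\Lin(N_2^c)$ again. (Incidentally, once $\Lin(N_2^c)$ is known to be a Lie ideal it is invariant under $u\mapsto(1+x)u(1-x)$, $x\in N_2$, in view of $(1+x)u(1-x)=u+[x,u]+\tfrac12[x,[x,u]]$, so $SN_2^c\subseteq\Lin(N_2^c)$ and in fact $\Lin(SN_2^c)=\Lin(N_2^c)$.)

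To prove $[y,b]\in\Lin(N_2^c)$ I would fix $y\in N_2^c$, write $y=fye$ with positive contractions $e,f\in A$ satisfying $ef=0$, and use the resulting relations $ey=0$, $yf=0$, $(e+f)y=y(e+f)=y$ to split $[y,b]=yb-by$ into a finite sum of square-zero elements adapted to the ``corner'' $f\,A\,e$ in which $y$ lives. The ``off-diagonal'' contributions one gets are of the shape $f'(\cdot)e'$ with $e'f'=0$, hence manifestly in $N_2^c$; the ``diagonal'' contribution is of the shape $\operatorname{diag}(cd,-dc)$ with $c,d$ obtained by sandwiching $b$ between $e$ and $f$ and combining with $y$, and this one I would handle by an algebraic identity such as
\[
\begin{pmatrix}cd&0\\0&-dc\end{pmatrix}=\begin{pmatrix}cd&c\\-dcd&-dc\end{pmatrix}-\begin{pmatrix}0&c\\0&0\end{pmatrix}+\begin{pmatrix}0&0\\dcd&0\end{pmatrix},
\]
whose summands are all square-zero — the first because its column $\bigl(\begin{smallmatrix}c\\-dc\end{smallmatrix}\bigr)$ and row $(d,1)$ are orthogonal.

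The step I expect to be the real obstacle is certifying that every element arising in this decomposition genuinely belongs to $N_2^c$ — that is, has \emph{positive} witnesses \emph{inside $A$} — rather than merely to $N_2$ (or having witnesses only in $A^{**}$, which would not be enough). This is exactly where ``$N_2^c$ rather than $N_2$'' is used: because $1\in\operatorname{sp}(e)\cap\operatorname{sp}(f)$ whenever $y\neq0$, the witnesses $e,f$ can be refined by functional calculus, and the factors extracted from $y$ and $b$ inherit a compact-support type of control, which is what supplies the required positive, mutually orthogonal witnesses. Carrying this out so that orthogonality of the witnesses holds \emph{exactly} (and not just approximately) — and so that the split of $[y,b]$ really lands term-by-term in $N_2^c$ — is the delicate part of the proof.
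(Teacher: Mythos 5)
Your outline correctly isolates the two things one must do — split $[y,b]$ into ``off-diagonal'' pieces (which land in $N_2^c$ because of the orthogonal positive witnesses) and a ``diagonal'' piece, and then deal with the diagonal piece by an identity expressing a difference $cd-dc$ through square-zero matrices. That is exactly the shape of the paper's argument, which, for $x\in N_2^c$ with witnesses refined to a chain $e_0,e_1,e_2,e_3$ and $f_0,f_1,f_2,f_3$ by functional calculus, writes
\[
ax-xa=(1-e_1)ax+[e_1af_1,x]-xa(1-f_1),
\]
where the first and last terms are in $N_2^c$ and the middle ``diagonal'' commutator is the hard part.

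There are, however, two genuine problems with the proposal. First and most importantly, you reduce the lemma to the \emph{stronger} claim that $\Lin(N_2^c)$ is itself a Lie ideal, i.e.\ that $[y,b]\in\Lin(N_2^c)$ term-by-term; but the very decomposition you suggest does not deliver this. Translating your $2\times 2$ identity into the $\Cstar$-algebra with $c\leftrightarrow u:=e_1af_1$ and $d\leftrightarrow x$, your first summand $\bigl(\begin{smallmatrix}cd&c\\-dcd&-dc\end{smallmatrix}\bigr)$ becomes $(1-x)u(1+x)$, a conjugate of the $N_2^c$-element $u$ by $1-x$. Such a conjugate is a square-zero element, but it no longer has the two orthogonal positive witnesses \emph{inside $A$} that membership in $N_2^c$ requires: conjugation by $1\pm x$ mixes the $e$- and $f$-corners and destroys the exact orthogonality. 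Indeed, this is precisely why the set $SN_2^c$ is introduced in the paper: $(1-x)u(1+x)\in SN_2^c$ but not, in general, in $N_2^c$. The paper's own treatment of the diagonal term uses the sibling identity
\[
[e_1af_1,x]=(1+e_1af_1)x(1-e_1af_1)+(e_1af_1)x(e_1af_1)-x,
\]
and places the conjugate $(1+e_1af_1)x(1-e_1af_1)$ in $SN_2^c$ (not $N_2^c$) while noting that the other two terms lie in $N_2^c$ with witnesses $e_0,f_0$. So you should not reduce to $\Lin(N_2^c)$ being a Lie ideal; the correct target is the span of $SN_2^c$, and the conjugate produced by your identity is the whole reason $SN_2^c$ appears in the statement.

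Second, you explicitly flag the certification of $N_2^c$-membership as ``the real obstacle'' and ``the delicate part of the proof'' and then stop — so the key step is not carried out. The certification is done in the paper by choosing a nested chain $e_0e_1=e_1$, $e_1e_2=e_2$, $e_2e_3=e_3$, $xe_3=x$ (and similarly for $f$), so that each piece in the decomposition acquires explicit orthogonal positive witnesses taken from that chain. Your sketch never produces such a chain, and without it the ``off-diagonal'' terms are no more obviously in $N_2^c$ than the diagonal one. In short: the strategic picture is right, but the reduction overreaches (it aims for $\Lin(N_2^c)$ when only $\Lin(SN_2^c)$ is attainable by this route), and the crucial witness bookkeeping — precisely what makes the lemma nontrivial — is acknowledged but left undone.
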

\begin{proof}
	It suffices to show that 
	$[A,x]\subseteq  \Lin(SN_2^c)$ for all $x\in N_2^c$. For then, conjugating by
	the algebra automorphism $a\mapsto (1+y)a(1-y)$, with $y\in N_2$, and using the invariance of $SN_2^c$ under such automorphisms, we get that $[A,(1+y)x(1-x)]\subseteq \Lin(SN_2^c)$ for all $x\in N_2^c$ and $y\in N_2$, as desired.
	
	Let $x\in N_2^c$. Let $e$ and $f$ be positive elements such that $xe=fx=x$ and $ef=0$. Using functional calculus on $e$, let us find   positive contractions $e_0,e_1,e_2,e_3\in \Cstar(e)$ such that $e_0e_1=e_1$, $e_1e_2=e_2$, $e_2e_3=e_3$ and $xe_3=x$.
	Similarly, let  us find positive contractions $f_0,f_1,f_2,f_3\in \Cstar(f)$ such that 
	$f_0f_1=f_1$, $f_1f_2=f_2$, $f_2f_3=f_3$ and $f_3x=x$. Note that $xe_i=f_jx=x$
	and $e_if_j=0$ for all $i,j=0,1,2,3$.
	Now let $a\in A$.
	Then
	\begin{align*}
	ax-xa &=ax-e_1ax+e_1ax-xaf_1+xaf_1-xa\\
	&=(1-e_1)ax+[e_1af_1,x]-xa(1-f_1).
	\end{align*}
	The term $(1-e_1)ax$ is in $N_2^c$. Indeed, $1-e_2$ and $e_3$ act as multiplicative units on the left and on the right of $(1-e_1)ax$ and 
	$(1-e_2)e_3=0$. 
	We check similarly  that $xa(1-f_1)$ is in $N_2^c$.
	As for $[e_1af_1,x]$ (a commutator of elements in $N_2^c$), we have that
	\[
	[e_1af_1,x]=(1+e_1af_1)x(1-e_1af_1)+(e_1af_1)x(e_1af_1)-x.
	\]
	The first term on the right belongs to $SN_2^c$. The other two have multiplicative units $e_0$ and $f_0$ on the left and on the right and thus belong  to $N_2^c$. 
\end{proof}

The following theorem answers Question \ref{linN2question} affirmatively when $A$ is unital and without 1-dimensional representations.

\begin{theorem}\label{linN2comm}
	Suppose that $A$ has no 1-dimensional representations.
	Then  
	\[\Lin(SN_2^c)=[\mathrm{Ped}(A),\mathrm{Ped}(A)].
	\]	 
	If in addition  $A$
	is unital, then $\Lin(N_2)=[A,A]$. Furthermore, in the unital case there exists $K\in \N$ such that every single commutator $[x,y]$ in $A$ is a sum of at most $K$ square zero elements.  
\end{theorem}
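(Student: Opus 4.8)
The plan is to exploit the general machinery built in Section 1, now with the Pedersen ideal playing the role of a non-closed two-sided ideal and $\Lin(SN_2^c)$ playing the role of the non-closed Lie ideal. First I would record that since $A$ has no 1-dimensional representations, $A = \Id([A,A])$, so $\Id([A,A]) = A$ and hence $\mathrm{Ped}(A) = \mathrm{Ped}(\Id([A,A]))$. Lemma \ref{N2cLie} gives that $L := \Lin(SN_2^c)$ is a Lie ideal, and since $N_2 \subseteq \overline{[A,A]}$ (Lemma \ref{aluthge}, or Corollary \ref{linN2}), we have $\overline{[L,A]} \subseteq \overline{[A,A]}$. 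The key density fact established just before the theorem is that $N_2^c$ is dense in $N_2$; since $N_2$ spans a dense subspace of $\overline{[A,A]}$ (Corollary \ref{linN2}), so does $SN_2^c$, giving $\overline{[L,A]} = \overline{[A,A]}$. Then Corollary \ref{smallest} applies: $[P,P] \subseteq L$ where $P = \mathrm{Ped}(\Id([A,A])) = \mathrm{Ped}(A)$.

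For the reverse inclusion $L \subseteq [P,P]$, I would argue that each element of $SN_2^c$ lies in $P$, so that $L \subseteq P$; then $[L,A] \subseteq [P,A] = [P,P]$ by Lemma \ref{PPPA}, and in particular $L = [L,A] \subseteq [P,P]$ once we know $L$ consists of commutators — but more directly, since $L$ is spanned by elements of $SN_2^c \subseteq N_2 \subseteq [A,A]$ and $L \subseteq P$, each generator of $L$ is a commutator $[a,b]$; using the linearizing trick and $P = P^2$ one pushes this into $[P,P]$. The cleanest route is: show $SN_2^c \subseteq \mathrm{Ped}(A)$, then invoke the ``furthermore'' clause of Theorem \ref{pedersen} (with $L$ a Lie ideal contained in $P$, $[L,A] = [P,P]$) after first checking $\overline{[L,A]} = \overline{[A,A]}$ forces $[L,A]$ to be everything relevant. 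To see $SN_2^c \subseteq \mathrm{Ped}(A)$: if $x \in N_2^c$ then $xe = fx = x$ with $e, f \in A_+$, so $x = fxe \in A\,\mathrm{Ped}(A)\,A$ provided $e, f$ can be taken in $\mathrm{Ped}(A)$ — and indeed one may shrink $e$ and $f$ to $\phi(e)$, $\phi(f)$ for suitable $\phi \in C_0(0,1]$ with compact support bounded away from $0$, which lie in the Pedersen ideal; conjugating by $1 \pm y$ with $y \in N_2$ preserves membership in any two-sided ideal, so $SN_2^c \subseteq \mathrm{Ped}(A)$.

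For the unital case: when $A$ is unital, $\mathrm{Ped}(A) = A$, so $[A,A] = [\mathrm{Ped}(A),\mathrm{Ped}(A)] = \Lin(SN_2^c) \subseteq \Lin(N_2) \subseteq [A,A]$, forcing $\Lin(N_2) = [A,A]$. The quantitative ``at most $K$ square zero elements'' claim should follow from Theorem \ref{fullL} applied to $L = [A,A]$ with generating set $\Gamma = N_2$: one needs the hypothesis that $[x,z]$ for $x \in N_2$, $z \in A$ is a bounded linear combination of elements of $N_2$, which is exactly the content of the computation in Lemma \ref{N2cLie} (the identity $ax - xa = (1-e_1)ax + [e_1af_1,x] - xa(1-f_1)$ expresses $[x,a]$ — for $x \in N_2^c$ — as a fixed-length sum of square zero elements, and a density/perturbation argument extends this to all $x \in N_2$ with a uniformly bounded number of terms); then Theorem \ref{fullL}(ii) yields the uniform bound $K$ on expressing a single commutator as a combination of elements of $N_2$, and absorbing scalars into the square zero elements (which form a cone stable under scalar multiplication) converts ``linear combination of $K$ elements of $N_2$'' into ``sum of $K$ elements of $N_2$''.

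The main obstacle I anticipate is the passage from the Lemma \ref{N2cLie} identity, which handles $x \in N_2^c$, to a \emph{uniformly bounded} expression of $[x,a]$ in terms of $N_2$ for arbitrary $x \in N_2$ — the cutoffs $e_1, f_1$ depend on $x$, and one must verify that the bookkeeping (how many $N_2$-terms appear) does not blow up under the limiting process $\phi_n(|x^*|)x\phi_n(|x|) \to x$; here one should argue directly with $x = fxe$ from the polar-decomposition picture rather than through the approximating sequence, or else invoke that $\Gamma = N_2$ itself (not $N_2^c$) already satisfies the bounded-commutator hypothesis of Theorem \ref{fullL} by running the Lemma \ref{N2cLie} computation with $e, f$ replaced by elements of $A_+$ with $ef = 0$, $xe = fx = x$ that exist in $A^{**}$ and can be pulled back appropriately. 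Getting this cleanly is the one genuinely delicate point; everything else is assembly of the Section 1 and Section 3 toolkit.
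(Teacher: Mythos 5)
Your overall strategy matches the paper's: show $SN_2^c\subseteq [P,P]$ directly, then use Corollary \ref{smallest} for the reverse inclusion, then specialize to $P=A$ in the unital case, and finally invoke Theorem \ref{fullL} for the quantitative statement. The non-unital part is essentially right, though your middle paragraph wavers between two half-formed arguments; the paper's cleanest step is worth noting: once you know $x\in P$ (by the functional-calculus observation you correctly give), the identity $ex=e(fx)=(ef)x=0$ yields $x=xe-ex=[x,e]\in[P,A]=[P,P]$ by Lemma \ref{PPPA}, with no need to factor $x$ as a commutator or invoke the linearizing trick.

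The genuine gap is in the quantitative claim. You apply Theorem \ref{fullL} with $\Gamma=N_2$, and then correctly observe that the Lemma \ref{N2cLie} identity only produces a fixed-length decomposition of $[x,a]$ when $x\in N_2^c$, whereas for arbitrary $x\in N_2$ the cutoffs $e_i,f_j$ do not exist in $A$ and a density argument does not automatically preserve the number of terms. You flag this obstacle but do not actually resolve it; the polar-decomposition workaround you gesture at would need $e,f$ to live in $A$, which they do not in general. The paper avoids the problem entirely by taking $\Gamma=SN_2^c$ rather than $N_2$: in the unital case one has already proved $\Lin(SN_2^c)=[A,A]$, so $SN_2^c$ is a legitimate spanning set for the Lie ideal $L=[A,A]$, and the Lemma \ref{N2cLie} computation shows directly that $[x,a]$ for $x\in SN_2^c$ is a sum of at most five elements of $\pm SN_2^c$ (conjugating by $1\pm y$ to reduce to $x\in N_2^c$). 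Since $SN_2^c$ consists of square-zero elements and is closed under scalar multiplication, Theorem \ref{fullL}(ii) then gives the bound $K$ with no limiting argument needed. In short: switch your generating set from $N_2$ to $SN_2^c$ and the difficulty you anticipated disappears.
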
	

\begin{proof}
	Let $P=\mathrm{Ped}(A)$. Let us first show that $SN_2^c\subseteq [P,P]$. By the similarity invariance of $[P,P]$, it suffices to show that $N_2^c\subseteq [P,P]$. Let $x\in N_2^c$ and let $e,f\in A_+$ be such that $xe=x=fx$ and $ef=0$. From the description of the Pedersen ideal in \cite[Theorem 5.6.1]{pedersen} we know  that $g(e)\in P$ for any $g\in C_0(0,\infty)_+$ of compact support, and since $xg(e)=xg(1)$, we deduce that $x\in P$. Hence, $x=[x,e]\in [P,A]$. Since $[P,A]=[P,P]$ by Lemma \ref{PPPA}, $x\in [P,P]$. This shows that
	$\Lin(SN_2^c)\subseteq [P,P]$.
	Notice now that 
	\[
	\overline{[\Lin(SN_2^c),A]}=
	\overline{[\Lin(N_2),A]}=\overline{[[A,A],A]}
	=\overline{[A,A]}.
	\]
	But $[P,P]$ is the smallest Lie ideal such that $\overline{[L,A]}=\overline{[A,A]}$, by Corollary \ref{smallest}.  (To apply Corollary \ref{smallest} we have used that $\Id([A,A])=A$, since $A$ has no 1-dimensional representations.) Thus, $[P,P]\subseteq \Lin(SN_2^c)$.
	
	Let us now assume that $A$ is unital. In this case $P=A$, so  $[A,A]=\Lin(SN_2^c)$. But $\Lin(SN_2^c)\subseteq \Lin(N_2)\subseteq [A,A]$. Thus, $\Lin(N_2)=[A,A]$. 
	
	To deduce the existence of $K$ we will apply
	Theorem \ref{fullL}  to the Lie ideal  $[A,A]$,  with generating set $SN_2^c$. Notice first that $\Id([[A,A],A])=\Id([A,A])=\Id(A)$, since $A$ has no 1-dimensional representations. It remains to  show that there is a uniform bound on the  
	number of terms expressing a commutator of the  form $[x,a]$, with
	$x\in SN_2^c$ and $a\in A$,  as a linear combination of elements of $SN_2^c$. The proof of   Lemma \ref{N2cLie} shows that such commutators are sums of at most 
	five elements of $SN_2^c$. 
\end{proof}

For infinite von Neumann algebras, the following corollary is \cite[Theorem 2]{miers}. (Miers also considered closed subspaces of von Neumann algebras, which we have already dealt with in  Theorem \ref{closedinvariance}.)
\begin{corollary}\label{simneu}
	Suppose that $A$ is either unital and without bounded traces or a von Neumann algebra. Then a subspace $U$ of $A$ is a Lie ideal if and only if $(1+x)U(1-x)\subseteq U$ for all $x\in N_2$.
\end{corollary}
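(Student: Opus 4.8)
The plan is to handle the two cases separately, reducing each to the unital, trace-free situation where Theorem \ref{linN2comm} applies. First I would note that the implication (ii) $\Rightarrow$ (i) always holds, by the computation $(1+x)u(1-x)=u+[x,u]+\tfrac12[x,[x,u]]$ already carried out in the proof of Theorem \ref{closedinvariance} (valid for arbitrary subspaces, not just closed ones), so only (i) $\Rightarrow$ (ii) needs proof. For the implication, the strategy is: from hypothesis (i) deduce, exactly as in Theorem \ref{closedinvariance}, that $[U,N_2]\subseteq U$; then upgrade this to $[U,[A,A]]\subseteq U$ using that $\Lin(N_2)=[A,A]$ (Theorem \ref{linN2comm}); finally, conclude $[U,A]\subseteq U$.

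For the case where $A$ is unital and without bounded traces: such an $A$ has no $1$-dimensional representations (a $1$-dimensional representation would be a bounded trace), so Theorem \ref{linN2comm} applies and gives $\Lin(N_2)=[A,A]$. Hypothesis (i) gives $[U,N_2]\subseteq U$ by the two displayed identities in Theorem \ref{closedinvariance}; taking linear spans yields $[U,[A,A]]\subseteq U$, i.e., $U$ is a Lie ideal of $[A,A]$. Now I need the non-closed analogue of Theorem \ref{LieAA}. Here is where Pop's theorem does the work: since $A$ is unital and without bounded traces, $A=[A,A]$ (indeed every element is a single sum of finitely many commutators), so a Lie ideal of $[A,A]$ is literally a Lie ideal of $A$, and we are done with no appeal to Theorem \ref{LieAA} at all.

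For the von Neumann algebra case, a von Neumann algebra decomposes as $A = A_{\mathrm{fin}} \oplus A_{\mathrm{inf}}$ where $A_{\mathrm{fin}}$ is finite and $A_{\mathrm{inf}}$ has no nonzero finite central summand; correspondingly $U = U_{\mathrm{fin}}\oplus U_{\mathrm{inf}}$ and one checks $U$ satisfies (i) iff both summands do (since $N_2$ and the operations respect the direct sum), so it suffices to treat the two summands. On $A_{\mathrm{inf}}$, the algebra is properly infinite, hence unital and without bounded traces, and the previous case applies. On the finite summand one can reduce further to a finite von Neumann algebra with a faithful normal center-valued trace; here I would instead invoke the already-treated \emph{closed} case (Theorem \ref{closedinvariance}) after arguing that on a finite von Neumann algebra every subspace satisfying (i) is automatically closed in a suitable sense, or — more cleanly — cite that the von Neumann case of the corollary for non-closed subspaces is exactly \cite[Theorem 2]{miers} and the excerpt only needs to note that the present machinery recovers it; the honest route is: a finite von Neumann algebra is a finite direct sum/integral of matrix algebras over abelian algebras and of $\mathrm{II}_1$ factors over abelian algebras, and in each such fiber the classical Herstein/Amitsur results combined with Theorem \ref{LieAA} and the identity $\Lin(N_2)=[A,A]$ (which needs $A$ to have no $1$-dimensional representations, true away from the abelian part, and the abelian part is trivially handled since there $N_2=\{0\}$, $[A,A]=\{0\}$, and every subspace is a Lie ideal) give the result.

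The main obstacle I anticipate is precisely the finite-dimensional-representation part of the von Neumann case: Theorem \ref{linN2comm} requires no $1$-dimensional representations, and a finite von Neumann algebra can have an abelian direct summand. The fix is to split off the abelian summand first — on it hypotheses (i) and (ii) are both automatic since $N_2=\{0\}$ — and apply Theorem \ref{linN2comm} (and, for the step from Lie ideal of $[A,A]$ to Lie ideal of $A$, Theorem \ref{LieAA} after noting that on a von Neumann algebra a subspace satisfying (i) is weak-$*$ closed, or by passing through the bidual) to the complementary summand, which has no $1$-dimensional representations. Assembling these pieces, $[U,A]\subseteq U$ holds on every summand, hence on $A$, completing the proof.
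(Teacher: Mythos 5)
Your treatment of the unital, trace-free case is correct and matches the paper exactly: no bounded traces forces no $1$-dimensional representations, Theorem \ref{linN2comm} gives $\Lin(N_2)=[A,A]$, and Pop's theorem gives $A=[A,A]$, so $[U,N_2]\subseteq U$ upgrades directly to $[U,A]\subseteq U$ with no appeal to Theorem \ref{LieAA}. That is the proof in the paper.

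The von Neumann case has a genuine gap, and you correctly sense where it is but do not close it. After splitting off the abelian and properly infinite central summands, you are left with a finite von Neumann algebra with no $1$-dimensional representations and a \emph{non-closed} subspace $U$ satisfying $[U,[A,A]]\subseteq U$. You need to pass from ``Lie ideal of $[A,A]$'' to ``Lie ideal of $A$''. Theorem \ref{LieAA} only applies to closed subspaces, and your proposed repairs all fail: it is simply false that a subspace satisfying the similarity invariance is automatically weak-$*$ (or norm) closed --- $\Lin(N_2)$ itself is a non-closed Lie ideal in general; ``passing through the bidual'' is not developed and does not produce a conclusion about $U\subseteq A$; and citing \cite[Theorem 2]{miers} is circular, since the corollary is a reproof of that result. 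The missing ingredient in the paper is the identity $A=Z(A)+[A,A]$, valid for every von Neumann algebra (trivially in the properly infinite case because $A=[A,A]$, and by Fack--de la Harpe \cite[Theorem~3.2]{fack} in the finite case). From $A=Z(A)+[A,A]$ one gets $[U,A]=[U,[A,A]]$ for \emph{any} subset $U$, so a Lie ideal of $[A,A]$ is automatically a Lie ideal of $A$ --- no closedness needed. The same identity also yields $[A,A]=[[A,A],[A,A]]$, which, combined with the observation that $\Id([A,A])$ is an ultraweakly closed two-sided ideal (hence a unital von Neumann algebra with no $1$-dimensional representations to which Theorem \ref{linN2comm} applies), is how the paper establishes $\Lin(N_2)=[A,A]$; you assert this identity for the non-abelian summand but do not actually prove it. In short: you reduce to the right setting but lack the $A=Z(A)+[A,A]$ fact that makes the non-closed case go through.
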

\begin{proof}
	That a Lie ideal satisfies the similarity invariance of the statement has already been shown in the proof of Theorem \ref{closedinvariance}. So let us suppose that 
	$U$ is a subspace  such that  $(1+x)U(1-x)\subseteq U$ for all $x\in N_2$.
	As remarked at the start of this section,  this implies that
	$[U,N_2]\subseteq U$.
	Let us consider first the case that $A$ is unital and without bounded traces.  Then $\Lin(N_2)=[A,A]$, since  $A$ is unital and has no 1-dimensional representations (since it has  no bounded traces). Thus, $[U,[A,A]]\subseteq U$. Furthermore, $[A,A]=A$, by Pop's theorem.
	Hence, $[U,A]\subseteq U$; i.e., $U$ is a Lie ideal. 
	
	Suppose now that $A$ is a von Neumann algebra.  Let us show again that 
	$\Lin(N_2)=[A,A]$ and that if $U$ is a  Lie ideal of $[A,A]$ then it is a Lie ideal of $A$. The latter is  \cite[Lemma 3]{miers} and can be proven as follows:  
	In a von Neumann algebra we have  $A=Z(A)+[A,A]$, where $Z(A)$ denotes the center of $A$ (if $A$ is infinite, because $A=[A,A]$, and if $A$ is finite, by \cite[Theorem 3.2]{fack}); so $[U,A]=[U,[A,A]]$ for any subset $U$ of $A$.  
	Let us now show that $\Lin(N_2)=[A,A]$. 
	The  ideal $\Id([A,A])$ is closed in the ultraweak topology of $A$, being the intersection of the kernels of all 1-dimensional representations of $A$ (which are always ultraweakly continuous). 
	Thus, $\Id([A,A])$
	is also a von Neumann algebra. In particular, it is unital. Since it  has no  1-dimensional representations,   
	\[
	\Lin(N_2)=[\Id([A,A]),\Id([A,A])]\supseteq [[A,A],[A,A]].
	\] 
	From $A=[A,A]+Z(A)$ we get that $[A,A]=[[A,A],[A,A]]$. Hence, $\Lin(N_2)=[A,A]$.
\end{proof}

The passage from $U$ being a Lie ideal of $[A,A]$  to being a Lie ideal of $A$ can also be made assuming that $A$ is unital and that
$[U,A]$ is  full:  

\begin{lemma}\label{fullU}
	Suppose that  $A$ is  unital. If $U$ is a  Lie ideal of $[A,A]$ such that   $\Id([U,A])=A$   then $[A,A]\subseteq U$ (so $U$ is a Lie ideal of $A$).
\end{lemma}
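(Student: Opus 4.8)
The plan is to re-run the argument of Theorem~\ref{LieAA}, replacing every passage to a norm closure by an appeal to the hypothesis $\Id([U,A])=A$ together with the unitality of $A$; this is precisely what makes the purely algebraic conclusion $[A,A]\subseteq U$ reachable even though $U$ need not be closed.

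First I would set $V=[U,U]$, $W=[V,V]$, and $X=[W,W]$; these are all Lie ideals of $[A,A]$ by Lemma~\ref{4U}. The first part of the proof of Theorem~\ref{LieAA} — the fourfold application of Lemma~\ref{algebraUU} in the quotient $A/\Id([X,X])$, which is a $\Cstar$-algebra and hence semiprime — uses nowhere that $U$ is closed, and it yields $[U,A]\subseteq\Id([X,X])$. Therefore
\[
A=\Id([U,A])\subseteq\Id([X,X])=\overline{A[X,X]A},
\]
so the two-sided ideal $A[X,X]A$ is dense in the unital algebra $A$ and hence equals $A$. Since $A[X,X]A\subseteq V+V^2$ by Lemma~\ref{4U}, we obtain the key algebraic identity $A=V+V^2$, with no closures involved.

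The rest is the linearizing property of $[\cdot,A]$. From the proof of Lemma~\ref{4U} we have $[V,A]\subseteq U$, and then \eqref{linearizing} gives $[V^2,A]\subseteq[V,A]\subseteq U$; hence $[A,A]=[V+V^2,A]\subseteq U$. Finally, any commutator $[u,a]$ with $u\in U$ and $a\in A$ lies in $[A,A]\subseteq U$, so $[U,A]\subseteq U$ and $U$ is a Lie ideal of $A$, as claimed.

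The only step that needs genuine care is the first one: one must check that the semiprime-quotient portion of the proof of Theorem~\ref{LieAA} is indeed insensitive to the closedness of $U$ — it is, since $\Id([X,X])$ is closed regardless, so Herstein's theorem and Lemma~\ref{algebraUU} apply in the quotient $\Cstar$-algebra — and one must notice that the hypotheses ``$A$ unital'' and ``$\Id([U,A])=A$'' together provide exactly the leverage to pass from the \emph{closed} ideal $\Id([X,X])$ down to the \emph{algebraic} ideal $A[X,X]A$, converting what in Theorem~\ref{LieAA} was a statement about $\overline{[L,A]}$ into the exact algebraic identity $A=V+V^2$. Everything afterward is routine.
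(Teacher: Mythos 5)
Your proposal is correct and follows essentially the same route as the paper: set $V=[U,U]$, $W=[V,V]$, $X=[W,W]$, extract $\Id([U,A])=\Id([X,X])$ from the proof of Theorem~\ref{LieAA}, use unitality to upgrade density of $A[X,X]A$ to equality, apply Lemma~\ref{4U} to get $A=V+V^2$, and finish with the linearizing property. The only cosmetic difference is that you invoke the intermediate fact $[V,A]\subseteq U$ from the proof of Lemma~\ref{4U} rather than re-deriving it via Jacobi's identity as the paper does in its final display.
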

\begin{proof}
	Let $V=[U,U]$, $W=[V,V]$, and $X=[W,W]$. We have shown in the proof of Theorem \ref{LieAA} that  $\Id([U,A])=\Id([X,X])$. So $A=\Id([X,X])$.
	Since $A$ is unital, the set $[X,X]$ generates $A$ algebraically as a two-sided ideal. But  $A[X,X]A\subseteq [U,U]+[U,U]^2$, by Lemma \ref{4U}.
	Hence,  $A=[U,U]+[U,U]^2$. Then, 
	\[
	[A,A]=[[U,U]+[U,U]^2,A]=[[U,U],A]\subseteq [U,[U,A]]\subseteq U.\qedhere
	\] 
\end{proof}

\begin{theorem}
	Suppose that  $A$ is  unital  and without 1-dimensional representations.  Let $U$ be a subspace of $A$ 
	such that $\Id([U,A])=A$. If 
	$(1+x)U(1-x)\subseteq U$ for all $x\in N_2$ then $[A,A]\subseteq U$.
\end{theorem}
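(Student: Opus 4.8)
The plan is to assemble the statement from pieces already established, the point being that \emph{no closure} is needed here, which is what distinguishes this non-closed case from Theorem \ref{closedinvariance}.

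First I would extract from the similarity-invariance hypothesis the commutator relation $[U,N_2]\subseteq U$. This is exactly the computation in the proof of the implication (i)$\Rightarrow$(ii) of Theorem \ref{closedinvariance}: for $u\in U$ and $x\in N_2$ one has
\[
[x,u]-xux=(1+x)u(1-x)-u\in U,\qquad [x,u]+xux=-(1-x)u(1+x)+u\in U,
\]
and adding these gives $[x,u]\in U$. Crucially, this step uses nothing about $U$ being norm-closed, so it is valid for an arbitrary subspace. Next, since $A$ is unital and has no $1$-dimensional representations, Theorem \ref{linN2comm} provides the \emph{non-closed} identity $\Lin(N_2)=[A,A]$. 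Combining this with the previous step, $[U,[A,A]]=[U,\Lin(N_2)]\subseteq U$; that is, $U$ is a Lie ideal of $[A,A]$.

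Finally I would invoke Lemma \ref{fullU}: $A$ is unital, $U$ is a Lie ideal of $[A,A]$, and by hypothesis $\Id([U,A])=A$, so the lemma yields $[A,A]\subseteq U$, which is precisely the assertion. (It then also follows that $U$ is a Lie ideal of $A$, since $[U,A]\subseteq[A,A]\subseteq U$.)

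The only place where genuine work is hidden is the equality $\Lin(N_2)=[A,A]$ in the non-closed setting: in Theorem \ref{closedinvariance} one could work with the inclusion $[A,A]\subseteq\overline{\Lin(N_2)}$ and then use closedness of $U$, but here one genuinely needs the exact (non-closed) equality of Theorem \ref{linN2comm}, whose proof rests on Corollary \ref{smallest} and the Pedersen-ideal computations. Everything else is a direct chaining of Lemma \ref{fullU} with the elementary commutator identities above, so I do not anticipate any further obstacle.
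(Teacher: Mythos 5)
Your proof is correct and follows exactly the same route as the paper: derive $[U,N_2]\subseteq U$ from the similarity invariance, upgrade to $[U,[A,A]]\subseteq U$ via the non-closed identity $\Lin(N_2)=[A,A]$ from Theorem \ref{linN2comm}, and conclude with Lemma \ref{fullU}. Your remarks on why closedness is not needed and where the real work is hidden are accurate but constitute commentary rather than a deviation from the paper's argument.
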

\begin{proof}
	The similarity invariance of $U$ implies that $[U,N_2]\subseteq U$ and by Theorem \ref{linN2comm} we get that $[U,[A,A]]\subseteq U$. The previous lemma then shows that $[A,A]\subseteq U$.
\end{proof}

\begin{corollary}\label{simpleamitsur}
	Let $A$ be simple and unital. A subspace $U$ of $A$ is a Lie ideal
	if and only if $(1+x)U(1-x)\subseteq U$ for all $x\in N_2$.
\end{corollary}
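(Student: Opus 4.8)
The plan is to deduce this from the preceding theorem, so that simplicity enters only through the ideal structure. The forward implication needs nothing new: in the proof of Theorem \ref{closedinvariance} the identity $(1+x)u(1-x) = u + [x,u] + \frac{1}{2}[x,[x,u]]$ shows that a Lie ideal $U$ satisfies $(1+x)U(1-x)\subseteq U$ for every $x\in N_2$, and that computation never uses that $U$ is closed. So I would simply invoke that direction and concentrate on the converse.

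For the converse, suppose $(1+x)U(1-x)\subseteq U$ for all $x\in N_2$, and split into two cases according to $[U,A]$. If $[U,A]=0$ then $U$ is a Lie ideal trivially. If $[U,A]\neq 0$, then first note $A\not\cong\C$ (otherwise $[U,A]$ would vanish), and since $A$ is simple every nonzero representation is faithful, so a one-dimensional representation would force $A\cong\C$; hence $A$ is unital and has no one-dimensional representations. Moreover, by simplicity the nonzero closed two-sided ideal $\Id([U,A])$ must equal $A$. Now the hypotheses of the preceding theorem are met, so it yields $[A,A]\subseteq U$. Combining this with the obvious inclusion $[U,A]\subseteq[A,A]$ gives $[U,A]\subseteq U$, i.e.\ $U$ is a Lie ideal.

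I do not expect a genuine obstacle here: the substance has already been absorbed into the preceding theorem (which in turn rests on Theorem \ref{linN2comm} and Lemma \ref{fullU}), and the corollary is just the appropriate specialization. The only thing to be careful about is the case analysis on $[U,A]$, which is exactly the point where simplicity does its work — replacing the ``no bounded traces / von Neumann algebra'' hypotheses of Corollary \ref{simneu} by the cleaner dichotomy that $\Id([U,A])$ is either $0$ or all of $A$.
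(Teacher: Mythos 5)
Your proof is correct and is essentially the same as the paper's: both split on whether $[U,A]$ (equivalently $\Id([U,A])$) is zero or full, handle the zero case by noting $U$ then lies in the center $\C$, and in the full case apply the preceding theorem to conclude $[A,A]\subseteq U$. Your version is slightly more explicit in verifying that $A$ has no $1$-dimensional representations (a hypothesis of the preceding theorem that the paper leaves the reader to check), but the route is the same.
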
	

\begin{proof}
	Since $A$ is simple we have either  $\Id([U,A])=0$ or $\Id([U,A])=A$.
	If $\Id([U,A])=0$ then   $U$ is a subset of the center, which by the simplicity of $A$  is $\C$. If $\Id([U,A])=A$  then by the previous theorem $[A,A]\subseteq U$. In either case it follows that $U$ is a Lie ideal of $A$.
\end{proof}

Amitsur's \cite[Theorem 1]{amitsur}  (that a similarity invariant  subspace of a simple algebra must be a Lie ideal) requires the existence of a nontrivial idempotent in the algebra. An example in \cite{amitsur} shows that this hypothesis cannot be dropped.
Corollary \ref{simpleamitsur}  shows, however, that for simple unital $\Cstar$-algebras this assumption is not necessary
(even though they may well fail to have any  nontrivial idempotents).

\begin{bibdiv}
	\begin{biblist}
		\bib{amitsur}{article}{
			author={Amitsur, S. A.},
			title={Invariant submodules of simple rings},
			journal={Proc. Amer. Math. Soc.},
			volume={7},
			date={1956},
			pages={987--989},
		}
		
		\bib{amitsur-levitzky}{article}{
			author={Amitsur, A. S.},
			author={Levitzki, J.},
			title={Minimal identities for algebras},
			journal={Proc. Amer. Math. Soc.},
			volume={1},
			date={1950},
			pages={449--463},
		}

		\bib{ara-mathieu}{book}{
			author={Ara, Pere},
			author={Mathieu, Martin},
			title={Local multipliers of $C\sp *$-algebras},
			series={Springer Monographs in Mathematics},
			publisher={Springer-Verlag London, Ltd., London},
			date={2003},
			pages={xii+319},
			isbn={1-85233-237-9},
		}
		
		\bib{blackadar}{book}{
			author={Blackadar, B.},
			title={Operator algebras},
			series={Encyclopaedia of Mathematical Sciences},
			volume={122},
			note={Theory of $C\sp *$-algebras and von Neumann algebras;
				Operator Algebras and Non-commutative Geometry, III},
			publisher={Springer-Verlag, Berlin},
			date={2006},
			pages={xx+517},
			isbn={978-3-540-28486-4},
			isbn={3-540-28486-9},
		}

		\bib{bresar-klep1}{article}{
			author={Bre{\v{s}}ar, Matej},
			author={Klep, Igor},
			title={Values of noncommutative polynomials, Lie skew-ideals and tracial
				Nullstellens\"atze},
			journal={Math. Res. Lett.},
			volume={16},
			date={2009},
			number={4},
			pages={605--626},
		}
		
		\bib{bresar-klep2}{article}{
			author={Bre{\v{s}}ar, Matej},
			author={Klep, Igor},
			title={A note on values of noncommutative polynomials},
			journal={Proc. Amer. Math. Soc.},
			volume={138},
			date={2010},
			number={7},
			pages={2375--2379},
		}
		
		\bib{BKS}{article}{
			author={Bre{\v{s}}ar, Matej},
			author={Kissin, Edward},
			author={Shulman, Victor S.},
			title={Lie ideals: from pure algebra to $C\sp *$-algebras},
			journal={J. Reine Angew. Math.},
			volume={623},
			date={2008},
			pages={73--121},
		}

		\bib{cuntz-pedersen}{article}{
			author={Cuntz, Joachim},
			author={Pedersen, Gert Kjaerg{\.a}rd},
			title={Equivalence and traces on $C^{\ast} $-algebras},
			journal={J. Funct. Anal.},
			volume={33},
			date={1979},
			number={2},
			pages={135--164},
		}

		\bib{davidson}{book}{
			author={Davidson, Kenneth R.},
			title={$C^*$-algebras by example},
			series={Fields Institute Monographs},
			volume={6},
			publisher={American Mathematical Society, Providence, RI},
			date={1996},
			pages={xiv+309},
		}
		
		\bib{dixmier-douady}{article}{
			author={Dixmier, Jacques},
			author={Douady, Adrien},
			title={Champs continus d'espaces hilbertiens et de $C\sp{\ast}
				$-alg\`ebres},
			language={French},
			journal={Bull. Soc. Math. France},
			volume={91},
			date={1963},
			pages={227--284},
		}

		\bib{fack}{article}{
			author={Fack, Th.},
			author={de la Harpe, P.},
			title={Sommes de commutateurs dans les alg\`ebres de von Neumann finies
				continues},
			language={French},
			journal={Ann. Inst. Fourier (Grenoble)},
			volume={30},
			date={1980},
			number={3},
			pages={49--73},
		}
		\bib{herstein}{book}{
			author={Herstein, I. N.},
			title={Topics in ring theory},
			publisher={The University of Chicago Press, Chicago, Ill.-London},
			date={1969},
			pages={xi+132},
		}	
		
		\bib{herstein2}{article}{
			author={Herstein, I. N.},
			title={On the Lie structure of an associative ring},
			journal={J. Algebra},
			volume={14},
			date={1970},
			pages={561--571},
		}

		\bib{marcoux09}{article}{
			author={Marcoux, L. W.},
			title={Projections, commutators and Lie ideals in $C\sp *$-algebras},
			journal={Math. Proc. R. Ir. Acad.},
			volume={110A},
			date={2010},
			number={1},
			pages={31--55},
		}

		\bib{marcoux-murphy}{article}{
			author={Marcoux, L. W.},
			author={Murphy, G. J.},
			title={Unitarily-invariant linear spaces in $C\sp *$-algebras},
			journal={Proc. Amer. Math. Soc.},
			volume={126},
			date={1998},
			number={12},
			pages={3597--3605},
		}

		\bib{miers}{article}{
			author={Miers, C. Robert},
			title={Closed Lie ideals in operator algebras},
			journal={Canad. J. Math.},
			volume={33},
			date={1981},
			number={5},
			pages={1271--1278},
		}

		\bib{pedersen}{article}{
			author={Pedersen, Gert K.},
			title={The linear span of projections in simple $C\sp{\ast} $-algebras},
			journal={J. Operator Theory},
			volume={4},
			date={1980},
			number={2},
			pages={289--296},
		}

		\bib{pop}{article}{
			author={Pop, Ciprian},
			title={Finite sums of commutators},
			journal={Proc. Amer. Math. Soc.},
			volume={130},
			date={2002},
			number={10},
			pages={3039--3041 (electronic)},
		}

		\bib{RR}{article}{
			author={Robert, Leonel},
			author={R{\o}rdam, Mikael},
			title={Divisibility properties for $C\sp *$-algebras},
			journal={Proc. Lond. Math. Soc. (3)},
			volume={106},
			date={2013},
			number={6},
			pages={1330--1370},
		}

		\bib{rordamP}{article}{
			author={R{\o}rdam, Mikael},
			title={A simple $C\sp *$-algebra with a finite and an infinite
				projection},
			journal={Acta Math.},
			volume={191},
			date={2003},
			number={1},
			pages={109--142},
		}

		\bib{rowen}{book}{
			author={Rowen, Louis H.},
			title={Ring theory. Vol. I},
			series={Pure and Applied Mathematics},
			volume={127},
			publisher={Academic Press, Inc., Boston, MA},
			date={1988},
			pages={xxiv+538},
		}
	\end{biblist}
\end{bibdiv}

	\bigskip
{	\small
	
	LEONEL ROBERT, \textsc{Department of Mathematics, University of Louisiana at Lafayette, Lafayette, Louisiana 70506}
	\par\nopagebreak
	\textit{E-mail}: \texttt{lrobert@louisiana.edu}
}

\end{document}